\documentclass[11pt]{amsart}

\usepackage{subfiles}
\usepackage{euscript}
\usepackage{tabu}
\usepackage[margin=1in]{geometry} 		             		
\usepackage{graphicx}
\usepackage{faktor}
\usepackage{xcolor}
\usepackage{amsmath,amsthm,amssymb}
\usepackage{mathabx}
\usepackage{mathrsfs}
\usepackage{tikz}
\usepackage{tikz-cd}
\usepackage{accents}
\usepackage{upgreek}
\usepackage{enumitem}
\usepackage{bm}
\usepackage{mathtools}
\usepackage[all]{xy}
\usepackage{caption}

\usepackage{url}
\usepackage{float}
\usepackage{todonotes}
\usepackage{bbm}
\usepackage{colonequals}
\usepackage{longtable}

\usepackage[full]{textcomp}
\usepackage[cal=cm]{mathalfa}
\usepackage{xparse}
\usepackage{comment}
\usepackage{cite}

\tikzset{
  commutative diagrams/.cd, 
  arrow style=tikz, 
  diagrams={>=stealth}
}

\theoremstyle{definition}

\makeatletter
\def\@tocline#1#2#3#4#5#6#7{\relax
  \ifnum #1>\c@tocdepth 
  \else
    \par \addpenalty\@secpenalty\addvspace{#2}%
    \begingroup \hyphenpenalty\@M
    \@ifempty{#4}{%
      \@tempdima\csname r@tocindent\number#1\endcsname\relax
    }{%
      \@tempdima#4\relax
    }%
    \parindent\z@ \leftskip#3\relax \advance\leftskip\@tempdima\relax
    \rightskip\@pnumwidth plus4em \parfillskip-\@pnumwidth
    #5\leavevmode\hskip-\@tempdima
      \ifcase #1
       \or\or \hskip 1em \or \hskip 2em \else \hskip 3em \fi%
      #6\nobreak\relax
    \dotfill\hbox to\@pnumwidth{\@tocpagenum{#7}}\par
    \nobreak
    \endgroup
  \fi}
\makeatother

\makeatletter
\DeclareRobustCommand{\cev}[1]{%
  \mathpalette\do@cev{#1}%
}
\newcommand{\do@cev}[2]{%
  \fix@cev{#1}{+}%
  \reflectbox{$\m@th#1\vec{\reflectbox{$\fix@cev{#1}{-}\m@th#1#2\fix@cev{#1}{+}$}}$}%
  \fix@cev{#1}{-}%
}
\newcommand{\fix@cev}[2]{%
  \ifx#1\displaystyle
    \mkern#23mu
  \else
    \ifx#1\textstyle
      \mkern#23mu
    \else
      \ifx#1\scriptstyle
        \mkern#22mu
      \else
        \mkern#22mu
      \fi
    \fi
  \fi
}

\makeatother

\usetikzlibrary{calc}
\usetikzlibrary{fadings}
\usetikzlibrary{decorations.pathmorphing}
\usetikzlibrary{decorations.pathreplacing}
\usetikzlibrary{shapes}

\newcommand{\CoreLevelFigure}{%
\begin{tikzpicture}[line width=.8pt,line cap=round,
 every node/.style={font=\small}]
\path[use as bounding box] (-3,-1.65) rectangle (3,1.7);
\draw (0,0) ellipse (1.12 and .72);
\draw (-.48,.13) .. controls (-.26,-.16) and (.26,-.16) .. (.48,.13);
\draw (-.37,.02) .. controls (-.16,.19) and (.16,.19) .. (.37,.02);
\node at (0,-.40) {$E$};
\draw[domain=-1:1,samples=50] plot ({-1.12-1.5*\x*\x},\x);
\draw[domain=-1:1,samples=50] plot ({1.12+1.5*\x*\x},\x);
\fill ({-1.12-1.5*.7*.7},.7) circle (1.5pt) node[above left] {$1$};
\fill ({-1.12-1.5*.7*.7},-.7) circle (1.5pt) node[below left] {$2$};
\fill ({1.12+1.5*.7*.7},.7) circle (1.5pt) node[above right] {$3$};
\fill ({1.12+1.5*.7*.7},-.7) circle (1.5pt) node[below right] {$4$};
\fill (0,.72) circle (1.5pt) node[above=3pt] {$5$};
\draw[fill=white] (-1.12,0) circle (2.2pt);
\draw[fill=white] (1.12,0) circle (2.2pt);
\node at (-2.1,0) {$R_1$};
\node at (2.1,0) {$R_2$};
\node at (0,1.48) {Core level $P$};
\node at (0,-1.48) {$P=\{\{1,2\},\{3,4\},\{5\}\}$};
\end{tikzpicture}%
}
\newcommand{\SingularityLevelFigure}{%
\begin{tikzpicture}[line width=.8pt,line cap=round,
 every node/.style={font=\small}]
\path[use as bounding box] (-3,-1.65) rectangle (3,1.7);
\draw (-2.4,-.85) -- (2.4,.85);
\draw (-2.4,.85) -- (2.4,-.85);
\draw (0,-1.05) -- (0,1.05);
\fill (-1.7,{-.85*1.7/2.4}) circle (1.5pt) node[below=3pt] {$1$};
\fill (1.7,{.85*1.7/2.4}) circle (1.5pt) node[above=3pt] {$2$};
\fill (-1.7,{.85*1.7/2.4}) circle (1.5pt) node[above=3pt] {$3$};
\fill (1.7,{-.85*1.7/2.4}) circle (1.5pt) node[below=3pt] {$4$};
\fill (0,.85) circle (1.5pt) node[right=4pt] {$5$};
\fill (-.055,-.055) rectangle (.055,.055);
\node at (.62,0) {$q$};
\node[right] at (2.4,.85) {$R_1$};
\node[right] at (2.4,-.85) {$R_2$};
\node[left] at (0,1.05) {$R_3$};
\node at (0,1.48) {Singularity level $P$};
\node at (0,-1.48) {$q$: elliptic triple point};
\end{tikzpicture}%
}
\newcommand{\BothLevelsFigure}{%
\begin{tikzpicture}[line width=.8pt,line cap=round,line join=round,
 every node/.style={font=\small}]
\path[use as bounding box] (-3,-2.2) rectangle (3,2.05);
\draw (-2.1,{.35-2.1/1.65}) -- (1.55,{.35+1.55/1.65});
\draw (-1.55,{.35+1.55/1.65}) -- (2.1,{.35-2.1/1.65});
\draw (0,-.9) -- (0,1.55);
\node[right] at (1.55,{.35+1.55/1.65}) {$E_1$};
\node[left] at (-1.55,{.35+1.55/1.65}) {$E_2$};
\node[left] at (0,1.55) {$E_3$};
\draw[domain=-1:.8,samples=50] plot ({-1.65-.5*\x*\x},{\x-.65});
\draw[domain=-1:.8,samples=50] plot ({1.65+.5*\x*\x},{\x-.65});
\draw[fill=white] (-1.65,-.65) circle (2.2pt);
\draw[fill=white] (1.65,-.65) circle (2.2pt);
\fill ({-1.65-.5*.60*.60},{.60-.65}) circle (1.5pt) node[left=3pt] {$1$};
\fill ({-1.65-.5*.75*.75},{-.75-.65}) circle (1.5pt) node[left=3pt] {$2$};
\fill ({1.65+.5*.60*.60},{.60-.65}) circle (1.5pt) node[right=3pt] {$3$};
\fill ({1.65+.5*.75*.75},{-.75-.65}) circle (1.5pt) node[right=3pt] {$4$};
\fill (0,1.13) circle (1.5pt) node[right=4pt] {$5$};
\fill (-.055,.295) rectangle (.055,.405);
\node at (.50,.35) {$q$};
\node[left] at (-2.2,-.65) {$R_1$};
\node[right] at (2.2,-.65) {$R_2$};
\node at (0,1.94) {Core level = singularity level = $P$};
\node at (0,-2.02) {$P=\{\{1,2\},\{3,4\},\{5\}\}$};
\end{tikzpicture}%
}

\newcounter{marginnote}
\DeclareMathAlphabet{\mathpzc}{OT1}{pzc}{m}{it}

\definecolor{sebdarkgreen}{rgb}{0.019,0.317,0.149}
\definecolor{sebgreen}{RGB}{36,157,55}
\definecolor{seblightgreen}{rgb}{0.784,0.952,0.780}
\definecolor{sebblue}{RGB}{0,123,194}
\definecolor{seblightblue}{RGB}{194,242,255}

\usepackage[pagebackref]{hyperref}
\hypersetup{
  colorlinks   = true,          
  urlcolor     = blue,          
  linkcolor    = purple,          
  citecolor   = sebblue             
}
\usepackage{cleveref}
\theoremstyle{theorem}
\newtheorem{theorem}{Theorem}[section]
\newtheorem*{theorem*}{Theorem}

\newtheorem*{conjecture*}{Conjecture}
\newtheorem{corollary}[theorem]{Corollary}
\newtheorem{lemma}[theorem]{Lemma}
\newtheorem{proposition}[theorem]{Proposition}
\theoremstyle{remark}
\newtheorem{remark}[theorem]{Remark}
\theoremstyle{definition}

\newtheorem*{runningexample*}{Running example}

\newtheorem*{aside*}{Aside}

\newtheorem{definition}[theorem]{Definition}

\newtheorem{notation}[theorem]{Notation}
\newtheorem{proposition-definition}[theorem]{Definition-Proposition}
\newtheorem{question}[theorem]{Question}

  \theoremstyle{theorem}

  \theoremstyle{theorem}

\DeclareMathOperator{\Pic}{Pic}

\DeclareMathOperator{\core}{core}

\DeclareMathOperator{\sing}{sing}

\newcommand{\Tail}[1]{\mathbf{Core}_{#1}}
\newcommand{\oTail}[1]{\mathbf{Core}_{#1}}
\newcommand{\cTail}[1]{\overline{\mathbf{Core}}_{#1}}

\newcommand{\Ell}[1]{\mathbf{Sing}_{#1}}
\newcommand{\oEll}[1]{\mathbf{Sing}_{#1}}
\newcommand{\cEll}[1]{\overline{\mathbf{Sing}}_{#1}}

\newcommand{\Min}[1]{\mathbf{Min}_{#1}}

\newcommand{\Atom}[1]{\mathbf{Atom}_{#1}}

\newcommand{\RR}{\mathbb{R}}

\newcommand{\Ycal}{\mathcal{Y}}

\newcommand{\Gm}{\mathbb{G}_{\rm{m}}}

\newcommand{\bcd}{\begin{center}\begin{tikzcd}}
\newcommand{\ecd}{\end{tikzcd}\end{center}}

\newcommand{\Aaff}{\mathbb{A}}

\newcommand{\PP}{\mathbb{P}}
\newcommand{\OO}{\mathcal{O}}

\newcommand{\ZZ}{\mathbb{Z}}

\newcommand{\dvr}{\Delta}

\newcommand{\Lcal}{\mathcal{L}}
\newcommand{\Ocal}{\mathcal{O}}
\newcommand{\Mcal}{\mathcal{M}}
\newcommand{\Pcal}{\mathcal{P}}

\newcommand{\Hcal}{\mathcal{H}}

\newcommand{\Dcal}{\mathcal{D}}
\newcommand{\Xcal}{\mathcal{X}}

\newcommand{\Gcal}{\mathcal{G}}
\newcommand{\Ccal}{\mathcal{C}}
\newcommand{\Ucal}{\mathcal{U}}

\newcommand{\Mrad}{\oM_{1,n}^{\text{rad}}}

\newcommand{\Gun}{\Gcal_{1,n}}

\newcommand{\calO}{\mathcal{O}}

\newcommand{\Spec}{\operatorname{Spec}}

\NewDocumentCommand{\compatibilitydatum}{m m m m m m O{} O{} O{}}{
\begin{equation*} \begin{tikzcd}[ampersand replacement=\&]
  \: \arrow{r} \& {#1} \arrow{r} \arrow{d}{#7} \& {#2} \arrow{r} \arrow{d}{#8} \& {#3} \arrow{r}{[1]} \arrow{d}{#9} \& \: \\
  \: \arrow{r} \& {#4} \arrow{r} \& {#5} \arrow{r} \& {#6} \arrow{r} \& \:
\end{tikzcd} \end{equation*}}

\NewDocumentCommand{\commutingsquare}{m m m m o O{} O{} O{} O{}}{
\begin{equation}\begin{tikzcd}[ampersand replacement=\&] \label{#5}
  #1 \arrow{r}{#6} \arrow{d}{#7} \& #2 \arrow{d}{#8} \\
  #3 \arrow{r}{#9} \& #4
\end{tikzcd}\IfValueTF{#5}{\label{#5}}{} \end{equation}}

\NewDocumentCommand{\cartesiansquare}{m m m m O{} O{} O{} O{}}{
\begin{equation*}\begin{tikzcd}[ampersand replacement=\&]
  #1 \arrow{r}{#5} \arrow{d}{#6} \arrow[dr, phantom, "\square"] \& #2 \arrow{d}{#7} \\
  #3 \arrow{r}{#8} \& #4
\end{tikzcd} \end{equation*}}

\NewDocumentCommand{\cartesiansquarelabel}{m m m m m O{} O{} O{} O{}}{
\begin{tikzcd}[ampersand replacement=\&]
  #1 \arrow{r}{#6} \arrow{d}{#7} \arrow[dr, phantom, "\square"] \& #2 \arrow{d}{#8} \\
  #3 \arrow{r}{#9} \& #4
\end{tikzcd}\IfValueTF{#5}{\label{#5}}{}
}

\NewDocumentCommand{\triangleofspaces}{m m m O{} O{} O{}}{
\begin{tikzcd} [ampersand replacement=\&]
#1 \arrow{r}{#4} \arrow[bend right]{rr}{#5} \& #2 \arrow{r}{#6} \& #3
\end{tikzcd}}

\newcommand{\on}{\operatorname}
\newcommand{\oM}{\overline{\mathcal M}}
\newcommand{\cM}{\overline{\mathbb M}_{1,n}}
\newcommand{\cG}{\mathbb G_{1,n}}

\newcommand{\oC}{\overline{C}}

\newcommand{\disc}[1]{{#1}^{{\rm disc}}}
\usepackage{hyphenat}
\begin{document}
 
\title[PGMS of Gorenstein curves of genus one]{The unbearable projectivity of being a good moduli space\\ \small{of Gorenstein curves of genus one with non-colliding markings}}

\author{Luca Battistella}
\address{Dipartimento di Matematica, Universit\`a di Bologna, Italy}
\email{luca.battistella2@unibo.it}

\author{Andrea Di Lorenzo}
\address{Dipartimento di Matematica, Universit\`a di Pisa, Italy}
\email{andrea.dilorenzo@unipi.it}

\author{Michele Pernice}
\address{Department of Mathematics, University of Washington, Seattle, WA, USA}
\email{mpernice@uw.edu}

\begin{abstract}
We classify all open substacks admitting a \emph{proper good moduli space} inside the stack $\Gun$ of log-canonically polarized Gorenstein curves of genus one with $n$ distinct marked points: they are exactly the stacks $\oM_{1,n}(c)$ defined by Bozlee--Kuo--Neff. Restricting to the case where $c$ is \emph{pure of level $m$}, we then give a numerical criterion for the \emph{projectivity} of the good moduli spaces: these turn out to be asymptotically almost never projective as $n$ tends to infinity. 
\end{abstract}

\maketitle
\setcounter{tocdepth}{1}
\tableofcontents

\section{Introduction}
It is often said that moduli theory is partly art, and partly science. The art here is taken from Bozlee--Kuo--Neff, Franzen, Kundera, Shakespeare, and Smyth. 
As for the science: the main contribution of this paper is an application of the new theory of stacks due to J. Alper, D. Halpern-Leistner, and J. Heinloth to the moduli stack of pointed curves of genus one that highlights the close connection with logarithmic geometry.

Let $\mathbb{M}_{1,n}$ denote the moduli space of smooth genus one curves with $n$ distinct markings, and let $\cM$ denote the Deligne--Mumford compactification by stable curves. The birational geometry of the latter is still largely mysterious. Informally speaking, our paper revolves around the following:
\begin{question}[informal]
    What are the projective birational models of $\cM$?
\end{question}
We will not answer this question, which seems out of reach, especially because we know that $\cM$ has infinitely many extremal effective divisors for $n\geq3$ \cite{ChenCoskun}, and in particular it is not a Mori dream space. On the other hand, we will classify the models that arise as \emph{good moduli spaces} for some algebraic stacks parametrising Gorenstein curves with distinct, smooth markings, and we will study their projectivity.

To state the problem formally, consider the moduli stack $\Gun$ parametrising Gorenstein, reduced curves of arithmetic genus one, together with $n$ distinct sections of the smooth locus such that the log canonical bundle $\omega_C(p_1+\ldots+p_n)$ is ample. From a stack-theoretic viewpoint, this is a very mildly non-separated algebraic stack: every curve admits at most one worse-than-nodal singularity, and the automorphism groups are either finite (as for ordinary elliptic curves), or isomorphic to the multiplicative group. We then ask the following:
\begin{question}[formal]\label{question}
  What are the open substacks of $\Gun$ 
  admitting a proper good moduli space?
\end{question}
 The open substacks of $\Gun$ that are proper and Deligne--Mumford have been classified by S. Bozlee, B. Kuo, and A. Neff \cite{BKN}. They are parametrised by certain functions $c\colon\on{Part}([n])\to \{0,1\}$ specifying which singularities (resp. rational tail structures) are allowed. The same authors have introduced more general algebraic substacks, parametrised by functions $c\colon\on{Part}([n])\to [0,1]$ interpolating among the previous ones. Our first main result, which answers \Cref{question} and generalizes the Bozlee--Kuo--Neff classification, is the following.
 \begin{theorem}[\Cref{theo:classification}]
     The stacks $\oM_{1,n}(c)$ introduced by Bozlee, Kuo, and Neff admit proper good moduli spaces. Conversely, every open substack of $\Gcal_{1,n}$ admitting a proper good moduli space is of the form $\oM_{1,n}(c)$ for some BKN function $c\colon\on{Part}(n)\to [0,1]$.
 \end{theorem}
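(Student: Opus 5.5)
The plan is to use the Alper--Halpern-Leistner--Heinloth existence criterion in both directions. An algebraic stack of finite type with affine diagonal over a field of characteristic zero has a separated good moduli space if and only if it is $\Theta$-reductive and S-complete. That moduli space is proper if and only if the stack moreover satisfies the existence part of the valuative criterion.

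For the first half of the theorem we verify these properties for $\oM_{1,n}(c)$. We will take from Bozlee--Kuo--Neff that $\oM_{1,n}(c)$ is an open substack of $\Gun$ of finite type. We also take from them that it satisfies the existence part of the valuative criterion, via stable reduction followed by contraction of the elliptic core to the allowed elliptic $m$-fold point. The automorphism groups are finite or $\Gm$, hence linearly reductive, and the diagonal is affine because $\Gun$ is.

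It remains to check $\Theta$-reductivity and S-completeness. We will use the standard translation of both into filling problems over the punctured regular surface $\Spec R[s,t]/(st-\pi) \smallsetminus 0$ or $[\Spec R[t]/\Gm]\smallsetminus 0$. The strategy is the following.
\begin{enumerate}
\item Extend the family across the codimension-two point after a log modification. Concretely, pass to a minimal semistable model over the blown-up surface, using log smoothness of the family of nodal curves.
\item Show that the $\Gm$-equivariant central fibre can be obtained by contracting a $\Gm$-invariant subcurve of genus one to an elliptic $m$-fold point.
\item Show that the inequalities defining a BKN function $c$ force the resulting curve to lie in $\oM_{1,n}(c)$. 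Here $c$ weighs the core level against the singularity level, with the value $1/2$-type thresholds interpolating.
\end{enumerate}
The key combinatorial input is that an equivariant degeneration with $\Gm$-fixed limit pairs an elliptic $m$-fold point of level $P$ with a rational-tail configuration whose core level is $P$, as in the figures. The condition $c(P)\in(0,1)$ is exactly what allows both ends of such a $\Gm$-orbit to be present. With $c(P)\in\{0,1\}$ only one end is present, which is what makes that case Deligne--Mumford.

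For the converse, let $\Ucal\subset\Gun$ be open with a proper good moduli space. We define $c(P)$ according to which of the two strata indexed by $P$ meet $\Ucal$:
\begin{enumerate}
\item $c(P)=1$ if only the singularity level $P$ occurs;
\item $c(P)=0$ if only the core level $P$ occurs;
\item $c(P)$ is intermediate if both occur.
\end{enumerate}
To pin down the intermediate values we will use the finer data: which elliptic $m$-fold points of level $P$ and which tails occur. We then check four things. Existence of limits forces at least one option for every degeneration. S-completeness forbids two non-isomorphic, non-$\Gm$-related limits. Openness gives the monotonicity axioms of BKN functions along specialization of partitions. Finally, $\Theta$-reductivity forces that whenever the singular stratum is present, the $\Gm$-fixed polystable point and its orbit closure are too. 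Together these show $\Ucal=\oM_{1,n}(c)$ with $c$ satisfying the BKN axioms.

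The main obstacle will be the S-completeness and $\Theta$-reductivity verification, step 3 of the first half. There one must control how an arbitrary family over the punctured surface extends, and show the needed contraction exists equivariantly. I would first try reducing to the case of a single elliptic singularity via the $\Gm$-action and the explicit local structure of elliptic $m$-fold points, invoking Smyth's contraction lemma on the semistable model.
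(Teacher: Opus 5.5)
\textbf{The converse misplaces its key step.} Openness does not give the BKN axiom. For $P'\preceq P$, $\Ell{P}$ lies in the closure of $\Ell{P'}$ and $\Tail{P'}$ lies in the closure of $\Tail{P}$. So openness only says that meeting $\Ell{P}$ forces meeting $\Ell{P'}$, and meeting $\Tail{P'}$ forces meeting $\Tail{P}$. It never forbids $\Ucal$ from meeting both $\Ell{P}$ and $\Tail{P'}$ with $P'\precneq P$; $\Gun$ itself does so. That prohibition is the entire content of ``$c_P>0\Rightarrow c_{P'}=1$''.

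The prohibition needs $\Theta$-completeness. A curve with an elliptic $P'$-fold point degenerates over a DVR to one of singularity level $P$, and isotrivially to the $P'$-atom. Any filling $C_{00}$ would satisfy $P\preceq\sing(C_{00})\preceq\core(C_{00})\preceq P'\precneq P$, which is impossible. Both degenerations lie in $\Ucal$ only once you know the $P'$-atom is in $\Ucal$, and that is an S-completeness argument.

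The job you assign to $\Theta$-reductivity is in fact impossible. If $c(P)=1$, then $\oM_{1,n}(c)$ contains elliptic $P$-fold points but not the $P$-atom to which they isotrivially degenerate, because the atom has core level $P$. The atom lies in $\Ucal$ exactly when $\Ucal$ meets both $\Tail{P}$ and $\Ell{P}$, and this is proved by S-completeness.

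Intermediate values also carry no ``finer data'': $\oM_{1,n}(c)$ only sees whether $c(P)$ is $0$, $1$, or in between. What you must prove instead is that $\Ucal$ is a union of the whole strata $\Tail{P}\smallsetminus\Atom{P}$, $\Ell{P}\smallsetminus\Atom{P}$ and $\Atom{P}$. This uses universal closedness together with the unique closed point in each fibre of $\Tail{P}\to M_{0,P}$. It also uses S-completeness of an auxiliary $\oM_{1,n}(c)$ containing the relevant DVR families, i.e.\ the forward direction.

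\textbf{The forward direction has a gap at step 1.} Step 1 presupposes a log smooth family of nodal curves. But a map to $\oM_{1,n}(c)$ from $\overline{ST}_R\smallsetminus 0=\Spec R\cup_{\Spec K}\Spec R$, or from $\Theta_R\smallsetminus 0=\Theta_K\cup_{\Spec K}\Spec R$, may have an elliptic singularity in every fibre. There is then no semistable model to extend. Nor can $\Theta$- and S-completeness simply be tested on generically smooth families.

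The paper avoids any equivariant extension problem:
\begin{enumerate}
\item It forms $R(c)$, the pullback of $\mu\colon\Theta^{2d}\to\Theta^d$ along $\gamma_c\colon\Mrad\to\Theta^d$. Since $\Mrad$ is proper Deligne--Mumford, $R(c)$ is universally closed, $\Theta$-complete and S-complete.
\item The contraction $\Psi_c\colon R(c)\to\oM_{1,n}(c)$ is $\Theta$- and S-complete because the target has affine diagonal.
\item Hence universal closedness and surjectivity of $\Psi_c$ reduce to generically smooth DVR families. Your stable-reduction-and-contraction idea works there: the exceptional locus is a disc, and $c$-stability forces its radius into $[r_{\min},r_{\max}]$.
\item All three properties then descend along $\Psi_c$. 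For $\Theta$-completeness this is delicate: $\Theta_K$-points lift only up to root stacks, and one needs that $[n]_*$ on $\underline{\rm Map}(\Theta,-)$ is a closed embedding.
\end{enumerate}
Without some such device, your steps 1--3 remain the unresolved obstacle you yourself flag.
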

 Stacks admitting a proper good moduli space are characterised in \cite{AHLH} as the ones that are $\Theta$-complete, $S$-complete and universally closed. Our strategy consists in classifying the open substacks which satisfy these conditions. Our main contribution is the realisation that these conditions are very natural from the perspective of logarithmic geometry, upon which the definition of the $\oM_{1,n}(c)$ is based.
 
 We then study the projectivity of the good moduli spaces $\cM(c)$. Our starting point is the maximal open substack $\Gun(m)$ admitting a proper good moduli space and containing both of Smyth's stacks $\oM_{1,n}(m-1)$ and $\oM_{1,n}(m)$: it parametrises curves with at worst $m$-fold elliptic points and core level at least $m$. We prove that its good moduli space $\cG(m)$ is projective. All the stacks $\oM_{1,n}(c)\subset\Gun(m)$ with projective moduli space are semistable loci for line bundles $\mathcal{L}$ relative to $\cG(m)$: this imposes conditions on $\mathcal{L}$, eventually leading to our second main result. 
\begin{theorem}[\Cref{thm:condition for proj}]\label{thm:projectivity intro}
    The good moduli space of a stack $\oM_{1,n}(c) \subset \Gun(m)$ is projective if and only if there exist real values $a$, $\{b_S\}_{S\subset [n]}$ such that for every partition of the form $P=\{P_1,\ldots, P_m\}$ we have
    \begin{equation*}
    \left\{
    \begin{matrix}
        a - \sum_{\lvert P_i \rvert \geq 2} b_{P_i}<0 & &\text{ if }c(P)=1,\\
        a - \sum_{\lvert P_i \rvert \geq 2} b_{P_i}=0 & &\text{ if }0<c(P)<1, \\
        a - \sum_{\lvert P_i \rvert \geq 2} b_{P_i}>0 & &\text{ if }c(P)= 0.
    \end{matrix}
    \right.
    \end{equation*}
\end{theorem}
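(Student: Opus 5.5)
The plan is to reduce projectivity to a variation-of-GIT statement relative to the projective good moduli space $\cG(m)$ of $\Gun(m)$, which the introduction asserts is projective. First I will show that a good moduli space $M(c)$ of an open substack $\oM_{1,n}(c)\subset\Gun(m)$ is projective if and only if $\oM_{1,n}(c)$ is the semistable locus, relative to $\cG(m)$, of some ($\RR$-)line bundle $\mathcal{L}$ on $\Gun(m)$. One direction is standard: given $\mathcal{L}$ relatively ample enough, $M(c)=\on{Proj}$ of the relative section ring over $\cG(m)$, hence projective over a projective base. For the converse I would use that $\oM_{1,n}(c)\to\Gun(m)$ induces a proper birational morphism $M(c)\to\cG(m)$; if $M(c)$ is projective, pull back an ample bundle from $M(c)$ (a line bundle on the good moduli space pulls back to $\oM_{1,n}(c)$) and extend it across the complement, which has codimension $\ge 2$ or is a union of explicit divisors. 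Then check that its semistable locus is exactly $\oM_{1,n}(c)$, using that semistability is detected on $\Theta$-filtrations, i.e.\ on the closed $\Gm$-fixed points.

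Next I will make the Hilbert–Mumford criterion explicit. The points of $\Gun(m)$ with positive-dimensional stabilizer are curves with an $m$-fold elliptic point whose branches form a partition $P=\{P_1,\ldots,P_m\}$ of the markings, with rational tails carrying the $P_i$ with $|P_i|\ge2$. Each such point has a $\Gm$-stabilizer and two $\Theta$-filtrations in opposite directions: degenerating to it from the ``core level $P$'' side (elliptic core with rational trees $R_i$) or from the ``singularity level $P$'' side (smoothing the elliptic $m$-fold point). I will pick generators of $\Pic(\Gun(m))\otimes\RR$ relevant here: $\lambda$ (Hodge), or $\kappa$/$\psi$, contributing a coefficient $a$, and boundary divisors $\delta_{0,S}$ contributing $b_S$. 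The $\Gm$-weight of $\mathcal{L}=a\lambda-\sum b_S\delta_{0,S}$ (up to sign convention) at the fixed point of type $P$ is then $a-\sum_{|P_i|\ge2}b_{P_i}$. The computation is to show that $\lambda$ has weight one while $\delta_{0,P_i}$ has weight one exactly on the tails $R_i$; this follows from the local description of the elliptic $m$-fold point as a $\Gm$-equivariant smoothing and the branches $R_i$ sharing the $\Gm$-action, while the other boundary divisors have weight zero.

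Finally I will translate into the three cases. $c(P)=1$ means that the core-level-$P$ curves are in the stack and the singular one is excluded, so the weight must be strictly negative. $c(P)=0$ is the opposite, giving a positive weight. $0<c(P)<1$ means that the polystable $\Gm$-fixed curve itself lies in $\oM_{1,n}(c)$, forcing weight zero. Because the BKN stack is determined by the values $c(P)$ on such partitions, and by the first step semistable loci are determined by these signs, the equivalence follows. I expect the main obstacle to be the converse in the first step: producing from projectivity of $M(c)$ a line bundle on all of $\Gun(m)$ whose semistable locus is exactly $\oM_{1,n}(c)$. The issue is controlling the extension, and showing that every line bundle on $\Gun(m)$ is, as far as its weights are concerned, of the form above. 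For this I would first try to compute $\Pic(\Gun(m))_\RR$ via its open dense locus $\oM_{1,n}(m)$, Smyth's stack.
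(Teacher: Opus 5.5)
Your forward direction is the paper's argument. It uses the normalised weights of $\lambda$ and $\delta_{0,B}$ along the two kinds of $\Theta$-degenerations onto $\Atom{P}$, $\lvert P\rvert=m$ (\Cref{lm:weights}), together with the $\Theta$-criterion for relative semistability. It then applies \Cref{prop:gms of sstable locus}, which makes the good moduli space quasi-projective, hence projective, over $\cG(m)$. Projectivity of $\cG(m)$ is itself a substantial separate input (\Cref{thm:G(m) proj}, proved by Nakai--Moishezon and induction on $n$). One correction: along a fixed degeneration $\lambda$ and $\delta_{0,P_i}$ have \emph{opposite} weights ($+1$ and $-1$ for the core-type one). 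So $a\lambda-\sum b_S\delta_{0,S}$ has weight $a+\sum b_{P_i}$; this is harmless, since the statement is existential in the $b_S$.

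The converse has a genuine gap exactly at the step you flag, and ``extend and check'' does not work as stated. The complement of $\oM_{1,n}(c)$ in $\Gun(m)$ contains the divisors $\Delta_{0,S}$ with $\lvert S\rvert=n-m+1$ and $c(\disc{S})=1$. Across these, the extension of the pulled-back ample bundle is only defined up to adding $t\,\delta_{0,S}$, which shifts the weights at $\Atom{\disc{S}}$ by $\mp t$. For $t$ on the wrong side, the relative semistable locus contains curves of core level $\disc{S}$, so it is not $\oM_{1,n}(c)$. Ampleness on $\cM(c)$ cannot decide this sign, because $\cM(c)\to\cG(m)$ contracts nothing over the image of $\Atom{\disc{S}}$: there is one non-singleton block, so the crimping fibre $[(\Aaff^1\smallsetminus 0)/\Gm]$ is a point. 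Hence $b_S$ must be \emph{chosen}, which is possible only because it occurs in the single condition indexed by $\disc{S}$. For the other partitions with $c(P)\in\{0,1\}$, you still need a mechanism converting ampleness on $\cM(c)$ into strict signs of the weights at atoms lying \emph{outside} $\oM_{1,n}(c)$. The $\Theta$-criterion only tells you what must be verified, not why it holds.

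The paper closes the gap with an auxiliary pure BKN function $c'$, equal to $c$ on the partitions $\disc{S}$ and to $1/2$ on all other partitions of length $m$. Then $\cM(c')$ is projective by \Cref{cor:projectivity of big contractions}, where the $b_S$ with $\lvert S\rvert=n-m+1$ are indeed chosen by hand. The complement of $\oM_{1,n}(c)$ in $\oM_{1,n}(c')$ has codimension at least two: $\Tail{P}$ has codimension equal to the number of non-singleton blocks of $P$, and $\Ell{P}$ has codimension $m+1$. The stack is normal with all line bundles extending, and $\cM(c)\to\cM(c')$ is projective. The Hartogs-type lemma \cite[Lemma 2.18]{DLIstable} then makes $\oM_{1,n}(c)$ a relative semistable locus over $\cM(c')$, hence over $\cG(m)$ by \cite[Proposition 2.16]{DLIstable}. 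Finally \Cref{prop:inequalities} turns this into the stated (in)equalities.

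If you prefer your direct route, fix the $b_S$ with $\lvert S\rvert=n-m+1$ by hand as above. For the remaining $P$, compute the degree of the pulled-back ample bundle on the positive-dimensional fibres of $\cM(c)\to\cG(m)$ over the image of $\Atom{P}$. When $c(P)=1$ this fibre is the weighted projective stack $[(\Aaff^k\smallsetminus 0)/\Gm]$ of crimping data, with $k\geq 2$ the number of non-singleton blocks. When $c(P)=0$ it is a copy of $\overline{\mathbb{M}}_{1,m}(m-1)$, on which $a\lambda+\sum b_S\delta_{0,S}$ restricts to $(a-\sum b_{P_i})\lambda$ with $\lambda$ ample. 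In both cases the degree is a nonzero multiple of the weight at the atom, with sign fixed by the type of degeneration. Also, read $\Pic(\Gun(m))$ off $\oM_{1,n}(m-1)$, whose complement has codimension at least two. Do not use Smyth's $\oM_{1,n}(m)$, which misses precisely the divisors $\Delta_{0,S}$ with $\lvert S\rvert=n-m+1$ that carry this freedom.
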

This is an explicit description of the semistability/$\mathbb Q$-factorialisation fan of $\cG(m)$, see D. Gori's PhD thesis \cite{GoriThesis}. We use our theorem to deduce the following.
\begin{corollary}[\Cref{cor:number of non-prog gms}]
   The following hold true:
    \begin{enumerate}
        \item for $m=1,2,n-1,n$ all the stacks $\oM_{1,n}(c) \subset \mathcal{G}_{1,n}(m)$ have a projective good moduli space;
        \item for other values of $m$, there are stacks $\oM_{1,n}(c)$, both Deligne-Mumford and not, that have non-projective moduli spaces.
    \end{enumerate}
    More precisely, in the second case, there are at least $\sum_{i=d+1}^{S(n,m)} \binom{S(n,m)}{i}$ Deligne-Mumford stacks with a non-projective moduli space, where $d= \sum_{i=2}^{n-m+1} \binom{n}{i}$ and $S(n,m)$ is the Stirling number of the second kind. 
\end{corollary}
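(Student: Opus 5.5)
The plan is to reduce both parts to the numerical criterion of \Cref{thm:projectivity intro} and then treat it as a problem about sign patterns of affine functionals. For each partition $P=\{P_1,\dots,P_m\}$ of $[n]$ into $m$ parts, put $w_P\in\{0,1\}^{\mathcal S}$, the indicator vector of its non-singleton parts. Here $\mathcal S$ is the set of subsets $S\subset[n]$ with $2\le |S|\le n-m+1$, which are exactly the sizes a non-singleton part of an $m$-part partition can have, so $|\mathcal S|=d$. The criterion then says: $\overline{\mathbb M}_{1,n}(c)$ is projective iff some affine function $\ell(w)=a-\langle b,w\rangle$ on $\mathbb R^{d}$ has the sign prescribed by $c$ at every point $w_P$, with $\ell(w_P)=0$ allowed exactly when $0<c(P)<1$. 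I also need to know which $c$ actually occur inside $\Gcal_{1,n}(m)$. I would use the BKN description recalled in \Cref{theo:classification} to check that, for $\overline{\mathbb M}_{1,n}(c)\subset\Gcal_{1,n}(m)$, the function $c$ is determined by its values on level-$m$ partitions, and that in the Deligne--Mumford case every $\{0,1\}$-valued function on these partitions is admissible.

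\textbf{Part (1).} In each listed case the functionals $\ell(w_P)$ involve pairwise disjoint sets of variables, so any prescribed pattern of signs or zeros can be met independently.
\begin{itemize}
\item For $m=n$ there is a single partition, into singletons, and $\ell=a$ can take any sign.
\item For $m=1$ there is a single partition $\{[n]\}$, and $\ell=a-b_{[n]}$.
\item For $m=n-1$ the partitions are indexed by pairs $\{i,j\}$, with $\ell=a-b_{\{i,j\}}$. Fix $a=0$ and choose each $b_{\{i,j\}}$ freely.
\item For $m=2$ the partitions are $\{S,S^c\}$, with $\ell=a-b_S-b_{S^c}$ (dropping singleton terms). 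Each $b_S$ occurs in exactly one partition, so again fix $a=0$ and choose the $b$'s independently.
\end{itemize}

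\textbf{Part (2).} Set $N=S(n,m)$. A Deligne--Mumford $c$ is a strict dichotomy of the $N$ points $w_P$, and it gives a projective space iff this dichotomy is cut out by an affine hyperplane. I would bound the number of hyperplane-separable dichotomies of $N$ points by a Sauer--Shelah / Cover-type count, aiming for
\[
\#\{\text{separable dichotomies}\}\le \sum_{i=0}^{d}\binom{N}{i}.
\]
Subtracting this from $2^N$ yields at least $\sum_{i=d+1}^{N}\binom{N}{i}$ non-projective Deligne--Mumford stacks, which is positive exactly when $N>d$. To make part (2) explicit for each $3\le m\le n-2$, I would also produce a concrete obstruction: four partitions satisfying an affine relation $w_{P}+w_{P'}=w_{Q}+w_{Q'}$. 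For example, take two disjoint pairs and swap their elements, possibly together with fixed further blocks, to get $m$ parts. Setting $c=1$ on $P,P'$ and $c=0$ on $Q,Q'$ makes the system infeasible, since summing the inequalities gives $0<0$. For a non-DM example, take $c$ strictly between $0$ and $1$ on $P,P'$ and $Q$, and $c=0$ on $Q'$. The relation $m\ge3$ and $m\le n-2$ is what leaves room for two disjoint two-element blocks.

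\textbf{Main obstacle.} The delicate step is the exact dimension count in the Sauer--Shelah bound. Affine halfspaces in $\mathbb R^d$ have VC dimension $d+1$, which naively gives only $\sum_{i\le d+1}\binom{N}{i}$. To reach the stated bound with $d$ I would try two routes:
\begin{itemize}
\item show that the points $w_P$ span an affine subspace of dimension $<d$, by finding a linear relation among the $w_P$;
\item use Cover's sharper count $2\sum_{i<D}\binom{N-1}{i}\le\sum_{i\le D}\binom{N}{i}$ with the correct effective dimension $D$.
\end{itemize}
A second, smaller point needs to be checked against the BKN axioms: that arbitrary $\{0,1\}$-valued $c$ on level-$m$ partitions really define stacks contained in $\Gcal_{1,n}(m)$.
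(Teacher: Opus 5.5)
Your reduction to sign patterns of the affine functions $\ell_P=a-\langle b,w_P\rangle$ at the points $w_P$ is correct and follows the paper. So are your check that every function on $\Pi_{n,m}$ extends to a pure BKN function and your part (1); your ``disjoint variables'' argument is the linear independence of the $H_P$ that the paper uses.

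The gap is the step you yourself flag in part (2): neither route is carried out. The missing input is the identity $\sum_{B\in P}(\lvert B\rvert-1)=n-m$ for every $P\in\Pi_{n,m}$.
\begin{itemize}
\item It says that all $w_P$ lie on the affine hyperplane $\sum_{S\in\mathcal S}(\lvert S\rvert-1)x_S=n-m$ of $\mathbb{R}^d$. Dually, every $H_P$ contains the class $H(m)$, which is exactly why $H(m)$ descends in \Cref{lm:Pic G(m)}.
\item Hence the configuration lives in a $(d-1)$-dimensional affine space. There open half-spaces have VC dimension $d$, and Sauer--Shelah gives exactly $\sum_{i\le d}\binom{N}{i}$ strict dichotomies.
\item What you need is a linear functional constant on all the $w_P$, not a linear dependency among them.
\item Cover's count with $D=d+1$ only gives $\sum_{i\le d}\binom{N}{i}+\binom{N-1}{d}$. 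So your second route also needs this observation to bring $D$ down to $d$.
\end{itemize}
The paper's own proof bounds the number of regions by $\sum_{i\le\dim V}\binom{S(n,m)}{i}$ with $\dim V=d+1$. Literally this only yields $\sum_{i\ge d+2}$, and the same observation is what justifies the stated count there.

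Your explicit $4$-term obstruction also fails in part of the range.
\begin{itemize}
\item If $w_P+w_{P'}=w_Q+w_{Q'}$, then $\{P,P'\}$ and $\{Q,Q'\}$ have the same multiset of blocks, since singletons are determined by the non-singleton blocks.
\item When $m=3$, two blocks of $Q$ come from the same one of $P,P'$ and force the third. So $Q\in\{P,P'\}$ and the relation is trivial.
\item The version you sketch (swapping two pairs against fixed further blocks) needs two distinct completions of the remaining $n-4$ points into $m-2$ blocks, i.e.\ $4\le m\le n-3$.
\end{itemize}
So for $m=3$, e.g.\ $(n,m)=(5,3)$, part (2) rests on the count alone. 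Its positivity needs $S(n,m)>d$, which you never check.

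The argument that works uniformly is the paper's: take any dependence $\sum_P\alpha_P\ell_P=0$.
\begin{itemize}
\item The Deligne--Mumford function with $c(P)=0$ where $\alpha_P>0$ and $c(P)=1$ where $\alpha_P<0$ violates \eqref{eq:inequalities}, since it would make every nonzero $\alpha_P\ell_P$ positive.
\item Setting $c=1/2$ on all but one element $P_0$ of the support, and $c(P_0)\in\{0,1\}$, gives a non-DM example.
\item Since the $\ell_P$ span a space of dimension $d$ (\Cref{lm:Pic G(m)}), such a dependence exists iff $S(n,m)>d$.
\end{itemize}
This inequality does hold for $3\le m\le n-2$. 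Partitions with one non-singleton block number $\binom{n}{n-m+1}$. Those with exactly two number $\frac12\sum_{i=2}^{n-m}\binom{n}{i}\binom{n-i}{m-2}$, with $\binom{n-i}{m-2}\ge 3$ in this range. Together these already exceed $d=\binom{n}{n-m+1}+\sum_{i=2}^{n-m}\binom{n}{i}$.
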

The case $m\leq2$ encompasses moduli spaces of curves with only nodes, cusps, and tacnodes; projectivity in this case seems to be a more general (any $g\geq1$) phenomenon, analysed in detail in the work of G. Codogni, L. Tasin, and F. Viviani \cite{CTV1,CTV2}.

The second part of the corollary above implies that the moduli stacks $\oM_{1,n}(c)\subset \Gun(m)$ whose good moduli spaces are projective become increasingly rare as $n$ grows: in particular for $m\geq 3$ their ratio is bounded above by $f(n)=\exp({n2^n-m^n})$.

We also give an exotic example of a 
stack $\oM_{1,n}(c)$ \emph{not} contained in any $\Gun(m)$, whose moduli space has trivial Picard group (in particular, it cannot be projective). It suggests perhaps that the behavior of the moduli spaces not covered by \Cref{thm:projectivity intro} can be even wilder.

\subsection*{Did you guys use AI?\nopunct} Conversations with AI (Gemini, ChatGPT) were helpful in two instances: for studying the number of regions created by a hyperplane arrangement in a real vector space, relevant for \Cref{lm:Pic G(m)} and \Cref{cor:asymptotics}, and for proofreading.
 
\subsection*{Acknowledgements} We are grateful to Sebastian Bozlee, Francesca Carocci, Giovanni Inchiostro, Diane Maclagan, Dhruv Ranganathan, and Jonathan Wise for helpful conversations related to this paper. 

\subsection*{Notation and conventions} We let $n$ denote an integer $n\geq2$, and $m$ an integer $0\leq m\leq n$. We denote by $[n]$ the set $\{1,\ldots,n\}$, by ${[n]\choose k}$ the collection of all subsets of $[n]$ of fixed cardinality $k$, and by $[n]_a^b$ the collection of all subsets of $[n]$ of cardinality ranging between $a$ and $b$. We denote by $\Pi_n$ the collection of set partitions of $[n]$, and by $\Pi_{n,m}$ the collection of set partitions of $[n]$ into $m$ parts (or blocks). The cardinality of $\Pi_{n,m}$ is called the Stirling number of the second kind and is denoted by $S(n,m)$. We work throughout over an algebraically closed field of characteristic $0$.

\section{Preliminaries on good moduli spaces}

\subsection{Good moduli spaces for algebraic stacks}
The definition of a good moduli space for an Artin stack was given by Alper in his Ph.D. thesis \cite[Definition 4.1]{AlperGood}, generalising the notion of the coarse moduli space of a Deligne--Mumford stack by extending 
the ideas of Mumford's Geometric Invariant Theory (GIT).

\begin{definition}\label{def:gms}
    Let $\Xcal$ be an Artin stack (with separated, quasi-compact diagonal). A morphism to an algebraic space $\phi\colon\Xcal \to X$ is called a \emph{good moduli space} if:
    \begin{enumerate}[leftmargin=.5cm]
       \item $\phi$ is cohomologically affine, i.e. the pushforward of quasi-coherent sheaves along $\phi$ is exact, and
       \item the natural map $\OO_X\to\phi_*\OO_{\Xcal}$ is an isomorphism.
    \end{enumerate}
\end{definition}

Good moduli spaces enjoy several desirable properties, such as being surjective and universally closed, stability under pullback, and universality for maps to algebraic spaces.

While the Keel--Mori theorem identifies the conditions for a Deligne--Mumford stack to admit a coarse moduli space, Alper, Halpern-Leistner, and Heinloth have found criteria for an algebraic stack to admit a good moduli space \cite[Theorem A]{AHLH}. Interestingly, these are all valuative criteria. We recall the relevant definitions before stating their theorem.

\begin{definition}
    Let $\Theta$ denote the quotient stack $[\Aaff^1/\Gm]$, and let $\Theta_R$ denote the product $\Theta \times \Spec R$. If $R$ is local with residue field $k$, let $0$ denote the point $B\Gm \times \Spec k $ of $\Theta_R$. A morphism of locally Noetherian algebraic stacks $f\colon\Xcal \to\Ycal$ is called \emph{$\Theta$-complete} if for every discrete valuation ring (DVR) $R$, any commutative diagram
    \bcd
    \Theta_R\smallsetminus 0\ar[r]\ar[d,hook] & \Xcal\ar[d,"f"] \\
    \Theta_R\ar[r]\ar[ur,dashed] & \Ycal \\
    \ecd
    of solid arrows can be filled in uniquely.
\end{definition}
Observe that $\Theta$ has an open point $\circ=[\mathbb{A}^1\smallsetminus\{0\}/\Gm]$ and a closed stacky point $\bullet =[\Spec(k)/\Gm]$. Therefore when giving a map $f\colon\Theta \to \mathcal{X}$ we have in particular objects $x:\circ\to\mathcal{X}$ and $x_0:\bullet\to \mathcal{X}$, together with a specialisation of $x$ to $x_0$; we will sometimes denote the map $f$ as $x\rightsquigarrow x_0$.
\begin{definition}
    Let $R$ be a DVR with a choice of uniformiser $\pi$. Consider the quotient:
    \[\overline{ST}_R=[\Spec(R[s,t]/(st-\pi))/\Gm],\]
    where $\Gm$ acts with weight $1$ on $s$ and $-1$ on $t$. We denote by $0$ the closed point given by $s=t=0$. A morphism of locally Noetherian algebraic stacks $f\colon\Xcal \to\Ycal$ is called \emph{S-complete} if for every DVR, any commutative diagram
    \bcd
    \overline{ST}_R\smallsetminus 0\ar[r]\ar[d,hook] & \Xcal\ar[d,"f"] \\
    \overline{ST}_R\ar[r]\ar[ur,dashed] & \Ycal \\
    \ecd
    of solid arrows can be filled in uniquely.
\end{definition}

We are ready to state \cite[Theorem A]{AHLH}.
\begin{theorem}\label{gms_criterion}
    Let $\Xcal$ be an algebraic stack of finite presentation, with affine stabilisers and separated diagonal over a Noetherian algebraic space $S$ of characteristic $0$.
Then $\Xcal$ admits a separated good moduli space $X$ if and only if $\Xcal$ is $\Theta$-complete and S-complete.
Moreover, $X$ is proper if and only if $\Xcal$ satisfies the existence part of the valuative criterion for properness.
\end{theorem}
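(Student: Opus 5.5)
This statement is \cite[Theorem A]{AHLH}, which the paper quotes rather than proves, so I can only outline the Alper--Halpern-Leistner--Heinloth argument. Necessity is the easy direction. If $\phi\colon\Xcal\to X$ is a separated good moduli space, then $\Theta$-completeness and S-completeness come from two facts. First, good moduli spaces are universal for maps to algebraic spaces. Second, the stacks $\Theta_R$ and $\overline{ST}_R$ have good moduli space $\Spec R$ and are "local" in a suitable sense. Given a punctured map $\Theta_R\smallsetminus 0\to\Xcal$, the composite to $X$ extends over $\Spec R$ because $X$ is separated. One then lifts to $\Xcal$ by the local structure theorem: étale locally near the image of the closed point, $\Xcal\cong[\Spec A/G_x]$ with $G_x$ linearly reductive. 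For such quotient stacks the existence of the filling reduces to an algebraic statement about graded or bigraded $R$-algebras, which holds because $\Gm$-invariants of a finitely generated algebra behave well under linear reductivity. Uniqueness follows from the separatedness of the diagonal together with the fact that $0$ has codimension at least one.

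For sufficiency I would proceed in three steps.
\begin{enumerate}
\item Show that $\Theta$-completeness forces every point $x$ to specialise to a unique closed point, and that closed points have linearly reductive stabilisers. Here S-completeness applied to the constant family on $\overline{ST}_k$ gives reductivity of the stabiliser; characteristic $0$ is used at this step.
\item Apply the Alper--Hall--Rydh local structure theorem at each closed point to get étale neighbourhoods $[\Spec A/G_x]\to\Xcal$. I must show these neighbourhoods can be chosen \emph{stabiliser preserving} and sending closed points to closed points; this is where both valuative conditions are needed. $\Theta$-completeness controls the specialisations coming from one-parameter subgroups (via Hilbert--Mumford), and S-completeness controls pairs of closed points that are identified in the limit.
\item Glue the GIT quotients $\Spec A^{G_x}$ using Alper's descent criterion for good moduli spaces along étale, stabiliser-preserving, closed-point-preserving morphisms.
\end{enumerate}
I expect step 2, the stabiliser-preserving refinement, to be the main obstacle. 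The main tool there is the technical result that a $\Theta$-complete and S-complete stack has closed points with "unpunctured" inertia.

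The properness statement should then follow from separatedness of $X$ plus surjectivity of $\phi$. A DVR-point of $X$ lifts to $\Xcal$ after an extension, using the existence part of the valuative criterion for $\Xcal$. Conversely, universal closedness of $\phi$ transfers the criterion from $X$ to $\Xcal$.
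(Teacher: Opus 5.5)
Your proposal matches the paper: the statement is quoted from \cite[Theorem A]{AHLH} and not reproved there, and your outline follows the shape of the AHLH argument (linearly reductive closed stabilisers, then local quotient charts made closed-point-preserving via $\Theta$-completeness and stabiliser-preserving via unpunctured inertia, then gluing). A few details of the sketch are loose: reductivity of $G_x$ comes from S-completeness of the closed residual gerbe $BG_x$, tested on non-trivial $G_x$-torsors over $\overline{ST}_R\smallsetminus 0$ (e.g.\ for $B\mathbb{G}_a$ the torsor glued by $\pi^{-1}$ does not extend); the unique-closed-point property holds only for points of finite type (the generic point of $\mathbb{A}^1$ specialises to every closed point); and you never derive separatedness of $X$, which comes from S-completeness descending along $\mathcal{X}\to X$ together with the fact that an S-complete algebraic space is separated.
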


We will say "is universally closed" for "satisfies the existence part of the valuative criterion for properness". 

\begin{remark}
    The stacks $\Theta_R$ and $\overline{ST}_R$ are simple examples of Olsson cones \cite{BCW}. The relationship between the geometry of algebraic stacks and logarithmic geometry is one of the themes that we wish to highlight throughout this work.
\end{remark}

Next, we are going to collect some useful technical results regarding the two valuative criteria.

\begin{lemma}\label{lem:map-in-the-middle}
    Let $f:\Xcal \rightarrow \Ycal$ and $g:\Ycal \rightarrow S$ be morphisms of locally noetherian algebraic stacks. If $g\circ f$ is $S$-complete (respectively $\Theta$-complete) and $g$ has affine diagonal, then $f$ is $S$-complete (respectively $\Theta$-complete). 
\end{lemma}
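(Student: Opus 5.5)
The plan is to factor $f$ through its graph and use the standard cancellation argument. Write
\[\Xcal \xrightarrow{\;\Gamma_f\;} \Xcal\times_S\Ycal \xrightarrow{\;\mathrm{pr}_2\;} \Ycal .\]
Two things then need to be shown. First, $\mathrm{pr}_2$ is the base change of $g\circ f\colon \Xcal\to S$ along $g\colon\Ycal\to S$. Second, $\Gamma_f$ is the base change of the diagonal $\Delta_g\colon\Ycal\to\Ycal\times_S\Ycal$ along $f\times\mathrm{id}\colon\Xcal\times_S\Ycal\to\Ycal\times_S\Ycal$. Once these are in place, $f=\mathrm{pr}_2\circ\Gamma_f$ is a composition of a base change of $g\circ f$ with a base change of $\Delta_g$.

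The required formal properties of $S$-completeness and $\Theta$-completeness are the following.
\begin{enumerate}
\item Both notions are stable under base change. Given a lifting problem for the base-changed morphism, composing with the projection gives a lifting problem for the original morphism. A unique solution there, together with the given map to the base, yields a unique map to the fibre product, by the universal property.
\item Both notions are stable under composition. Lift first through the outer morphism, then through the inner one; uniqueness at each stage gives uniqueness overall.
\item Affine morphisms are $S$-complete and $\Theta$-complete.
\end{enumerate}
Applying (1) and (2) to the factorisation gives the lemma from (3), because $\Delta_g$ is affine by hypothesis.

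Item (3) is the only non-formal input. Let $Z\to W$ be affine and let $\Xi$ be either $\Theta_R$ or $\overline{ST}_R$, with closed point $0$. A lift $\Xi\to Z$ over a given $\Xi\to W$ is the same as an $\OO_\Xi$-algebra map $\mathcal A\to\OO_\Xi$, where $\mathcal A$ is the pullback to $\Xi$ of the quasi-coherent algebra defining $Z$. Similarly, a map $\Xi\smallsetminus 0\to Z$ over $W$ is an $\OO_\Xi$-algebra map $\mathcal A\to j_*\OO_{\Xi\smallsetminus0}$, where $j\colon\Xi\smallsetminus 0\hookrightarrow\Xi$ is the inclusion. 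So it suffices to check that $\OO_\Xi\to j_*\OO_{\Xi\smallsetminus 0}$ is an isomorphism, i.e. a Hartogs-type extension statement.

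This can be verified on the $\Gm$-equivariant atlases. For $\Theta_R$ the atlas is $\Aaff^1_R=\Spec R[t]$ minus the codimension-two closed point $(t,\pi)$. For $\overline{ST}_R$ it is $\Spec R[s,t]/(st-\pi)$ minus the point $(s,t)$, which is again codimension two in a normal two-dimensional scheme. In both cases $\Spec R[t]$ and $\Spec R[s,t]/(st-\pi)$ are regular, hence normal and of dimension $2$. Functions therefore extend across a codimension-two closed point, and the invariant parts agree. This reduces (3), and hence the lemma, to algebraic Hartogs.

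I expect the main obstacle to be bookkeeping rather than substance. One point is checking carefully that the graph square is cartesian for stacks, which uses that $\Delta_g$ is representable. The other is checking that uniqueness in the base-change step holds up to unique $2$-isomorphism. Both follow from the standard $2$-categorical universal property of fibre products.
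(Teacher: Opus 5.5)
Your proposal is correct and follows the paper's proof. It uses the same graph factorisation $f=\mathrm{pr}_2\circ\Gamma_f$, with $\Gamma_f$ a base change of $\Delta_g$ and $\mathrm{pr}_2$ a base change of $g\circ f$, together with stability of both notions under composition and base change. The one difference is in showing that affine morphisms are $S$-complete and $\Theta$-complete: the paper cites Propositions 3.21 and 3.44 of \cite{AHLH}, whereas you give the underlying Hartogs argument ($\OO_\Xi\cong j_*\OO_{\Xi\smallsetminus 0}$, checked on the regular two-dimensional atlases), which is what those results rest on.
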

\begin{proof}
    This follows from the following general observation: let $\Pcal$ be a property of morphisms that is stable under composition and base change. If $g\circ f$ and $\Delta_g$ both have $\Pcal$, then so does $f$ by the following diagram:
    \bcd
& \Xcal\ar[r]\ar[dl] & \Xcal \times_S\Ycal\ar[dl]\ar[dr]\ar[r] & \Ycal\ar[dr] & \\
\Ycal\ar[r,"\Delta_g"] & \Ycal\times_S\Ycal & & \Xcal\ar[r,"g\circ f"] & S.\\
    \ecd
    Notice that affine morphisms are both $\Theta$- and $S$-complete (this follows from Propositions 3.21 and 3.44 of \cite{AHLH}).
\end{proof}

\begin{lemma}\label{lem:descending_closedness}
    Let $f\colon\Xcal\to\Ycal$ be a surjective morphism of stacks over $S$. If $\Xcal$ is universally closed over $S$, then so is $\Ycal$.
\end{lemma}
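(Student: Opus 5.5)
We will use the topological form of universal closedness: a morphism of stacks is universally closed if and only if every base change of it is closed on underlying topological spaces. We will not use the valuative formulation, which would force us to lift DVR points along $f$, and lifting may require a field extension. So the plan has two steps: reduce to the statement that $\Ycal_T\to T$ is closed for every $S$-scheme $T$, and then check closedness by pushing closed sets forward through $\Xcal_T$.

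\emph{Base change.} Fix a morphism $T\to S$ (an algebraic space, or an affine scheme suffices) and form the cartesian diagram with $\Xcal_T=\Xcal\times_S T$ and $\Ycal_T=\Ycal\times_S T$. Then $f_T\colon\Xcal_T\to\Ycal_T$ is again surjective, because surjectivity of morphisms of algebraic stacks is stable under base change. By hypothesis $\Xcal_T\to T$ is closed. It therefore suffices to prove the following purely topological claim: if $f\colon\Xcal\to\Ycal$ is surjective and $g\circ f$ is closed, then $g$ is closed.

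\emph{Topological step.} Let $Z\subseteq|\Ycal|$ be closed. Then $f^{-1}(Z)$ is closed in $|\Xcal|$, since $|f|$ is continuous. Because $|f|$ is surjective, $f(f^{-1}(Z))=Z$, and so
\[
g(Z)=g\bigl(f(f^{-1}(Z))\bigr)=(g\circ f)\bigl(f^{-1}(Z)\bigr).
\]
The right-hand side is closed in $|S|$ because $g\circ f$ is closed. Applying this after every base change $T\to S$ shows that $\Ycal\to S$ is universally closed.

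\emph{Link to the paper's convention.} The paper says "universally closed" to mean the existence part of the valuative criterion. To connect the two notions we will invoke the standard equivalence for quasi-compact morphisms, locally of finite type, between locally noetherian stacks (Stacks Project / Alper–Halpern-Leistner–Heinloth), which allows extensions of the DVR. The finiteness hypotheses hold in our applications, since the stacks involved are of finite type over $S$.

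The only delicate point is this translation between the valuative notion and the topological one. If one insisted on working valuatively, the argument would run as follows. Given $\Spec K\to\Ycal$, lift it after a finite extension $K'/K$ to $\Xcal$; this is possible because $f$ is surjective, so the fibre over the point is nonempty and has a point over some field extension, which we may take finite by finite type. Then extend to a DVR $R'$ dominating $R$ using the criterion for $\Xcal$, and compose with $f$. This composite is exactly what the extended-DVR form of the existence criterion demands.
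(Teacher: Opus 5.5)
Your argument is correct and is essentially the paper's proof. The paper simply cites Stacks Project Lemma 03GN, whose proof is exactly your argument: base change, then use that surjectivity is preserved and that $g_T(Z)=(g_T\circ f_T)(f_T^{-1}(Z))$ is closed. Your translation between the topological and valuative notions of universal closedness is a reasonable addition that the paper leaves implicit; it is valid here because the stacks involved are of finite type, hence quasi-compact.
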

\begin{proof} See \cite[\href{https://stacks.math.columbia.edu/tag/03GN}{Lemma 03GN}]{stacks-project}.
\end{proof}

 \begin{lemma}\label{lem:descending_S-completeness}
    Let $\pi:\Xcal \rightarrow \Ycal$ be a universally closed, surjective morphism of locally noetherian algebraic stacks and suppose $\Ycal$ has affine diagonal. If $\Xcal$ is $S$-complete then $\Ycal$ is $S$-complete.
 \end{lemma}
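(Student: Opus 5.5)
To prove \Cref{lem:descending_S-completeness}, I will start from a diagram $\overline{ST}_R\smallsetminus 0\to\Ycal$ and produce an extension to $\overline{ST}_R$. By \cite[Proposition 3.44]{AHLH} (or its proof), since $\Ycal$ has affine diagonal, the extension is automatically unique if it exists. Moreover, S-completeness may be tested after an extension of DVRs $R\to R'$: the map $\overline{ST}_{R'}\to\overline{ST}_R$ is obtained by pulling back along $\pi\mapsto \pi'^e$ and then normalising, and an extension descends by uniqueness together with an fpqc-descent argument. So the strategy is to lift the given map to $\Xcal$ after such a base change, extend it there, and push the extension back down to $\Ycal$.

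The first step is lifting. The punctured stack $\overline{ST}_R\smallsetminus 0$ is the union of two copies of $\Spec R$, namely $s\neq 0$ and $t\neq0$, glued along $\Spec K$. The given map therefore amounts to two $R$-points $y_1,y_2$ of $\Ycal$ together with an isomorphism $\phi$ of their generic fibres. Since $\pi$ is surjective, after a finite extension of $K$ we can lift the generic point $y_{1,K}$ to a point $x_K\in\Xcal$. Since $\pi$ is universally closed, the existence part of the valuative criterion, again after extending $R$, lets us lift $y_1$ to an $R$-point $x_1$ of $\Xcal$ extending $x_K$, compatibly up to 2-isomorphism. In the same way, using $\phi$ to identify $\pi(x_K)$ with $y_{2,K}$, we get an $R$-point $x_2$ extending the same $x_K$ and lifting $y_2$. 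These data give a map $\overline{ST}_{R}\smallsetminus0\to\Xcal$ lying over the original map to $\Ycal$, up to a 2-isomorphism.

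The second step is to extend and push down. By S-completeness of $\Xcal$, the lifted map extends to $\overline{ST}_R\to\Xcal$. Composing with $\pi$ gives an extension $\overline{ST}_R\to\Ycal$, which agrees with the original map on the punctured locus via the 2-isomorphisms chosen above.

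I expect the main obstacle to be the 2-categorical bookkeeping in the lifting step. The valuative criterion for universally closed morphisms of stacks only provides lifts after field extensions and up to 2-morphisms, and one has to check that the gluing datum $\phi$ on $\Spec K$ is respected, so that the two lifts really glue to a map from the punctured $\overline{ST}_R$. I would handle this by lifting the whole point $\Spec K\to\Ycal$ first and then extending over each of the two branches separately, using the valuative criterion in its 2-commutative form \cite[Tag 0CLA]{stacks-project}. The remaining step is to verify that base change of DVRs is harmless, using the description of $\overline{ST}_{R'}$ as the normalisation of the pullback.
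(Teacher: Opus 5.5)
Your proposal follows the paper's proof: reduce to extensions of DVRs using that $\Ycal$ has affine diagonal (the paper cites \cite{AHLH} for this), lift the generic point by surjectivity, lift each of the two copies of $\Spec R$ in $\overline{ST}_R\smallsetminus 0=\Spec R\cup_{\Spec K}\Spec R$ by universal closedness, fill in using S-completeness of $\Xcal$, and compose with $\pi$. One correction to your side justification of the DVR-extension step: $\overline{ST}_R\times_R R'$ is already normal, and $\overline{ST}_{R'}$ is not its normalisation but maps to it by raising $s,t$ to the $e$-th power (up to a unit), which is an isomorphism only away from $0$, so you should simply invoke the DVR-extension criterion of \cite{AHLH} as the paper does rather than rederive it this way (also, without finite-type hypotheses the field extension in the lifting step need not be finite, which is harmless).
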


\begin{proof} Let $f^{\circ}:\overline{ST}_R\smallsetminus 0 \rightarrow \Ycal$ and let $f_K:\Spec K \rightarrow \Ycal$ be its restriction to the generic point of $\overline{ST}_R\smallsetminus 0$ (corresponding to $\pi\neq 0$).  Since $\Ycal$ has affine diagonal, it is enough to find a filling after an arbitrary extension \cite[Proposition 3.17(2)]{AHLH}. Moreover $\pi$ is surjective, thus we can lift the morphism $f_K$ to a morphism $g_K:\Spec K \rightarrow \Xcal$ up to an extension of the fraction field $K$. Notice that $\overline{ST}_R \smallsetminus 0 =\Spec R \cup_{\Spec K} \Spec R$, with $K$ the fraction field of $R$, therefore the universal closedness of $\pi$ allows us to lift the morphism $f^{\circ}$ to a morphism $g^{\circ}:\overline{ST}_R\smallsetminus 0 \rightarrow \Xcal$ up to a further extension of the DVR. The $S$-completeness of $\Xcal$ provides us with a morphism $g:\overline{ST}_R\rightarrow \Xcal$ which restricts to $g^{\circ}$ on the open subset $\overline{ST}_R\smallsetminus 0$. The morphism $f:=\pi \circ g: \overline{ST}_R \rightarrow \Ycal$ yields the desired filling of $f^{\circ}$.
\end{proof}

An analogous statement (see \Cref{prop:descending-theta-complete} below) holds for $\Theta$-completeness, but its proof is a bit more subtle. 
For a field $K$, let $j\colon\Spec(K) \to \Theta_K$ be the open immersion. We say that a morphism $f\colon\mathcal{X} \to \mathcal{Y}$ verifies \emph{the valuative criterion of $\Theta$-surjectivity} \cite[Remark 3.26]{AHLH} if for any field $K$, any commutative diagram of solid arrows
\[
\begin{tikzcd}
\Spec K \arrow[d, "j"] \arrow[r, "x"] & \Xcal \arrow[d, "f"] \\
\Theta_K \arrow[r]  \ar[ru, dashed]          & \Ycal               
\end{tikzcd}
\]
can be filled in. Recall also that the $n$-th root stack of $\Theta_K$ along $B\Gm$ is the map $\Theta_K\to\Theta_K$ obtained by pulling back the map $B\Gm\to B\Gm$ induced by the homomorphism $\Gm\to\Gm,$ $x\mapsto x^n$.
\begin{lemma}\label{lem:theta-surjectivity}
    Let $f:\Xcal \rightarrow \Ycal$ be a universally closed morphism of finite type between locally noetherian locally reductive algebraic stacks over $\mathbb{Q}$ with affine diagonal. Then $f$ verifies the valuative criterion of $\Theta$-surjectivity up to 
    \begin{itemize}
        \item taking extensions of the field $K$,
        \item taking the $n$-th root stack of $\Theta_K$ along $B\Gm$.
    \end{itemize}
\end{lemma}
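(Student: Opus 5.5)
Given $x\colon\Spec K\to\Xcal$ and $g\colon\Theta_K\to\Ycal$ with $g\circ j=f\circ x$, we want to lift the point $g(0)$ of $\Ycal$ together with the specialisation $g(1)\rightsquigarrow g(0)$ to $\Xcal$. The plan is to reduce to the classical Hilbert--Mumford picture via local quotient presentations, then use universal closedness to produce a limit point and a one-parameter subgroup.

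\textbf{Step 1: localise.} Since $\Ycal$ is locally reductive with affine diagonal, we can take the closed point $y_0=g(0)$, possibly after extending $K$, and choose an étale neighbourhood of $\Ycal$ of the form $[\Spec A/G_{y_0}]\to\Ycal$ with $G_{y_0}$ linearly reductive. By \'etale-local nature of the question, and because $\Theta_K$ has a unique closed point, we may lift $g$ to this quotient up to extending $K$ (the standard lifting of maps from $\Theta$ through stabiliser-preserving étale neighbourhoods, cf.\ \cite{AHLH}). Base changing $f$, we may assume $\Ycal=[\Spec A/G]$ with $G$ linearly reductive. Then $g$ corresponds to a point $y\in \Spec A$ and a cocharacter $\lambda\colon\Gm\to G$ with $\lim_{t\to0}\lambda(t)\cdot y$ existing.

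\textbf{Step 2: reduce to the DVR.} Restrict $g$ to $\Aaff^1_K\to\Theta_K$, i.e.\ the composite $\Spec K[t]\to\Spec A\to\Ycal$; localising at $t=0$ gives a map $\Spec R\to\Ycal$ from the DVR $R=K[t]_{(t)}$ whose generic point $\Spec K(t)$ is $\lambda(t)\cdot y$, lifted to $\Xcal$ by $\lambda(t)$ acting on (the pullback of) $x$. Universal closedness of $f$ gives, after a finite extension of $K(t)$, a lift $\Spec R'\to\Xcal$ with closed point $x_0$ over $y_0$. The extension $R\subset R'$ can be taken of the form $t=s^n$ up to extending the residue field $K$, since in characteristic $0$ tame ramification suffices; this is the origin of the $n$-th root stack in the statement.

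\textbf{Step 3: equivariantise.} This is the main obstacle: the lift over $\Spec R'$ is not a priori $\Gm$-equivariant, so it does not yet define a map from (a root stack of) $\Theta_K$. I would pass to the fibre $\Xcal\times_\Ycal\Spec A$, locally a quotient $[\Spec B/H]$ by linearly reductive groups after again applying local structure at $x_0$ (it is finite type, affine diagonal, locally reductive). In the affine $H$-scheme, the lifted DVR gives a point $\xi$ over $K((s))$ specialising to the orbit of $x_0$; by the Hilbert--Mumford/Iwahori style argument (a $K((s))$-point of $H$ can be written via Cartan decomposition $H(K[[s]])\,\mu(s)\,H(K[[s]])$), the specialisation can be realised by a cocharacter $\mu$ of $H\times G$ compatible with $\lambda^n$, which gives the equivariant map $[\Aaff^1/\Gm]\to\Xcal$ from the $n$-th root stack of $\Theta_K$ extending $x$. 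Compatibility with $g$ follows since both are determined by the cocharacter and the generic point.
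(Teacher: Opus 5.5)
Your Step 3 carries essentially all the content of the lemma, and it does not go through as sketched.

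\begin{itemize}
\item The chart $[\Spec B/H]$ is an \'etale chart of $\Xcal\times_\Ycal\Spec A$, which carries no $G$-action. So a ``cocharacter of $H\times G$'' has no meaning there.
\item The generic point of your lifted DVR lies over the point $\lambda(s^n)\cdot y$ of $\Spec A$, which is not constant unless $\lambda$ fixes $y$. So it is not in general an $H(K((s)))$-translate of a $K$-point of $\Spec B$. That is exactly what the Cartan-decomposition proof of the Hilbert--Mumford criterion needs.
\item That argument also requires the limit to lie in a closed orbit. Your $x_0$ is just whatever limit the valuative criterion produces, so it need not be closed, and the local structure theorem at $x_0$ need not apply.
\item Most seriously, your last sentence is exactly what has to be proved. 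Maps $\Theta_K\to\Ycal$ are not determined by their generic point: for $\Ycal=B\Gm$ they are the line bundles $\OO(d)$ on $\Theta_K$, all trivial on $\Spec K$. So even given some $F\colon\Theta_K\to\Xcal$ extending $x$, nothing in your argument forces $f\circ F\simeq g\circ[n]$.
\item Step 1 has a smaller problem of the same kind. $g(0)$ need not be closed in $\Ycal$, and its stabiliser need not be linearly reductive; the point $e_1$ of $[\Aaff^2/\mathrm{GL}_2]$ is an example. So a chart $[\Spec A/G_{y_0}]$ with $G_{y_0}$ linearly reductive need not exist.
\end{itemize}

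The missing idea is to base change along $g$ itself rather than along a chart of $\Ycal$. In $\Xcal\times_\Ycal\Theta_K$, universal closedness gives a closed point $x_0$ over $B\mathbb{G}_{m,K}$ in the closure of $x$. If this fibre product is locally reductive, \cite[Proposition 3.24]{AHLH} realises $x\rightsquigarrow x_0$ by a map $\Theta_K\to\Xcal\times_\Ycal\Theta_K$. This is the Hilbert--Mumford step you were trying to do by hand, but in the right stack, and compatibility with $g$ is automatic because the map lands in the fibre product.

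The only ambiguity left is the composite $\Theta_K\to\Theta_K$. It is surjective and generically an isomorphism, so it is the $n$-th root map by $\underline{\mathrm{Map}}(\Theta,\Theta)=\bigsqcup_{n\in\ZZ}\Theta^{[n]}$. This, not the ramification of $R\subset R'$, is where the root stack comes from.

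The real technical work is then proving that $\Xcal\times_\Ycal\Theta_K$ is locally reductive. The paper uses the local structure theorem to reduce this to linearly reductive stabilisers at closed points of $\Xcal\times_\Ycal B\mathbb{G}_{m,K}$. It proves that using the fact that $B\mathbb{G}_{m,K}\to\Ycal$ is universally cohomologically affine, which holds because $\Ycal$ has affine diagonal.
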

\begin{remark}
The main new feature of this setup when compared with \Cref{lem:descending_S-completeness} is precisely the fact that the statement holds up to taking root stacks. This is not unexpected: indeed, the universal $n$-th root stack $[n]:\Theta_K \rightarrow \Theta_K$ is universally closed but not $\Theta$-surjective. In particular, the same strategy adopted in the proof of  \Cref{lem:descending_S-completeness} cannot apply.
\end{remark}
\begin{proof}
Firstly, we can assume $K$ is algebraically closed since we want to prove the criterion up to extensions of the field. Consider the diagram
$$\begin{tikzcd}
\Spec K \arrow[d] \arrow[r,"x"] & \Xcal \arrow[d, "f"] \\
\Theta_K \arrow[r, "y\rightsquigarrow y_0"]           & \Ycal               
\end{tikzcd}$$
The universal closedness of $f$ implies that the closure of $x$ in $\Xcal\times_{\Ycal}\Theta_K$ intersects the special fiber over $B\mathbb{G}_{m,K} \subset \Theta_K$ non-trivially and call $x_0$ a closed point in the intersection. Assume for the moment that $\Xcal \times_{\Ycal} \Theta_K$ is locally reductive \cite[Definition 2.5]{AHLH}. As it is also of finite type over $\Theta_K$, there exists a morphism $\Theta_K \to \Xcal\times_{\Ycal}\Theta_K$ realizing the specialization $x \rightsquigarrow x_0$, see \cite[Proposition 3.24]{AHLH}. 
This induces a commutative diagram
$$
\begin{tikzcd}
\Theta_K  \arrow[d, "g"] \arrow[r, "x \rightsquigarrow x_0"] & \Xcal \arrow[d, "f"] \\
\Theta_K \arrow[r, "y \rightsquigarrow y_0"]                 & \Ycal               
\end{tikzcd}
$$
where $g$ is a surjective morphism which is an isomorphism on the generic fiber. Contrary to the case of spectra of discrete valuation rings, where a birational morphism is necessarily an isomorphism, $g$ need not be an isomorphism; nevertheless, we can describe it precisely. The Hom-stack $\underline{\rm Map}(\Theta, \Theta)$ is a disjoint union $\bigsqcup_{n \in \mathbb{Z}} \Theta^{[n]}$ where (see Section 2.4 of \cite{AHLH})
\begin{itemize}
    \item for $n\geq 0$ we have that $\Theta^{[n]}\simeq\Theta$ parametrizes morphisms induced by $t\mapsto \lambda t^n$,
    \item for $n<0$ we have that $\Theta^{[n]}\simeq B\Gm$: the corresponding morphism can be described as the composition $\Theta\rightarrow B\Gm\rightarrow B\Gm \hookrightarrow \Theta$, where the central morphism is the morphism of classifying stacks induced by the $n$-th power $\Gm \rightarrow \Gm$.
\end{itemize}
Since the second case does not give rise to a surjective morphism, there exists a positive integer $n$ such that $g$ can be identified with the universal $n$-th root stack $[n]:\Theta \rightarrow \Theta$, corresponding to the case $\lambda\neq 0$.

To complete the proof, we still need to resolve the initial technical issue, namely that $\Xcal \times_{\Ycal} \Theta_K$ has to be locally reductive. Since we are in characteristic zero, the \'etale local structure theorem (see \cite[Theorem 1.1]{Luna}) reduces the problem to showing that $\Xcal \times_{\Ycal} B \mathbb{G}_{m,K}$ has linearly reductive stabilizers at closed points.

First of all, we notice that any morphism
    $$  B \mathbb{G}_{m,K} \longrightarrow \Ycal$$
    is universally cohomologically affine, i.e. for every $T\rightarrow \Ycal$, the base change $T \times_{\Ycal}  B \mathbb{G}_{m,K}\rightarrow T$ is cohomologically affine (see \cite[Definition 3.1]{AlperGood}). Indeed, we have $T \times_{\Ycal}  B \mathbb{G}_{m,K} \simeq (T\times B \mathbb{G}_{m,K}) \times_{\Ycal \times \Ycal} \Ycal$; as the diagonal $\Ycal \to \Ycal\times\Ycal$ is affine, $T\times_\Ycal B \mathbb{G}_{m,K}\to T\times  B \mathbb{G}_{m,K}$ is also affine. The composition $T\times_\Ycal B \mathbb{G}_{m,K}\to T\times  B \mathbb{G}_{m,K} \to T$ is thus cohomologically affine if (and only if) the projection $T\times  B \mathbb{G}_{m,K}\to T$ is cohomologically affine, which follows from $ B \mathbb{G}_{m,K} \to \Spec(K)$ being cohomologically affine.

    To conclude, we will show that $\Xcal \times_{\Ycal}  B \mathbb{G}_{m,K}$ has linearly reductive stabilizers at closed points. Given $x$ a closed point of $\Xcal \times_{\Ycal}B \mathbb{G}_{m,K}$ and $x_0 \in \overline{\{x\}}$ a closed point in $\Xcal$, we can consider an \'etale local quotient presentation $\mathcal{W}=[\Spec(A)/\on{GL}_n] \rightarrow \Xcal$ around $x_0$, where $\mathcal{W}$ has affine good moduli space $q:\mathcal{W} \to W:=\Spec(A^{\on{GL}_n})$ \cite[Theorem 1.1]{AHR-local}. Since $p_1\colon \mathcal{W} \times_{\Ycal} B\Gm \rightarrow \mathcal{W}$ is cohomologically affine, the source too has an affine good moduli space given by $\Spec({q_*p_1}_* \Ocal_{\mathcal{W} \times_{\Ycal} B \mathbb{G}_{m,K}})$.
    
    Since the local quotient presentation is \'etale, $x$ is in the image of the local presentation and thus there exists a closed point $w$ of $\mathcal{W} \times_{\Ycal}B \mathbb{G}_{m,K}$ which maps to $x$. Moreover, $\mathcal{W} \times_{\Ycal}B \mathbb{G}_{m,K}$ has an affine good moduli space, thus $w$ has linearly reductive automorphism group and, because the morphism $G_w \rightarrow G_x$ is \'etale, we get that $G_x$ is also linearly reductive, since it differs from $G_w$ by a finite group and we are working in characteristic zero.
\end{proof}
\begin{remark}
    We want to stress that it is not surprising that the root stack construction appears in the valuative criterion. Indeed, even in the classical case with DVRs, this was already pointed out in \cite{BreVis}. Moreover, one does not really need all the technical hypotheses in the previous lemma, specifically the finite type condition or the characteristic assumption. The statement of the lemma can be improved but one would need the theory of topological moduli spaces, which will appear in a forthcoming paper by David Rydh. 
\end{remark}
\begin{proposition}\label{prop:descending-theta-complete}
     Let $\pi:\Xcal \rightarrow \Ycal$ be a universally closed, surjective morphism of finite type of locally noetherian locally reductive algebraic stacks over $\mathbb{Q}$ with affine diagonal. If $\Xcal$ is $\Theta$-complete then $\Ycal$ is $\Theta$-complete.
\end{proposition}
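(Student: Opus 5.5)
We follow the strategy of \Cref{lem:descending_S-completeness}, with \Cref{lem:theta-surjectivity} in place of the DVR lifting argument. Let $f^\circ\colon\Theta_R\smallsetminus 0\to\Ycal$ be given, with $R$ a DVR with fraction field $K$ and residue field $k$. Since $\Ycal$ has affine diagonal, \cite[Proposition 3.17]{AHLH} (and its $\Theta$-analogue there) lets us find a filling after passing to an extension of $R$. It also lets us work up to replacing $\Theta_R$ by a root stack along $0$: a filling of the pulled-back diagram descends, because the root stack map is a good-moduli-type cover that is an isomorphism away from $0$, and maps into a stack with affine diagonal extend uniquely in codimension two. We will need to check this last descent claim carefully.

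Recall that $\Theta_R\smallsetminus 0=\Spec R\cup_{\Spec K}\Theta_K$. First, by surjectivity of $\pi$, we lift the generic point $\Spec K\to\Ycal$ to $\Xcal$ after a field extension. Second, by universal closedness of $\pi$ (the existence part of the valuative criterion), we extend the lift over $\Spec R$ after extending $R$. Third, we apply \Cref{lem:theta-surjectivity} to the diagram $\Spec K\to\Xcal$, $\Theta_K\to\Ycal$. This produces a lift $\Theta_K\to\Xcal$ after extending $K$ and passing to an $n$-th root stack of $\Theta_K$ along $B\Gm$.

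The main obstacle is making these choices compatible over a single base. The root stack of $\Theta_K$ along $B\Gm$ must be realised as the restriction of the root stack of $\Theta_R$ along the divisor $\{t=0\}$; concretely, this is the map $\Theta_R\to\Theta_R$, $t\mapsto t^n$. Moreover, the field extension of $K$ must come from an extension of DVRs $R\subset R'$, which we get by taking the integral closure of $R$ in the finite extension and localising. After these replacements we obtain $g^\circ\colon\Theta_{R'}\smallsetminus 0\to\Xcal$. The two pieces agree on $\Spec K'$ because both restrict to the chosen lift $x$, and gluing is legitimate since $\Theta_{R'}\smallsetminus0$ is a pushout of the two opens. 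The composite $\pi\circ g^\circ$ is then the pullback of $f^\circ$ under the root-and-extension map.

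Finally, $\Theta$-completeness of $\Xcal$ gives a filling $g\colon\Theta_{R'}\to\Xcal$, and $\pi\circ g$ fills the pulled-back diagram. This filling descends to the original diagram by the reductions in the first paragraph. Uniqueness is automatic: $\Ycal$ has affine, hence separated, diagonal, so fillings are unique \cite[Proposition 3.17]{AHLH}.
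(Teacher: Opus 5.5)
Your lifting steps match the paper's. You lift over $\Spec R$ using surjectivity and universal closedness of $\pi$, lift over $\Theta_K$ using \Cref{lem:theta-surjectivity}, glue along $\Spec K$, and fill using $\Theta$-completeness of $\Xcal$. But these steps only produce a filling $F\colon\Theta_R\to\Ycal$ of $f^\circ\circ[n]$, where $[n]\colon\Theta_R\to\Theta_R$ is the $n$-th root stack along the divisor $\{t=0\}=B\mathbb{G}_{m,R}$. The step you postpone, descending $F$ to a filling of $f^\circ$ itself, is the genuinely subtle part, and your justification for it does not work, for three reasons.
\begin{enumerate}
\item $[n]$ is an isomorphism only over $\{t\neq0\}\simeq\Spec R$, not over $\Theta_R\smallsetminus 0$. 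Over $\Theta_K\subset\Theta_R\smallsetminus 0$ it is the non-trivial root stack; if it were an isomorphism there, \Cref{lem:theta-surjectivity} would not need root stacks at all.
\item The claim that maps into a stack with affine diagonal extend uniquely in codimension two is false as an existence statement. Affine diagonal gives uniqueness of fillings, but existence across the codimension-two point $0$ is exactly $\Theta$-completeness. For instance, $\Gcal_{1,2}$ has affine diagonal and is not $\Theta$-complete. Your claim would prove the proposition with no hypothesis on $\pi$ at all.
\item The root stack map is a relative coarse (good) moduli space, but that gives a universal property only for maps to algebraic spaces, not to $\Ycal$.
\end{enumerate}

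What is missing is an argument that $F$ factors through $[n]$ compatibly with $f^\circ$. The paper gets this from mapping stacks. Regard $F$ as an $R$-point of ${\rm Map}(\Theta,\Ycal)$; its generic fibre is $[n]_*$ of the $K$-point $f^\circ|_{\Theta_K}$. Since $[n]_*\colon{\rm Map}(\Theta,\Ycal)\to{\rm Map}(\Theta,\Ycal)$ is a closed embedding \cite[Proposition 1.3.11]{instability}, the valuative criterion of properness gives, after extending $R$, some $F'\colon\Theta_R\to\Ycal$ with $F'\circ[n]=F$ and $F'|_{\Theta_K}=f^\circ|_{\Theta_K}$. Agreement with $f^\circ$ on $\{t\neq0\}$ then holds because $[n]$ is an isomorphism there.

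Alternatively, you could show that the subgroup $\mu_n\subset\Gm$ of the stabiliser of $0$ acts trivially through $F$. It does so over $B\mathbb{G}_{m,K}$, hence at $0$ by a closedness argument using separatedness of the inertia of $\Ycal$. You would then still need a factorisation statement for maps from a root stack to an Artin stack, which is essentially what the closed-embedding result supplies.
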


\begin{proof}
    Let $f:\Theta_R   \smallsetminus 0 \rightarrow \Ycal$ be any morphism where $R$ is a DVR. Notice that $\Theta_R \smallsetminus 0$ is the pushout of $\Theta_K$ and $\Spec R$ along $\Spec K$. Therefore we can lift the morphism $\Spec R \rightarrow \Ycal$ to a morphism $\Spec R \rightarrow \Xcal$, since the morphism is surjective and universally closed, up to extending the DVR. Moreover, we can apply \Cref{lem:theta-surjectivity} and lift the morphism $\Theta_K\rightarrow \Ycal$ to a morphism $\Theta_K \rightarrow \Xcal$ up to possibly taking further extensions of $K$ (and thus of the DVR) and/or a root stack construction. Thus we found a morphism $g:\Theta_R \smallsetminus 0\to \Xcal$ which lifts $f$ up to composing with the root stack of $\Theta_K$. Following the strategy of \Cref{lem:descending_S-completeness}, the $\Theta$-completeness of $\Xcal$ gives us the filling $\Theta_R \rightarrow \Xcal$ of $g$, and postcomposing with $\pi$ gives the filling of $f$ up to the root stack construction. 

    We have proved the following valuative criterion for $\Ycal$: for every discrete valuation ring $R$ and every diagram 
    $$
    \begin{tikzcd}
\Theta_R \smallsetminus 0 \arrow[rd, "f"] \arrow[d, hook] &       \\
\Theta_R                                             & \Ycal
\end{tikzcd}
$$ 
there exists a positive integer $n$ and a morphism $F:\Theta_R \rightarrow \Ycal$ such that the following diagram
$$ 
\begin{tikzcd}
\Theta_R \smallsetminus 0 \arrow[d, hook] \arrow[r, "{[n]}"]  & \Theta_R \smallsetminus 0 \arrow[rd, "f"] \arrow[d, hook] &       \\
\Theta_R \arrow[rr, "F"', bend right] \arrow[r, "{[n]}"] & \Theta_R                                             & \Ycal
\end{tikzcd}
$$
commutes, up to extensions of DVRs. Notice that, by restricting the upper horizontal morphism to the open $\Theta_K \subset \Theta_R \smallsetminus 0$, we get the following commutative diagram of solid arrows
$$
\begin{tikzcd}
\Spec K \arrow[d] \arrow[r, "{[f\vert_{\Theta_K}]}"] & {{\rm Map}(\Theta, \Ycal)} \arrow[d, "{[n]_*}"] \\
\Spec R \arrow[r, "{[F]}"]    \ar[ur, dashed, "{[F']}"]                       & {{\rm Map}(\Theta, \Ycal)} .                    
\end{tikzcd}
$$
where $[F]$ (respectively $[f\vert_{\Theta_K}]$) is the $R$-point representing $F\colon \Theta_R\to \mathcal{Y}$ (respectively the $K$-point representing $f\vert_{\Theta_K}\colon \Theta_K \to \mathcal{Y}$). The morphism $[n]_*\colon {\rm Map}(\Theta,\Ycal) \rightarrow {\rm Map}(\Theta,\Ycal)$ is induced by the universal $n$-th root stack $[n]: \Theta\rightarrow \Theta$ and it is a closed embedding \cite[Proposition 1.3.11]{instability}. Therefore, by the valuative criterion of properness, we can find, up to extending the DVR again, a dashed arrow as above which corresponds to a morphism $F':\Theta_R \rightarrow \Ycal$  making the following diagram commute:
$$ 
\begin{tikzcd}
\Theta_R \smallsetminus 0 \arrow[d, hook] \arrow[r, "{[n]}"]  & \Theta_R \smallsetminus 0 \arrow[rd, "f"] \arrow[d, hook] &       \\
\Theta_R \arrow[rr, "F"', bend right] \arrow[r, "{[n]}"] & \Theta_R           \arrow[r, "F'"]                                  & \Ycal.
\end{tikzcd}
$$
This concludes the proof.
\end{proof}
As often happens, it is useful to be able to check a local criterion by assuming that the image of the generic point of a DVR lands in a dense open substack. Below, we generalize a well-known criterion for Deligne--Mumford stacks to the setting of algebraic stacks. 

Given an algebraic stack, let us denote by ${\rm Irr}(X)$ the set of \emph{minimal} points, i.e. the generic points of its irreducible components. 

 \begin{definition}\label{def:gen univ cl}
     Let $\Xcal \to \Ycal$ be a morphism of locally noetherian algebraic stacks. We say that the morphism $f$ is \emph{generically closed} if for any discrete valuation ring $R$ with fraction field $K$, and  for any commutative diagram of solid arrows
    \[
    \begin{tikzcd}
    \Spec(K) \arrow[d, hook] \arrow[r, "x"] & \Xcal \arrow[d,"f"] \\
    \Spec(R) \arrow[r] \arrow[ru, dashed]   & \Ycal          
    \end{tikzcd}
    \]
    with $x \in {\rm Irr}(\vert \Xcal \vert)$, there exists, up to an extension of $R$, a dashed arrow that fills in the diagram.
 \end{definition}
 \begin{lemma}\label{lem:generic_univ_closed}
  Let $f:\Xcal \to \Ycal$ be an $S$-complete morphism between locally noetherian algebraic stacks which is of finite presentation and has affine diagonal. Suppose that $f$ is $\Theta$-complete and generically closed. Then $\Xcal \to \Ycal$ is universally closed.
 \end{lemma}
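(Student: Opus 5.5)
We reduce to the existence part of the valuative criterion and then push a lift from a generic point to an arbitrary one, using $\Theta$- and $S$-completeness to move along specialisations.

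Since $f$ is of finite presentation and the source is locally noetherian, universal closedness can be checked by the existence part of the valuative criterion. We may also extend DVRs freely. So take a DVR $R$ with fraction field $K$ and a square with $x\colon\Spec K\to\Xcal$ and $\Spec R\to\Ycal$; we must produce a lift after extending $R$. The point $x$ need not be minimal. The plan is to move it to a minimal point, lift there using generic closedness, and transport the lift back.

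\textbf{Step 1 (approximating $x$ by a generic point).} Choose a minimal point $\eta\in{\rm Irr}(|\Xcal|)$ specialising to $x$. Since $\Xcal$ is locally noetherian, we can realise this specialisation by a DVR. Combining it with the given $R$-family, we build a two-dimensional regular local base, for instance $\Spec R[[u]]$ or a DVR of a blow-up. The generic point of this base maps to $\eta$. After possibly replacing by a DVR $R'$ dominating this base, we obtain a family $\Spec K'\to\Xcal$ landing in $\eta$ whose image in $\Ycal$ extends over $\Spec R'$, compatibly with the original data. Generic closedness of $f$ then gives a lift $\Spec R'\to\Xcal$ up to extension. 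Specialising this lift back, via closure in the fibre product $\Xcal\times_\Ycal\Spec R$, gives a point $x_0'$ of $\Xcal$ over the closed point of $\Spec R$. This point lies in the closure of $x$ as taken over $\Spec R$. By this closure argument we obtain some $R$-point $\Spec R\to\Xcal$ over $\Ycal$ whose generic point $x'$ lies in the fibre of $f$ over $\Spec K$. However, $x'$ need not equal $x$: it may be a different point of $\Xcal_K$ that becomes identified with $x$ only after a $\Theta$-degeneration, since $x'$ is obtained from a specialisation of $x$ within the fibre.

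\textbf{Step 2 (correcting $x'$ to $x$).} The fibre $\Xcal_K=\Xcal\times_\Ycal\Spec K$ is of finite type with affine diagonal. Inside it we have $x$ and $x'$, with $x'$ in the closure of $x$, or with both specialising to a common point. By \cite[Proposition 3.24]{AHLH}, after the local reductivity reduction used in the proof of \Cref{lem:theta-surjectivity}, such specialisations are realised by maps $\Theta_K\to\Xcal_K$, $x\rightsquigarrow x'$, possibly after field extension and root stacks. We now have two pieces of data: a map $\Theta_K\to\Xcal$ from $x$ to $x'$, and an $R$-point through $x'$. These glue to a map $\Theta_R\smallsetminus 0\to\Xcal$ lying over the composite $\Theta_R\to\Spec R\to\Ycal$. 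The gluing orientation may force $\overline{ST}_R\smallsetminus 0$ instead, according to which direction the degeneration points. $\Theta$-completeness (respectively $S$-completeness) of $f$ extends this map over $\Theta_R$ (respectively $\overline{ST}_R$). Restricting to the copy of $\Spec R$ at $t=1$, respectively $s=1$, gives an $R$-point whose generic point is $x$, lying over the given map to $\Ycal$. This is the desired lift.

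\textbf{Main obstacle.} The delicate step is Step 1 together with the bookkeeping in Step 2. We must ensure that the specialisation from $\eta$ to $x$ can be spread over the given DVR so that generic closedness actually applies. We also need the resulting $x'$ to be related to $x$ by specialisations that are realisable by $\Theta$- or $ST$-maps, rather than by an arbitrary zig-zag. I would first try to arrange this using the closure of $\eta$ in $\Xcal\times_\Ycal\Spec R$ and a noetherian induction on dimension. Failing that, I would argue by contradiction: take the image of the closure of $x$ in $\Spec R$, and use $\Theta$/$S$-completeness to show that the set of points of the fibre which lift is both open and closed.
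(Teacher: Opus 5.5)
Your proposal has a genuine gap, and it is more than bookkeeping. In Step~1, lifting a DVR through the minimal point $\eta$ gives a point of $\Xcal$ over the image of the closed point of $\Spec R$ that lies in the closure of $\eta$. But $\eta$ does not in general lie over the given $\Spec K\to\Ycal$: its image is only a generisation of that of $x$. So $\eta$ is not a point of $\Xcal\times_\Ycal\Spec R$ at all, and nothing places the point you obtain in the closure of $x$ there. That is exactly what the valuative criterion at $x$ asks for, so the $R$-point through some $x'\in\Xcal_K$ related to $x$ is never actually produced.

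Step~2 then runs in the wrong direction. A morphism $\Theta_R\smallsetminus 0\to\Xcal$ is an $R$-point together with a $\Theta_K$-family whose \emph{open} point is the generic fibre of that $R$-point. So the $R$-point through $x$ at $t\neq 0$ is part of the input, and $\Theta$-completeness only transports extendability from $x$ down to its isotrivial degenerations, never back up. Likewise $\overline{ST}_R\smallsetminus 0$ is just two $R$-points with a common generic fibre, so no $\Theta_K$-family can be glued into it. Conversely, if you really had $x'\in\overline{\{x\}}$ inside $\Xcal_K$, you would need no completeness at all, since then $\overline{\{x'\}}\subseteq\overline{\{x\}}$ in $\Xcal\times_\Ycal\Spec R$.

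The underlying problem is that the hypotheses must enter globally, as separatedness. Glue $\mathrm{Bl}_{(\infty,0)}(\PP^1\times\Aaff^1)$ to $(\PP^1\times\Aaff^1)\smallsetminus\{(\infty,0)\}$ along their common open $\Aaff^1\times\Aaff^1$. The result is a non-separated scheme of finite type over $\Aaff^1$ with the following properties:
\begin{itemize}
\item it has affine diagonal and is $\Theta$-complete, as any morphism of schemes is;
\item it is generically closed, because the blow-up is a proper open containing the generic point;
\item it is not closed, because $\{\infty\}\times(\Aaff^1\smallsetminus\{0\})$ in the second chart is closed with image $\Aaff^1\smallsetminus\{0\}$.
\end{itemize}
Your Step~1 uses only generic closedness, and Step~2 never uses $S$-completeness as a separatedness statement, so the argument cannot be completed along these lines.

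The missing idea, which is the paper's route, is to use $\Theta$- and $S$-completeness only to produce a separated algebraic space:
\begin{enumerate}
\item Reduce to $\Ycal$ a scheme, using that generic closedness is stable under universally open base change.
\item Apply \cite[Theorem A]{AHLH} to get a good moduli space $\pi\colon\Xcal\to X$ with $X\to\Ycal$ separated.
\item Since $\pi$ is universally closed and surjective, it suffices to show that $X\to\Ycal$ is universally closed.
\item Generic closedness descends to $X$: every generic point of $X$ is the image of a generic point of $\Xcal$, so you lift, fill in $\Xcal$, and compose with $\pi$.
\item For separated morphisms of finite type of algebraic spaces, generic closedness implies universal closedness \cite[Exercise 5.6.7]{Alper-book}.
\end{enumerate}
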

 \begin{proof}
    Since universal closedness is a property local on the target, we can assume that $\Ycal$ is a scheme. Notice that being generically closed is stable under base change along universally open morphisms.
    
    Thus, there exists a good moduli space $\Xcal \overset{\pi}{\to} X \to \Ycal$, with $g:X\to\Ycal$ separated (see \cite[Theorem A]{AHLH}). It is therefore enough to prove that $g$ is universally closed. Observe that since $\pi$ is surjective, every generic point of an irreducible component of $\Ycal$ is the image of a generic point of an irreducible component of $\Xcal$, therefore $g$ is generically closed. Indeed, given a diagram of solid arrows 
    \begin{equation} \begin{tikzcd}
    \Spec(K) \arrow[d, hook] \arrow[r]  & X \arrow[d] \\
    \Spec(R) \arrow[r]     & \Ycal,         
    \end{tikzcd}\end{equation}
    up to taking an extension of $R$, we can lift $\Spec(K)\to X$ to a morphism $\Spec(K)\to \Xcal$ which lands in ${\rm Irr}(\lvert \Xcal\rvert )$, and, by hypothesis, we can find a filling $\Spec(R)\to \Xcal$. Postcomposing with $\pi$, we get the desired filling.
    Finally, the separatedness of $X\to\Ycal$ ensures that generic closedness implies universal closedness in the algebraic space case \cite[Exercise 5.6.7]{Alper-book}.
\end{proof}
\begin{remark}
    In the previous Lemma, the converse is also true. Namely, universal closedness and $S$-completeness imply $\Theta$-completeness. We do not provide a proof since we do not make use of such a statement, but it is a consequence of the theory of topological moduli spaces developed in forthcoming work of David Rydh. 
\end{remark}

\section{Preliminaries on Gorenstein curves of genus one}

\begin{definition}
    We denote by $\Gcal_{1,n}$ the Artin stack of proper, reduced, Gorenstein curves $C$ of arithmetic genus one with $n$ distinct, smooth sections $p_1,\ldots,p_n$, such that the log canonical bundle $\omega_C(p_1+\ldots+p_n)$ is ample.
\end{definition}

Smyth \cite[Appendix A]{Smyth} classified all isolated Gorenstein singularities of genus one.
\begin{definition}
   Let $k$ be a field of characteristic different from $2,3$. A $k$-point $p$ of a curve $C$ is called an \emph{elliptic} $m$\emph{-fold point} if the analytic germ of $C$ at $p$ is one of the following:
    \[\hat{\calO}_{C,p}\simeq 
    \left\{ \begin{matrix}
       &k [\![ x,y ]\!]/(y^2-x^3) & m=1 &\text{ordinary cusp, } A_2 \\
       &k [\![  x,y ]\!]/(y^2-yx^2) & m=2 &\text{ordinary tacnode, }A_3 \\
       &k [\![  x,y ]\!]/(x^2y-xy^2) & m=3 &\text{planar triple point, }D_4 \\
       &k [\![  x_1,\ldots,x_{m-1} ]\!]/I_m & m\geq 4 &\text{$m$-general lines through the origin of }\Aaff^{m-1}
    \end{matrix}\right.
    \]
    where $I_m$ is the ideal generated by the binomials $x_ix_j-x_ix_h$ for all $i\neq j,h\in[m-1]$.
\end{definition}
\begin{definition}
  A Gorenstein curve of genus one is called \emph{minimal} if it does not contain any disconnecting node. We denote by $\Min{n}\subset\Gcal_{1,n}$ the open substack of minimal curves; it is the complement of the divisor of rational tails (with at least two markings).
 \end{definition}
\begin{remark}\label{rmk:minimal}
    Minimal curves admit a normal form, yielding explicit equations for $\Min{n}$ \cite{LekiliPolishchuk}: denoting by $[C_0]$ the point corresponding to the curve with an elliptic $n$-fold singularity, it turns out that $\Min{n}\smallsetminus\{[C_0]\}$  can always be identified with the projectivisation of the base of the miniversal deformation of the elliptic $n$-fold point \cite{Stevens}. For $n\leq 5$, this is a weighted projective stack, a fact that can be harnessed to describe the relationship between $\oM_{1,n}$ and Smyth's alternative compactifications as a sequence of weighted blow-ups and downs \cite{BDL4}. We wonder whether such a description could be pushed further to all $\oM_{1,n}(c)$.
\end{remark}

A Gorenstein curve of genus one may only have nodes and at most one elliptic $m$-fold point for singularities. It can always be decomposed  into a minimal elliptic subcurve (the \emph{core}) and a forest of rational trees, nodally attached to it \cite[\S3.1]{Smyth}. The core may be a smooth or nodal elliptic curve, a circle of nodally attached $\PP^1$, or an elliptic $m$-fold point. In any case, the dualising bundle of the core is trivial \cite[\S2.2]{Smyth}. Moreover, if $C$ is the elliptic $m$-fold singularity $q$, and if $B$ denotes a branch of the singularity, then $B\simeq\PP^1$ and $\omega_{C}\vert_B=\omega_B(2q)\simeq\OO_{\PP^1}$. Ampleness of the log canonical bundle can therefore be phrased combinatorially by saying that every branch of an elliptic singularity contains at least one more special point (either a node or a marking), in addition to the usual Deligne--Mumford stability for the rational trees.

\begin{remark}
    Ampleness of the log canonical bundle does not imply finiteness of the automorphism group, as opposed to the case of prestable curves.
\end{remark}
    
    A log-canonically polarised Gorenstein curve $C$ of arithmetic genus one has infinite automorphism group $\Gm$ 
    if and only if it contains an elliptic singularity each of whose branches contains exactly one special point: indeed, if $C$ is nodal then it is stable and it has finite automorphism group. If $C$ contains an elliptic singularity $q$ with branches $C_1,\ldots,C_m$, the automorphism group of $C$ is the subgroup of the automorphism group of the seminormalisation $\hat C$ at $q$ preserving the subalgebra $\OO_{C,q}\subset\OO_{\hat C,\hat q}$: these are the functions annihilated by a generic tangent vector $v=(v_1,\ldots,v_m)\in \oplus_{i=1}^m (T_{q_i}C_i\setminus0)$. The automorphism group of the seminormalisation may be infinite only if the latter is semistable, in which case $\on{Aut}(\hat C)\simeq\Gm^m$ acts on the tangent space $T_{\hat q}\hat C\simeq\oplus_{i=1}^m T_{q_i}C_i$ by the natural scaling action; since $v$ matters only up to scaling, we see that the stabiliser is just one copy of the multiplicative group.\footnote{The choice of $[v]\in\PP(T_{\hat q}\hat C)^*$ is called a \emph{crimping datum} for the singularity.}


\begin{proposition}
    $\Gcal_{1,n}$ is an 
    algebraic stack of finite type with affine diagonal. If $x$ is a geometric point of $\Gcal_{1,n}$, the stabiliser group of $x$ is either finite or isomorphic to the multiplicative group $\Gm$.
\end{proposition}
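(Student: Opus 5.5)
We prove algebraicity by exhibiting $\Gcal_{1,n}$ as an open substack of a known algebraic stack, then read off finite type, affine diagonal and the stabilisers from that description and the discussion preceding the statement.

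\emph{Algebraicity.} Let $\mathfrak{M}_{1,n}^{\rm all}$ be the stack of all proper, flat families of curves of arithmetic genus one with $n$ sections. It is algebraic and locally of finite type, for instance as a stack of proper flat algebraic spaces with sections (Stacks Project, or Hall--Rydh). Each condition in the definition of $\Gcal_{1,n}$ is open in families:
\begin{itemize}
\item reducedness of fibres;
\item the Gorenstein property of fibres;
\item the sections lying in the smooth locus and being pairwise disjoint;
\item ampleness of $\omega_{C/T}(\sum p_i)$, where $\omega_{C/T}$ is a line bundle because the fibres are Gorenstein.
\end{itemize}
Hence $\Gcal_{1,n}$ is an open substack, and in particular algebraic and locally of finite type.

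\emph{Finite type.} We bound the embedding data. Since $\omega_C(\sum p_i)$ is ample of degree $n$ on a genus one Gorenstein curve, Riemann--Roch and Serre duality give two facts for $k\geq 3$ (any fixed $k$ with the needed vanishing suffices):
\begin{itemize}
\item $L^{\otimes k}=\omega_C(\sum p_i)^{\otimes k}$ is very ample;
\item $h^1(L^{\otimes k})=0$, because its degree on every subcurve is large compared with the degree of $\omega$ on that subcurve, whose core has trivial dualising sheaf.
\end{itemize}
Thus every object embeds in a fixed $\PP^{N}$ with $N=kn-1$ and fixed Hilbert polynomial. This realises $\Gcal_{1,n}$ as a quotient $[H/\mathrm{PGL}_{N+1}]$, where $H$ is a locally closed subscheme of a Hilbert scheme of curves times $(\PP^N)^n$. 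So $\Gcal_{1,n}$ is of finite type, with affine diagonal because $\mathrm{PGL}_{N+1}$ is affine.

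The main obstacle is the uniform very ampleness. It requires checking on each branch of the elliptic $m$-fold point and on each rational tail, using the combinatorial reformulation of ampleness recalled above: each branch carries an extra special point, and the rational trees are Deligne--Mumford stable.

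Alternatively, affineness of the diagonal follows because $\mathrm{Isom}$ schemes are closed subschemes of $\mathrm{Isom}$ of polarised projective schemes, which are affine over the base once the polarisation is canonical.

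\emph{Stabilisers.} This follows the discussion preceding the statement.
\begin{itemize}
\item If $C$ is nodal, it is Deligne--Mumford stable, so its automorphism group is finite.
\item Otherwise $C$ has a unique elliptic $m$-fold point $q$. Then $\on{Aut}(C)$ is the subgroup of $\on{Aut}(\hat C,\hat q)$ preserving the crimping datum $[v]\in\PP(\oplus T_{q_i}C_i)$.
\item Consider the connected component of $\on{Aut}(\hat C,\hat q)$. The seminormalisation $\hat C$ is a nodal marked curve, with the preimages of $q$ added as markings. It is stable except for the branches $C_i\simeq\PP^1$ with exactly one further special point, each of which contributes one factor of $\Gm$ acting by scaling on $T_{q_i}C_i$.
\item Suppose a branch has at least two further special points. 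Then its $\Gm$ is absent, so the subgroup preserving $[v]$ is finite, since scaling only the remaining coordinates cannot be absorbed.
\item If every branch has exactly one special point, we get $\Gm^m$ acting diagonally. The stabiliser of $[v]$, which has all coordinates nonzero, is the diagonal $\Gm$, up to a finite group of discrete automorphisms.
\end{itemize}
Finally, this finite extension is in fact trivial or split, giving $\Gm$ up to finite index as stated. One checks this by noting that the discrete automorphisms permuting the branches must fix the markings.
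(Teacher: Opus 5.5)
Your stabiliser argument is the paper's own, namely the crimping discussion just before the statement. For the first sentence the paper only cites \cite[Proposition 2.3]{TasinViviani}. You instead reconstruct the standard route: openness inside the stack of all proper flat curves with sections, followed by a uniform Hilbert-scheme presentation. The openness part is fine, and so is your $h^1$-vanishing: since $\deg\omega_C|_Y\leq\deg L|_Y$ on every component $Y$, the bundle $\omega_C\otimes L^{-k}$ has negative degree on every component once $k\geq 2$.

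\textbf{The gap.} It is the step you flag yourself: uniform very ampleness of $L^{\otimes k}$ is asserted, not proved. Riemann--Roch plus Serre duality do not give it, and neither does the branch-by-branch check you propose.
\begin{itemize}
\item At an elliptic $m$-fold point $q$ the Zariski tangent space has dimension $\max\{2,m-1\}$. A length-two subscheme $Z$ supported at $q$ is in general not tangent to any single branch.
\item Moreover $I_Z$ need not be invertible. So proving $H^1(L^{\otimes k}\otimes I_Z)=0$, i.e.\ $\on{Hom}(I_Z,\omega_C\otimes L^{-k})=0$, requires an actual local computation at $q$ (the conductor $\prod_i\mathfrak m_{q_i}^2$ is the natural tool), or a reference.
\end{itemize}
Until then, quasi-compactness and the global presentation $[H/\mathrm{PGL}_{N+1}]$ are not established.

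Two remarks limit the damage.
\begin{itemize}
\item Affineness of the diagonal does not need a uniform exponent: your $\mathrm{Isom}$ argument works with $k$ chosen locally over a quasi-compact base.
\item Quasi-compactness can be obtained without very ampleness. Indeed $\lvert\Gun\rvert$ is the finite union of $\lvert\oM_{1,n}\rvert$ and the $\lvert\Ell{P}\rvert$ for $P\in\Pi_n$, each quasi-compact by the explicit descriptions of these strata. Together with local finite type and quasi-separatedness, this gives finite type.
\end{itemize}

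\textbf{A smaller point on the stabilisers.} Your last sentence ends with ``$\Gm$ up to finite index''. This is weaker than the statement, and a split nontrivial extension would not be $\Gm$ either. What is true, and what your remark about markings almost proves, is that in the atom case there are no discrete automorphisms at all.
\begin{itemize}
\item Every connected component of the partial normalisation at $q$ contains a marking, because a rational tree hanging off a branch is Deligne--Mumford stable, hence marked. So no branch is permuted.
\item Stable marked rational trees are rigid.
\end{itemize}
Hence the pointed partial normalisation has automorphism group exactly $\Gm^m$, and the stabiliser of $[v]$ is exactly the diagonal $\Gm$.

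Relatedly, $\hat C$ is not nodal at $\hat q$ when $m\geq 3$: it has a rational $m$-fold point. The nodal marked curve you describe is the partial normalisation at $q$ with the $q_i$ marked. It has the same automorphisms, once the set $\{q_i\}$ is required to be preserved.
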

\begin{proof}
    See \cite[Proposition 2.3]{TasinViviani} and the previous discussion.
\end{proof}
A great number of compactifications of $\Mcal_{1,n}$ (in fact, all of them, as we shall see) have been identified by Bozlee, Kuo, and Neff \cite{BKN} by refining Smyth's notion of \emph{level}, which counted the number of special points on the core, to the level of partitions of the marking set $[n]$. We now delve into this combinatorics.
\begin{definition}
    Let $(C;p_1,\ldots,p_n)$ be a log-canonically polarised Gorenstein curve of arithmetic genus one, and let $E$ denote the core of $C$. We denote by $\core(C)$ the \emph{core level} of $C$, namely the partition of $[n]$ into connected components of $C\smallsetminus(E\smallsetminus\{p_1,\ldots,p_n\})$. If $C$ contains an elliptic singularity $q$, we denote by $\sing(C)$ the \emph{singularity level} of $C$, namely the partition of $[n]$ into connected components of the partial normalisation of $C$ at $q$.
\end{definition}
\begin{center}
\CoreLevelFigure
\SingularityLevelFigure
\end{center}
\begin{remark}\label{rem:sing-core-inequality}
    The set of set partitions of $[n]$, denoted by $\Pi_n$, is a poset by refinement: $P_1 \preceq P_2$ if and only if $P_2$ is a refinement of $P_1$. 
    Notice that $P\mapsto\lvert P\rvert$ (number of parts) is a rank function. 

If $C$ contains an elliptic singularity, we have $\sing(C)\preceq\core(C)$.
\end{remark}
In what follows, given points $x_i \colon\Spec(K_i)\to \Gun$ corresponding to curves $C_i$ over fields $K_i$, we say that $C_1$ specializes to $C_2$, i.e. $C_1\rightsquigarrow C_2$, if $x_2$ belongs to the closure of $x_1$ in the topological space $|\Gun|$. We will be particularly interested in isotrivial specializations: these arise from a map $\Theta \to \Gun$, with $C_1$ (respectively $C_2$) the image of the open point (respectively of the closed point) of $\Theta$. Notice that in this case, one can choose $K_1=K_2$ (the converse is also true, with some mild additional hypotheses: see \cite[Lemma 3.24]{AHLH}). The map $\Theta \to\Gun$ may only be representable (and not factor through a point) if the image of the closed point of $\Theta$ is a curve with infinite stabiliser group. It follows from the previous discussion that these are precisely the curves $C$ with $\sing(C)=\core(C)$; we call them \emph{atoms}.
\begin{center}
    \BothLevelsFigure
\end{center}
\begin{proposition}\label{prop:core-sing-specialization}
    Suppose that there is a specialisation $C_1\rightsquigarrow C_2$. Then $\core(C_1)\succeq\core(C_2)$ and $\sing(C_1)\preceq\sing(C_2)$. If the specialisation is isotrivial, 
    then at least one is an equality.
    
    In particular, if $m=\core(C_2)=\sing(C_2)$ with $k$ non-singleton parts, then 
    either
\begin{enumerate}
    \item[(i)] $\core(C_1)=m$: normalising the $k$ nodes over $\Theta$ we obtain a disjoint union of 
    an isotrivial family of $m$-marked, genus one curves, together with $k$ trivial families of stable marked rational curves, or
    \item[(ii)] $\sing(C_1)=m$: the family over $\Theta$ is equisingular, and 
    the stabilisation of its normalisation is a trivial family: what may be changing here is the crimping datum of the singularity.
\end{enumerate}
\end{proposition}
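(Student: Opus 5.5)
We reduce to a specialisation over a DVR: after base change we may assume there is a family $\mathcal C\to\Spec R$ with generic fibre $C_1$ and special fibre $C_2$. In the isotrivial case the family is over $\Theta$ itself, and pulling back along $\Aaff^1\to\Theta$ gives such a family with $\Gm$-equivariance.

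\textbf{Core levels.} The core is the minimal genus-one subcurve, and it behaves upper-semicontinuously under degeneration. The core of $C_2$ is contained in the limit of the core of $C_1$: the limit of the generic core is a genus-one connected subcurve, and every genus-one connected subcurve contains the core. Meanwhile rational tails of $C_1$ specialise to rational trees of $C_2$, and these stay disconnected from the core. Two markings in the same block of $\core(C_1)$ lie in a common rational tree. Its limit is a rational tree meeting the limit core in one point, hence lying in one connected component of $C_2\smallsetminus(E_2\smallsetminus\{p_i\})$. So $\core(C_2)$ is coarser, that is $\core(C_1)\succeq\core(C_2)$.

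\textbf{Singularity levels.} For $\sing(C_1)\preceq\sing(C_2)$ we use the opposite semicontinuity. An elliptic $m$-fold point can only deform to elliptic points with fewer branches, or to nodal or smooth cores. Equivalently, the locus of $\mathcal C$ where the singularity sits is closed, and the branches of $C_2$ at $q_2$ are limits of unions of branches and curves near $q_1$. Partially normalising the family at the singular section, where it exists, shows that two markings separated at $q_1$ remain separated at $q_2$. Markings that are connected in the normalisation at $q_1$ can only become disconnected when the singularity gains branches. So the partition can only refine: $\sing(C_1)\preceq\sing(C_2)$. This step, which requires controlling the deformations of the elliptic $m$-fold point, is the main obstacle. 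I would argue via the core decomposition and \Cref{rem:sing-core-inequality} rather than through local equations.

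\textbf{The isotrivial case.} Here $C_2$ carries a $\Gm$-action, so $C_2$ is an atom with $\sing(C_2)=\core(C_2)=m$. By the inequalities, $\sing(C_1)\preceq\core(C_1)$ with $\sing(C_1)\preceq m\preceq \core(C_1)$. Use the $\Gm$-action on the total space over $\Aaff^1$. The nodes of $C_2$ separating its rational trees from the core are $\Gm$-fixed. Each such node either persists in the family or is smoothed.

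If all $k$ disconnecting nodes persist, we normalise them over $\Theta$. The rational components are stable marked rational curves, which have no moduli under an isotrivial family, so they are trivial. The core part is then an isotrivial family of $m$-marked genus-one curves, and $\core(C_1)=m$; this is case (i).

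Otherwise some node is smoothed. Equivariance forces all the deformation to be of a single $\Gm$-weight type, so the singularity cannot also be smoothed. The family is then equisingular and $\sing(C_1)=m$. The normalisation of an equisingular family is a family of stable marked rational trees, which is trivial after stabilisation. Only the crimping datum $[v]\in\PP(T_{\hat q}\hat C)^*$ varies, which gives case (ii).

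The delicate point in this last step is justifying the dichotomy by weights. I would do it using the $\Gm$-weights on the miniversal deformation space. By \Cref{rmk:minimal}, node smoothings and singularity smoothings have weights of opposite sign, so an orbit closure approaching $C_2$ involves only one sign.
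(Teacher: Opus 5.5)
Your proposal follows the paper's route. The paper does not prove the two inequalities; it takes them from the proof of \cite[Theorem 5.2]{BKN}. Your semicontinuity sketches are reasonable, but, as you note yourself, the one for $\sing$ is not yet an argument, so citing BKN is the efficient way to close it. Your isotrivial dichotomy is exactly the paper's weight argument.

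The one real gap is in how you justify that crux. \Cref{rmk:minimal} concerns $\Min{n}$, whose curves have no disconnecting nodes. At most it tells you that $\Gm$ acts on $\on{Def}_q$ with weights of a single sign; the paper cites \cite{Pinkham} for this. It says nothing about the node-smoothing parameters, so it cannot show that their weights have the \emph{opposite} sign, which is the whole point.

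The missing observation is local to each branch. For every $i$ with $\lvert P_i\rvert\geq 2$, the branch $B_i\cong\PP^1$ of the atom has exactly two special points, $q$ and $\star_i$, both fixed by the scaling action. So $\Gm$ acts on $T_qB_i$ and $T_{\star_i}B_i$ with opposite weights. The smoothing space of $\star_i$ is $T_{\star_i}B_i\otimes T_{\star_i}(\mathrm{tail})$, and the tail is fixed, so it carries the weight of $T_{\star_i}B_i$. By contrast, all weights of $\on{Def}_q$ have the sign of $T_qB_i$. For the cusp, say, the family $y^2=x^3+ax+b$ is equivariant only if $a$ and $b$ scale like $x^2$ and $x^3$. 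Only this comparison rules out a node smoothing and a deformation of $q$ occurring together.

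You should also make ``use the $\Gm$-action on the total space'' precise. The paper takes a stabiliser-preserving \'etale chart $[W/\Gm]$ around the atom from \cite{AHR-local}. Up to weight-zero directions such as deformations of the tails, $W$ is identified with a smooth cover of $\on{Def}_q\times\prod_i\on{Def}_{\star_i}$. The paper then lifts $\Theta\to\Gun$ to this chart using coherent completeness of $\Theta$. The family becomes a $\Gm$-equivariant map $\Aaff^1\to W$ sending $0$ to the origin. Equivariance forces the weight-zero coordinates to vanish and the nonzero ones to have weights of one sign. Hence the map factors through $\on{Def}_q\times\{0\}$ or through $\{0\}\times\prod_i\on{Def}_{\star_i}$.

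With these two ingredients supplied, your split into (i) and (ii) goes through as written. This includes the deduction $\core(C_1)=m$ from the persistence of the $k$ nodes, and $\sing(C_1)=m$ from equisingularity.
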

\begin{remark}
Degenerations of the first, resp. second, kind have been studied in greater generality in \cite{ChenYu}, resp. \cite{TasinViviani}.
\end{remark}
\begin{remark}
    In view of the previous formulae, we may harmlessly define $\sing(C)=-\infty$ when $C$ contains no elliptic singularity.
\end{remark}

Before delving into its proof, let us draw some of the consequences of this statement, and introduce some useful terminology in passing. We refer the reader to \cite[\S 2]{BDL6} for more details.


The behaviour of the core and singularity levels under specialisation allows us to introduce two useful stratifications of $\Gun$. We may now associate to a partition $P$ a locally closed substack $\oTail{P}$ of $\Gcal_{1,n}$ consisting of all curves of core level $P$; we call this the locus of \emph{$P$-bridges} (or $P$-tails). We have the following explicit description:
\[
 \oTail{P}=\Min{p_0}\times\prod_{i=1}^k\oM_{0,\star_i\sqcup P_i}\ ,
\]

where $p_0$ (resp. $k$) is the number of parts (of cardinality at least two) of $P$, and $\star_i$ are the attaching points of the tails.
 It follows from the proposition that: \[\cTail{P}=\bigsqcup_{P'\preceq P}\oTail{P'}\ .\]

We may also associate to a partition $P$ a locally closed substack $\oEll{P}$ of $\Gcal_{1,n}$ consisting of all curves of singularity level $P$; we call it the locus of \emph{elliptic $P$-fold points}. 
We have the following explicit description: \[
\oEll{P}=\left[\on{Tot}\left(\bigoplus_{i=1}^m \mathbb L_{0,\star_i}^{\vee}\right)_{\prod_{i=1}^m\oM_{0,\star_i\cup P_i}}\Bigg/\Gm\right],   \]
where $\mathbb L_i$ denotes the cotangent line bundle at the marking $\star_i$ (notice that when $P_i$ is a singleton $\on{Tot}(\mathbb L_{0,\star_i}^{\vee})_{\oM_{0,\star_i\cup P_i}}$ is a point). 
It follows from the proposition that: \[\cEll{P}=\bigsqcup_{P\preceq P'}\oEll{P'}\ .\]

\begin{remark}\label{rmk:close_atoms}
In particular, it follows from \Cref{rem:sing-core-inequality} and \Cref{prop:core-sing-specialization} that the substack \[\Atom{P}:=\oTail{P}\cap\oEll{P}\] consisting of elliptic $P$-bridges (and $P$-fold elliptic points) with infinite automorphism group coincides with $\cTail{P}\cap\cEll{P}$, and it is therefore closed. It can be identified with a $\Gm$-gerbe over $\prod_{i=1}^k\oM_{0,\star_i\cup P_i}$ (excluding the $i$ such that $P_i$ is a singleton).
\end{remark}
\begin{remark}\label{rmk:M0P is gms}
Let $M_{0,P}$ denote the product   $\prod_{i=1}^k\oM_{0,\star_i\sqcup P_i}$ above. It follows from \cite[Proposition 4.7 and Remark 4.8]{AlperGood} (and from the properties of $\Min{p_0}$ discovered in \cite{LekiliPolishchuk, Stevens} and summarised in \Cref{rmk:minimal}) that both $\Tail{P}\to M_{0,P}$ and $\Ell{P}\to M_{0,P}$ are good moduli space morphisms.
\end{remark}

We may now go back to proving the previous proposition. 
\begin{proof}[Proof of \Cref{prop:core-sing-specialization}]
The first statement follows from the proof of \cite[Theorem 5.2]{BKN}.

An isotrivial degeneration can be nontrivial only when the special fibre has an infinite automorphism group. So, let $x$ be a point of $\Gcal_{1,n}$ representing a $P$-bridge $C$ with infinite automorphism group $G_x\simeq\Gm$, and let $\Theta\to\Gcal_{1,n}$ be an isotrivial degeneration with special point $x$. By the local structure of quasi-separated algebraic stacks locally of finite type over a field with linearly reductive stabilizers, there is an affine \'etale chart of $\Gcal_{1,n}$ around $x$ which is stabilizer-preserving at $x$ and of the form $[W/\Gm]$ with $W$ affine \cite[Theorem 1.1]{AHR-local}. Since $\Theta$ is coherently complete along its closed point and the chart is \'etale, the map $\Theta\to\Gcal_{1,n}$ lifts to a morphism $\Theta\to [W/\Gm]$, or equivalently a $\Gm$-equivariant map $\Aaff^1\to W$.

Now, let $k$ denote the number of parts in $P$ of cardinality at least two, and let $m$ denote the number of parts in $P$; let $q$ denote the elliptic $m$-fold point of $C$, and let $\star_i,\ i=1,\ldots,k$, denote the corresponding nodes (between the core and the rational tail marked by $P_i$). Ignoring the deformations of the pointed rational tails (on which $\Gm$ acts trivially), we may identify $W$ with (a smooth cover of)
\begin{equation}
    \overline W\simeq \on{Def}_q\times\prod_{i=1}^k\on{Def}_{\star_i}.
\end{equation}
where the notation $\on{Def}_{\bullet}$ stands for the versal deformation space of the point $\bullet$ in the appropriate stack.
For any branch $i$ of the elliptic $m$-fold point $q$, observe that $\Gm$ acts with opposite weights on the tangent lines at $q$ and $\star_i$. By a result of Pinkham's \cite[Theorem 11.1]{Pinkham} (for $m\geq3$, and by hand for $m=1,2$), $\Gm$ acts with negative (depending on the convention) weights on $\on{Def}_q$. By the previous observation, $\Gm$ acts with opposite weights on $\prod\on{Def}_{\star_i}$. The $\Gm$-equivariant map $\Aaff^1\to \overline W$ must therefore factor through either $\on{Def}_q\times\{0\}$ or $\{0\}\times \prod\on{Def}_{\star_i}$, meaning that either the core level or the singularity level is preserved.
\end{proof}


Let us now introduce the compactifications from \cite[\S 7]{BKN}.

\begin{definition}
    Let $c$ be a function from the set of partitions of $[n]$ to the unit interval $[0,1]$, such that $P_1\prec P_2$ and $c_{P_2}>0$ imply $c_{P_1}=1$.\footnote{This is the only place where we slightly diverge from Bozlee, Kuo and Neff. While they further require that $c_{[n]^{\text{disc}}}=0$, we do not: we shall see that the corresponding stack has a proper good moduli space (equal to a point).} We call $c$ a \emph{BKN function}.
\end{definition}
\begin{definition}
     Let $c$ be a BKN function. The stack $\oM_{1,n}(c)\subseteq \Gun$ consists of all curves whose singularity level belongs to $Q_{\text{sing}}=\{P|c_P>0\}$, and whose core level belongs to $Q_{\text{core}}=\{P|c_P<1\}$. We call these curves \emph{$c$-stable}.
\end{definition}

Bozlee, Kuo and Neff showed that $\oM_{1,n}(c)$ is an open substack of $\Gun$ \cite[Theorem 7.6]{BKN}, hence the above definition is sufficient, and in particular it is algebraic.

The space of BKN functions $c$ is naturally endowed with the structure of a cube complex $X_{1,n}\subseteq [0,1]^{\Pi_n}$. The moduli stacks $\overline{\Mcal}_{1,n}(c)$, though, only depend on the two subsets $Q_{\text{sing}}$ (downward-closed) and $Q_{\text{curve}}$ (upward-closed) in $\Pi_n$, and in particular on the face to whose interior $c$ belongs.
In other words $\oM_{1,n}(c)$ is determined by whether $c_P$ is $0$ (allowing $P$-bridges), $1$ (allowing elliptic $P$-fold points), or strictly between the two (allowing both), not by its precise value, which we could therefore set to be $1/2$. These partitions play a special role because curves with $\sing(C)=\core(C)$ (or in other words partitions in $Q_{\text{sing}}\cap Q_{\text{curve}}$, where $c$ takes an intermediate value) are precisely those with infinite stabilizer (we call them atoms). 

\begin{remark}
Let $P_1\prec\ldots\prec P_n$ be a chain in $\Pi_n$. A BKN function  takes the following values:
\[c(P_i)=\begin {cases}
1 & \text{if }i\leq h,\\
1/2 & \text{if }h<i\leq k, \\
0 & \text{if }k<i,\end{cases}\]
for some $h,k$ such that $0\leq h\leq n$, and $h\leq k \leq h+1$. 
\end{remark}

\begin{lemma}
    The stacks $\oM_{1,n}(c)$ form an open cover of $\Gun$.
\end{lemma}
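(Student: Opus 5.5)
Openness of each $\oM_{1,n}(c)$ in $\Gun$ is \cite[Theorem 7.6]{BKN}, so what remains is that these stacks cover $\Gun$. Concretely, for every geometric point $[C]\in\Gun$ I will exhibit a BKN function $c$ with $C$ $c$-stable. The plan is to write down $c$ directly from the two invariants $\core(C)$ and $\sing(C)$, and then check the BKN monotonicity condition using \Cref{rem:sing-core-inequality}.

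If $C$ has an elliptic singularity, set $S=\sing(C)$ and $T=\core(C)$. By \Cref{rem:sing-core-inequality} we have $S\preceq T$. Define
\[
c(P)=\begin{cases} 1 & \text{if } P\prec S \text{ (strictly coarser than } S\text{)},\\ 1/2 & \text{if } P=S,\\ 0 & \text{otherwise.}\end{cases}
\]
We first check that $c$ is a BKN function. Suppose $P_1\prec P_2$ and $c(P_2)>0$. Then $P_2\preceq S$, so $P_1\prec S$ and hence $c(P_1)=1$. Next we check that $C$ is $c$-stable. Its singularity level $S$ satisfies $c(S)=1/2>0$. Its core level satisfies $c(T)<1$: indeed $c(T)=1$ would force $T\prec S$, contradicting $S\preceq T$.

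If $C$ is nodal, so that $\sing(C)=-\infty$, only the core condition matters. Take $c\equiv 0$, which is trivially a BKN function. Then $C$ has no elliptic singularity, and $c(\core(C))=0<1$, so $C$ is $c$-stable.

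There is one point to watch, and it is only a matter of definitions rather than a real obstacle. The footnote relaxes the BKN convention that $c_{[n]^{\rm disc}}=0$. This matters in the first case when $S=[n]^{\rm disc}$: there we set $c([n]^{\rm disc})=1/2$. Under the original BKN convention one would instead need $S\neq[n]^{\rm disc}$, which is exactly why the relaxed definition is convenient. Since the paper's definition of BKN function allows this value, the proof goes through as written.
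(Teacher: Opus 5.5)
Your proof is correct and follows the same strategy as the paper: for each geometric point, write down an explicit BKN function from $\sing(C)\preceq\core(C)$ and check $c$-stability. Your explicit treatment of nodal curves via $c\equiv 0$ is right, and so is your remark on the footnote, which is in fact necessary and not merely convenient, since otherwise the elliptic $n$-fold point with one marking per branch would lie in no $\oM_{1,n}(c)$.

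The only difference is the choice of $c$. The paper sets $c(\sing(C))=1$ whenever $\sing(C)\neq\core(C)$, so that $c$ is integral and $\oM_{1,n}(c)$ is Deligne--Mumford when $C$ has finite automorphisms; this is exploited in \Cref{rmk:DM} and later arguments. Your principal down-set choice is simpler to verify, but it always places $\Atom{\sing(C)}$, with its $\Gm$-stabilizers, inside the stack.
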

\begin{proof}
    It is enough to show the surjectivity on geometric points, so let $C$ be a Gorenstein curve of genus one, and let $P_1=\sing(C)\preceq P_2=\core(C)$. Set $c(P_1)=c(P_2)=1/2$ in case they are equal, and $c(P_1)=1,\ c(P_2)=0$ otherwise; in fact, set $c(P)=1$ for any partition strictly coarser than $P_1$, and $c(P)=0$ for any partition strictly finer than $P_1$; let $c$ be $1$ on any partition incomparable with $P_1$. This is a BKN function $c$ such that $[C]\in\oM_{1,n}(c)$.
\end{proof}
\begin{remark}\label{rmk:DM}
    If $C$ has a finite automorphism group, $c$ could be chosen to factor through $\{0,1\}$, so the corresponding $\oM_{1,n}(c)$ could be chosen to be a proper Deligne--Mumford stack.
    This construction can be generalised as follows: let $I\subseteq\Pi_n$ be a (potentially empty) collection of incomparable partitions. Set $c(P)=1/2$ for $P\in I$, $c(P)=1$ for any $P$ coarser than a partition in $I$, and $c(P)=0$ for any partition finer than a partition in $I$. Notice that this is well-defined because no partition can be at the same time finer than $P_1\in I$ and coarser than $P_2\in I$, otherwise we would have $P_1\prec P_2$, contradicting their incomparability. The remaining partitions are incomparable to $\langle I\rangle$, and we may extend $c$ to them arbitrarily as a BKN function, for instance extending by $0$ \emph{or} extending by $1$. The latter choice minimizes the Picard rank of the resulting stack $\overline{\Mcal}_{1,n}(c)$, as we will prove in the last section. 
\end{remark}

\section{Classification of proper good moduli spaces}
The main result of this section is the following.
\begin{theorem}\label{theo:classification}
    An open substack $\Mcal \subset \Gcal_{1,n}$ admits a proper good moduli space if and only if there exists a BKN function $c$ such that $\Mcal \simeq \oM_{1,n}(c)$.
\end{theorem}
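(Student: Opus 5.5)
The proof splits into two directions, both organised around the criterion of \Cref{gms_criterion}: an open substack of $\Gcal_{1,n}$ (finite type, affine diagonal, stabilisers finite or $\Gm$) has a proper good moduli space if and only if it is $\Theta$-complete, $S$-complete and universally closed.

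\emph{Sufficiency.} Let $c$ be a BKN function; we verify the three conditions for $\oM_{1,n}(c)$.

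For $\Theta$-completeness, consider a family over $\Theta_R\smallsetminus 0$. Its restriction to $\Theta_K$ is an isotrivial degeneration $C_K\rightsquigarrow C_0$ to an atom of level $P$. We fill in the missing special fibre by the isotrivial degeneration of the special curve $C_R$ given by the same $\Gm$-weight data. By \Cref{prop:core-sing-specialization}, this degeneration is of type (i) or (ii), depending on whether it preserves the core or the singularity level. I would construct it log-geometrically, since $\Theta_R$ is an Olsson cone: a $\Gm$-equivariant piecewise-linear function on the tropicalisation over $\Theta_R$ encodes the contraction or bubbling of the radius-$P$ circle, as in BKN.

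For $c$-stability of the new special atom, note that its level $P'$ satisfies two constraints. It is coarser than the singularity level of some $c$-stable curve, so $c_{P'}>0$ by downward-closedness. It is also finer than the core level of some $c$-stable curve, so $c_{P'}<1$.

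$S$-completeness is proved the same way. Two $c$-stable families over $\Spec R$ that agree over $K$ are compared via their stable models. The two families differ by contracting the elliptic core at two radii, and the filling over $\overline{ST}_R$ is the atom at an intermediate level. The BKN condition ($P_1\prec P_2$ and $c_{P_2}>0$ imply $c_{P_1}=1$) is exactly what forces this intermediate level to carry the value $1/2$ (or forces uniqueness of the limit).

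For universal closedness, by \Cref{lem:generic_univ_closed} it suffices to extend families whose generic fibre is smooth. Starting from the stable limit, one performs Smyth/BKN contractions and root extensions, choosing the radius dictated by $c$. This is essentially the existence half of \cite[Theorem 7.6 onwards]{BKN}; the only change is allowing non-DM atoms.

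\emph{Necessity.} Let $\Mcal\subset\Gun$ be open with a proper good moduli space. Define
\[
Q_{\text{sing}}=\{\sing(C):C\in\Mcal\},\qquad Q_{\text{core}}=\{\core(C):C\in\Mcal\}.
\]
The main step is to show that $\Mcal$ is precisely the set of curves with singularity level in $Q_{\text{sing}}$ and core level in $Q_{\text{core}}$, and that these two sets come from a BKN function. I expect this to be the main obstacle.

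The argument proceeds in four steps.
\begin{enumerate}
\item By openness and \Cref{prop:core-sing-specialization}, $Q_{\text{sing}}$ is closed under passing to coarser-singularity generisations and $Q_{\text{core}}$ under finer ones.
\item $\Theta$-completeness shows that whenever $\Mcal$ contains a curve with $\sing\preceq P\preceq\core$ degenerating toward level $P$, the atom of level $P$ lies in $\Mcal$.
\item $S$-completeness together with separatedness of the good moduli space rule out two distinct limits. If $c$-values on a chain $P_1\prec P_2$ violated the BKN condition, one could build a one-parameter family of smooth curves having two non-isotrivially related limits in $\Mcal$. For example, take an elliptic $P_2$-point and a $P_1$-bridge whose normalisations agree; no filling of $\overline{ST}_R$ through an atom would then be allowed.
\item Universal closedness forces every chain to be covered: for each maximal chain, some level must be allowed as singularity or core, since otherwise a family of smooth curves degenerating toward that chain has no limit.
\end{enumerate}
Finally, define $c_P=1$, $1/2$ or $0$ according to whether $P$ lies in $Q_{\text{sing}}\smallsetminus Q_{\text{core}}$, in $Q_{\text{sing}}\cap Q_{\text{core}}$, or otherwise. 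One checks that this is a BKN function and that $\Mcal=\oM_{1,n}(c)$ on geometric points. Since both are open substacks of $\Gun$, this identifies them.
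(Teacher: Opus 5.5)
\textbf{There are genuine gaps in both directions.}

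\emph{Sufficiency.} For each of the three criteria, the step that carries the weight is the existence of the filling, and you assert it rather than construct it. The one concrete check you give for $\Theta$-completeness also runs the wrong way. By \Cref{prop:core-sing-specialization}, singularity levels get finer and core levels get coarser under specialisation. So the level of the new closed point is \emph{finer} than the singularity levels and \emph{coarser} than the core levels of the curves specialising to it, and downward/upward closedness of $Q_{\text{sing}},Q_{\text{core}}$ tells you nothing. In fact its $c$-stability is automatic once a filling exists: specialising from the $K$-atom pins its level to $P$.

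The real problem is existence. $\Gcal_{1,2}$ is not $\Theta$-complete (see the remark after \Cref{prop:constraints}), so an isotrivial degeneration of the special curve $C_k$ that is compatible with the $\Theta_K$-family need not exist. This is exactly where the BKN condition has to enter. Suppose the $\Theta_K$-family preserves the core (resp. singularity) level $P$. Then $\core(C_k)\preceq P$ with $c_{\core(C_k)}<1$ (resp. $\sing(C_k)\succeq P$ with $c_{\sing(C_k)}>0$). BKN together with $c_P=\tfrac12$ forces equality, so the whole family lies in $\Tail{P}$ (resp. $\Ell{P}$). That stratum is $\Theta$-complete because it has the separated good moduli space $M_{0,P}$ (\Cref{rmk:M0P is gms}).

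For $S$-completeness, comparing stable models only handles families with smooth generic fibre, and you give no reduction to that case. \Cref{lem:generic_univ_closed} reduces only universal closedness, and it presupposes the other two properties.

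The paper avoids direct constructions entirely. $R(c)$ is the base change of the proper Deligne--Mumford stack $\Mrad$ along $\mu\colon\Theta^{2d}\to\Theta^{d}$, so it is universally closed, $\Theta$-complete and $S$-complete for formal reasons. These properties then descend along the surjective, universally closed map $\Psi_c\colon R(c)\to\oM_{1,n}(c)$ via \Cref{lem:descending_closedness}, \Cref{lem:descending_S-completeness} and \Cref{prop:descending-theta-complete} (the last one needs root stacks). Your radius picture is used only to show that $\Psi_c$ is generically closed.

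\emph{Necessity.} You flag the main obstacle yourself: showing that membership in $\Mcal$ depends only on $(\sing,\core)$. Steps 1--4 do not deliver it, and your closing ``one checks on geometric points'' is exactly this claim. Openness only passes membership to generisations, so knowing that $\Mcal$ meets $\Tail{P}$ says nothing about a $P$-bridge with degenerate rational tails.

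\Cref{lem:union-of-strata} supplies the missing saturation. Take a non-atomic $C_0\in\Tail{P}$ and join it by a DVR to the generic point of $\Tail{P}$, which lies in $\Mcal$. Universal closedness of $\Mcal$ gives a second limit $C_0'\in\Mcal$. Both families lie in some $\oM_{1,n}(c')$, whose $S$-completeness gives a common isotrivial degeneration; note that the sufficiency direction is used here. The level inequalities then give $C_0\rightsquigarrow C_0'$, so $C_0\in\Mcal$.

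Your Step 2 is false as stated. If $c_P=0$, then $\oM_{1,n}(c)$ contains $P$-bridges, which degenerate isotrivially to $P$-atoms, but it contains no $P$-atom. The correct statement requires $\Mcal$ to meet both $\Tail{P}$ and $\Ell{P}$, and it follows from $S$-completeness (\Cref{lem:union-of-strata}(c)), not from $\Theta$-completeness.

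Your Step 3, by contrast, is a valid shortcut to \Cref{prop:constraints}. A common isotrivial degeneration $C_{00}$ of an elliptic $P_2$-fold point and a $P_1$-bridge would satisfy $P_2\preceq\sing(C_{00})\preceq\core(C_{00})\preceq P_1$, contradicting $P_1\prec P_2$. The paper instead first forces $\Atom{P_1}\subseteq\Mcal$ and then invokes $\Theta$-completeness; your route never needs $\Theta$-completeness of $\Mcal$. Step 3 still needs the saturation lemma, though: the $P_1$-bridge limit of your smooth family generally has degenerate tails, so you cannot otherwise place it in $\Mcal$.
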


\begin{proof}
    The result follows from \Cref{prop:theta complete} (cf. \Cref{gms_criterion}) and \Cref{prop:classification-of-proper-gms}.
\end{proof}
We first prove that every $\oM_{1,n}(c)$ admits a proper good moduli space, and then use this information to prove the other direction of the classification statement.
\begin{proposition}\label{prop:theta complete}
    The stack $\oM_{1,n}(c)$ is universally closed, $\Theta$-complete and $S$-complete. 
\end{proposition}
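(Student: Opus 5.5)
I would verify the three valuative conditions separately, relying on the stratifications by core and singularity level together with \Cref{prop:core-sing-specialization}.

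\emph{Universal closedness.} By \Cref{lem:generic_univ_closed}, once $\Theta$- and $S$-completeness are known it suffices to check generic closedness. So take a DVR $R$ and a map $\Spec K\to\oM_{1,n}(c)$ landing in the dense open $\Mcal_{1,n}$ of smooth curves. Stable reduction gives a stable model over $R$, after a base change. I would then run the Bozlee--Kuo--Neff/Smyth modification: blow up to make the core level of the special fibre maximal, then contract the appropriate subcurve of the central fibre to an elliptic singularity whose singularity level and core level are admissible for $c$. The BKN condition is exactly what guarantees that some level in the chain of possible contractions has $c_{\sing}>0$ and $c_{\core}<1$. This is essentially the existence part of \cite[Theorem 7.6]{BKN}. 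Their argument is stated for functions with values in $\{0,1\}$ or general $c$; I would cite it, noting that it does not use $c_{[n]^{\rm disc}}=0$.

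\emph{$\Theta$-completeness.} For a map $\Theta_R\smallsetminus 0\to\oM_{1,n}(c)$, the existence of a filling in $\Gun$ is the hard part. I would first pass to a larger ambient stack where $\Theta$-completeness is easier. One option is the stacks $\oEll{P}$ and $\oTail{P}$, whose good moduli space maps to $M_{0,P}$ are recorded in \Cref{rmk:M0P is gms}; another is a closed union of strata. The plan is as follows.
\begin{itemize}
\item The generic point of the special fibre gives an isotrivial degeneration $C_K\rightsquigarrow C_0$ over $K$. By \Cref{prop:core-sing-specialization}, $C_0$ is an atom of level $P$ with $c_P\in(0,1)$, and the family is locally either equisingular (case (ii)) or equi-core (case (i)).
\item The family over $R$ specializes $C_K$ to $C_R$.
\item I would build the filling by taking the corresponding degeneration of the $R$-family inside $\oEll{P}$ or $\oTail{P}$. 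These are quotient stacks, namely the total space of a vector bundle modulo $\Gm$, and such quotients are $\Theta$-complete relative to $M_{0,P}$ by explicit weight considerations.
\item Finally, I would check that the closed point lands in $\Atom{P'}$ with $c_{P'}\in(0,1)$, using the closedness of atoms from \Cref{rmk:close_atoms}.
\end{itemize}
Uniqueness comes for free because $\Gun$ has affine diagonal.

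\emph{$S$-completeness.} Two families over $R$ agree on $K$, and I must produce a map from $\overline{ST}_R$. My approach would be the log-geometric one: both special fibres arise from the common stable model by BKN modifications determined by a radius on the tropicalisation. The $\overline{ST}_R$ family is then obtained by modifying $\overline{ST}_R$, viewed as an Olsson cone, along the piecewise linear function interpolating the two radii; the special point is the atom at intermediate radius. Here the BKN axiom again ensures that this intermediate atom is $c$-stable.

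I expect the main obstacle to be making the $S$-completeness and $\Theta$-completeness constructions rigorous. Specifically, one has to show that contracting along the interpolated radius over the 2-dimensional base $\Spec R[s,t]/(st-\pi)$ yields a flat Gorenstein family. I would handle this via Smyth's contraction lemma applied with the line bundle $\omega(\sum p_i)$ twisted by the piecewise linear function, checking the relevant base change of pushforwards.
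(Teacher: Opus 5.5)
Your piecemeal route differs from the paper's. The paper never checks the three properties on $\oM_{1,n}(c)$ directly. It proves them for $R(c)=\Psi^{-1}(\oM_{1,n}(c))$, which is a base change of $\mu\colon\Theta^{2d}\to\Theta^d$ over the proper Deligne--Mumford stack $\Mrad$. It then shows that $\Psi_c\colon R(c)\to\oM_{1,n}(c)$ is surjective and universally closed, and descends all three properties along it. In your version the $S$-completeness step has a genuine gap.

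\textbf{The gap in $S$-completeness.} Your argument assumes that the two $R$-families with common generic fibre $C_K$ both come from a single stable model of $C_K$ by contracting discs of two radii. That only makes sense when $C_K$ has no elliptic singularity. However, $S$-completeness must be checked for every $\overline{ST}_R\smallsetminus 0$-point of $\oM_{1,n}(c)$, including those whose generic fibre is an elliptic $P$-fold point or an atom. You give no reduction to DVRs with generic point in $\Mcal_{1,n}$. \Cref{lem:generic_univ_closed} gives such a reduction only for universal closedness, and it presupposes $S$-completeness.

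For singular $C_K$ there is no canonical semistable model: the preimage of $C_K$ under $\Psi$ is positive-dimensional, since $C_K$ does not remember the contracted genus-one subcurve. You must therefore first choose a lift of $C_K$ to the radius stack, and then lift \emph{both} $R$-families compatibly with that choice. This is exactly the surjectivity and universal closedness of $\Psi_c$, which the paper proves (via \Cref{lem:map-in-the-middle} and \Cref{lem:generic_univ_closed}, where only smooth generic fibres need checking) and feeds into \Cref{lem:descending_S-completeness}.

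Even for smooth $C_K$, you leave the flat Gorenstein contraction over $\Spec R[s,t]/(st-\pi)$ open as the ``main obstacle''. The paper never builds it by hand: the Cartesian square with $\mu$ and the universal contraction $\Psi\colon R\to\Gun$ over the Olsson fan supply every such family at once. The natural repair is to adopt that global $\overline{ST}$-construction and descend.

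\textbf{The other two steps.} Your $\Theta$-completeness route is a workable alternative, and it avoids the root-stack subtleties of \Cref{prop:descending-theta-complete}. However, its key step is unstated: you need the $R$-family to stay inside $\oTail{P}$ or $\oEll{P}$. This holds for the following reasons.
\begin{itemize}
\item The atom at the closed point of $\Theta_K$ forces $0<c(P)<1$.
\item In case (i), \Cref{prop:core-sing-specialization} and the BKN condition give $\core(C_k)=P$, since a strictly coarser core level would have $c=1$.
\item In case (ii), they give $\sing(C_k)=P$, since a strictly finer singularity level with $c>0$ would force $c(P)=1$.
\end{itemize}
The punctured family then factors through a stratum admitting a good moduli space over $M_{0,P}$ (\Cref{rmk:M0P is gms}), hence a $\Theta$-complete one. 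The filling lies in the closed $\Atom{P}$ (\Cref{rmk:close_atoms}), so no other level $P'$ can occur.

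Your universal closedness argument matches the paper's radius argument ($r_{\min}\le r_\Delta\le r_{\max}$). Because it goes through \Cref{lem:generic_univ_closed}, though, it inherits the $S$-completeness gap.
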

\begin{proof}
    We will show that $\oM_{1,n}(c)$ is dominated by an algebraic stack with the desired properties, and descend them by \Cref{lem:descending_closedness}, \Cref{lem:descending_S-completeness} and \Cref{prop:descending-theta-complete}.
    
    \paragraph{\textbf{Universal radii}} We recall the main construction of \cite{RSPW1}. Let $C\to S$ be a log curve of arithmetic genus one with tropicalisation $\Gamma$. This means that $C$ is a prestable curve endowed with a standard logarithmic structure over the logarithmic scheme $(S,M_S)$, and $\Gamma$ is the dual graph of (the fibres of) $C$ with edge lengths in $\overline{M}_S$. Let $\delta\colon\Gamma\to\overline{M}_S$ be the conewise-linear function ``distance from the core'', i.e. the function that takes value $0$ on the core and has slope $1$ on every edge not contained in the core (in the outgoing direction) 
    
    Heuristically, it is helpful to think about standard tropical curves by picking a monoid homomorphism $\overline{M}_S\to \RR_{\geq0}$. The distance function then becomes a conewise-linear function $\delta\colon\Gamma\to\RR_{\geq0}$. The ray $\mathbb{R}_{\geq0}$ can be subdivided along the images of the vertices of $\Gamma$, and in turn $\Gamma$ can be subdivided along the preimages of these newly introduced vertices of $\mathbb{R}_{\geq0}$, so as to ensure that $\delta$ keeps mapping cones to cones. All of these subdivisions can be performed in families of tropical curves, but they require subdividing the base as well (i.e., locally extending the monoid $\overline{M}_S$), in a process known as universal semistable reduction \cite{Molcho}. This produces a logarithmic blow-up $\Mrad$ of $\oM_{1,n}$.
    
    More precisely, $\Mrad$ represents the following stack over log schemes:
 \[ \Mrad(S)=\{C\to S\text{ log curve }| \text{ the set }\{\delta(v)|v\in V(\Gamma)\}\text{ is comparable in }\overline{M}_S\} .\]
Over $\Mrad$ we have tautological families: \[\widetilde C\to\widetilde\Gamma\xrightarrow{\delta} R.\]
 Here $R$ corresponds to the aforementioned subdivision of $\mathbb{R}_{\geq0}$, and it can be thought of as an algebraic stack (Olsson fan \cite{CavalieriChanUlirschWise, BCW}) over $\Mrad$, that is a logarithmic blowup of $\Mrad\times [\Aaff^1/\Gm]$. On the other hand, $\widetilde C$ is a semistabilisation of $C$ induced by the subdivision $\widetilde \Gamma$ of $\Gamma$. 
 
 A point $r$ of $R$ determines a disc of radius $r$ around the core of $\widetilde C$, and contracting its strict interior (either by the method of \cite[Proposition 3.7.6.1]{RSPW1} or by \cite{Boz21}) we obtain a Gorenstein contraction $\overline C$, thus defining a morphism:\footnote{One of the original motivations for \cite{RSPW1} was to construct a universal resolution of indeterminacy for the birational maps $\oM_{1,n}(m_1)\dashrightarrow\oM_{1,n}(m_2)$ between Smyth's spaces. The relevance of the distance function $\delta$ depends on Smyth's key insight that the semistable tails of a Gorenstein singularity of genus one are \emph{balanced} subcurves \cite[Definition 2.11]{Smyth}: in tropical terms, discs around the core.} \[\Psi\colon R\to\Gun.\]
 For any BKN function $c$, let $R(c)$ denote the preimage of $\oM_{1,n}(c)$ under $\Psi$, and let $\Psi_c$ denote the restriction.

 \paragraph{\textbf{Properties of $R(c)$}}
 The combinatorics of the radius is related to that of the singularity as follows: the inner valence, i.e. the number of edges rooted in the vertex of genus one, corresponds to the singularity level of $\overline C$, and the outer valence to its core level. The outer valence always refines the inner valence (cf. \Cref{rem:sing-core-inequality}), strictly if and only if the circle passes through a stable vertex of $\widetilde C$, if and only if $\overline C$ has finite automorphism group.
 \begin{figure}[h]
\begin{minipage}{0.45\textwidth}
    \begin{tikzpicture}[grow=right,
  level distance=2.5cm,
  level 1/.style={sibling distance=1.5cm},
  level 2/.style={sibling distance=1cm}]
  
  \node (root1) [circle, draw] {$1$}
    child {node[circle, fill=black, inner sep=2pt] {}
      child {node[circle, fill=black, inner sep=2pt] {}
        child {node {1}}
        child {node {2}}}
      child {node {3}}
    }
    child {node[circle, fill=black, inner sep=2pt] {}
      child {node {4}}
      child {node {5}}
    };

    \node (root2) [below=2.5cm of root1, circle, fill=black, inner sep=2pt, label=above:{$r_0$},label=below:{$0$}] {}
        child {node (r1) [circle, fill=black, inner sep=2pt,label=above:{$r_1$}, label=below:{$r_{\min}{\color{blue}=r_{\max}}$}] {}
            child {node (r2) [circle, fill=black, inner sep=2pt,label=above:{$r_2$},label=below:{${\color{red}r_{\min}=}r_{\max}$}] {}
                child {node[label=above:{$r_3$},label=below:{$+\infty$}] {}}
                }};
        \node(n1)[below =1cm of r1]{\includegraphics[width=2.5cm]{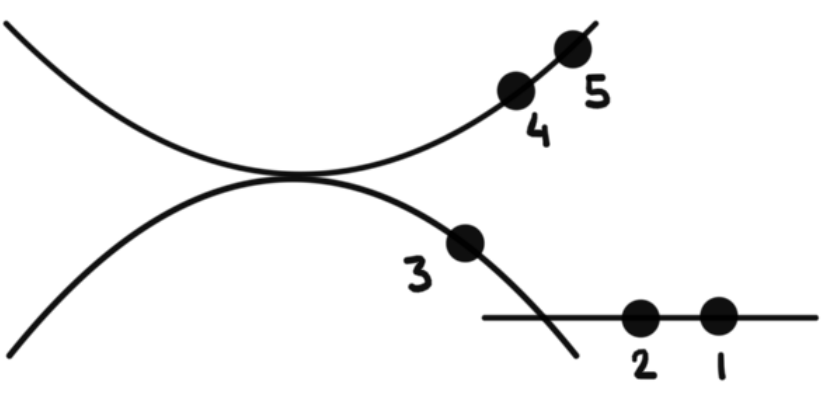}};
        \node[on grid=true, below right=3cm of n1]{\includegraphics[width=2.5cm]{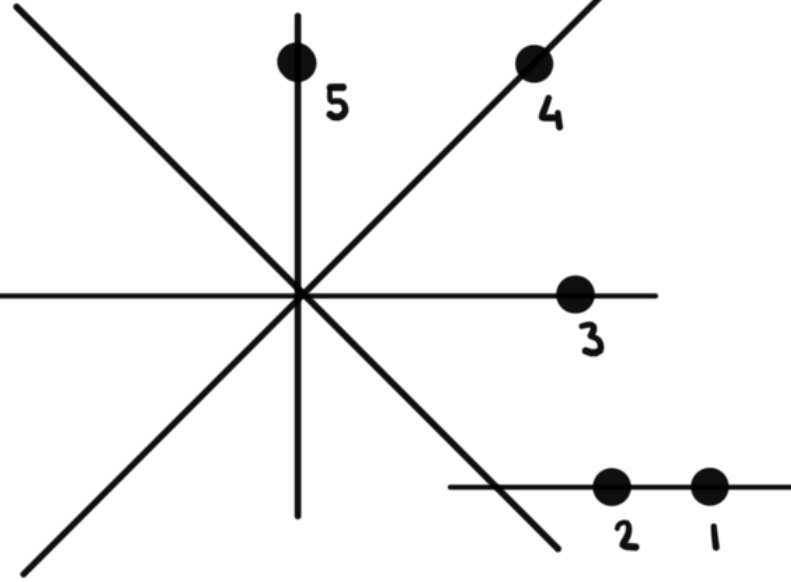}};
        \node[below =1cm of r2]{\includegraphics[width=2cm]{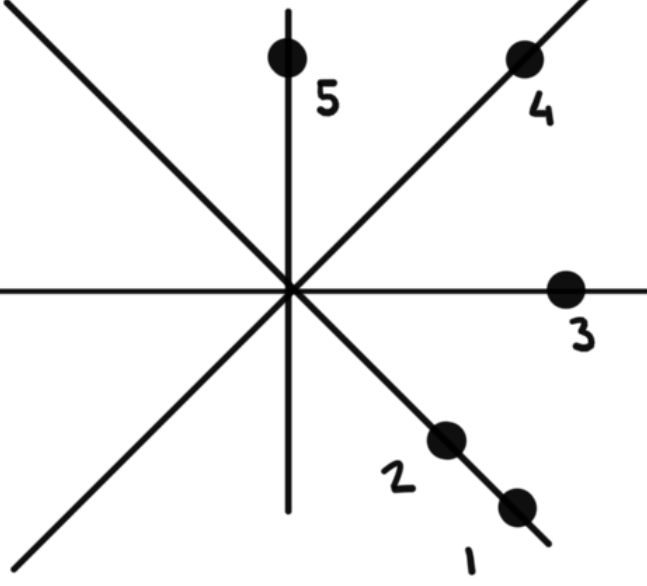}};
\end{tikzpicture}
\end{minipage}\hfill
\begin{minipage}{0.45\textwidth}
\begin{center}
\begin{tabular}{ c | c | c | c }
 $P_0$ & $P_1$ & $P_2$ & $P_3$ \\
 $[5]$ & $\{123|45\}$ & 
 $\{12|3|4|5\}$ & $\disc{[5]}$ \\\hline 
 $c_{P_0}=1$ &  $c_{P_1}=1$ & $c_{P_2}=\begin{cases}
     {\textcolor{red} 1} \\
     1/2 \\
     {\textcolor{blue} 0} \\
 \end{cases}$ & $c_{P_3}=0$
\end{tabular}
\end{center}
\end{minipage}
\caption{An example of levels and radii.}
\end{figure}

  For every cone of (the tropicalisation of) $\Mrad$, as the radius $r$ increases from $0$ to $+\infty$  we obtain a finite ordered chain of partitions $P_0=[n] \prec P_1\prec \ldots\prec P_l\prec P_{l+1}=[n]^{\text{disc}}$ recording the inner valence of the circle of radius $r$, so 
  $l$ is the number of different (positive) distances of stable vertices from the core. Conversely, every one of these partitions $P_i$ 
  determines a maximal stable radius $r_i\leq+\infty$ such that the circle of radius $r_i$ has inner valence coarser than or equal to $P_i$.
  
  The  BKN function $c$ determines two radii:
  \[r_{\min}=\max\{r_i:c_{P_i}=1\}\quad\text{and}\quad r_{\max}=\max\{r_i:c_{P_i}>0\},\]
  such that $r_{\min}\leq r_{\max}$ and they are adjacent (because of the BKN property), meaning that there is no other $r_i$ strictly between them. Notice moreover that $r_{\min}<r_{\max}$ happens exactly when $c$ takes value $1/2$ on the partition corresponding to $r_{\max}$.
  By construction, $R(c)$ is the segment $[r_{\min},r_{\max}]$ over $\Mrad$.
  
  Every partition $S$ corresponds to a divisor of $\Mrad$, whose generic point represents a curve with smooth core and $k$ rational tails, all of them at the same distance from the core, where $k$ is the number of parts of $S$ of cardinality greater than or equal to $2$. Let $c$ be a BKN function, and let $d$ denote the number of partitions on which $c$ takes value $1/2$. Recall that giving a map to $\Theta$ corresponds to picking a Cartier divisor: let $\gamma_c\colon\Mrad\to\Theta^d$ denote the map classifying the corresponding divisors.
  The above description implies that $R(c)$ fits 
  into the following diagram with a Cartesian square, cf. \cite[p. 160]{BKN}:
\bcd
\oM_{1,n}(c)& \ar[l,"\Psi_c"] R(c)\ar[r]\ar[d] & \Theta^{2d}\ar[d,"\mu"] \\
& \oM_{1,n}^{\text{rad}}\ar[r,"\gamma_c"] &\Theta^d,\\
\ecd
 where $\mu$ is induced by the multiplication map $\Aaff^2\to\Aaff^1$ (we could say that $R(c)$ is obtained from $\Mrad$ by means of a global $\overline{ST}$-construction).
 Since $\Mrad$ is a proper Deligne--Mumford stack, it is universally closed, $\Theta$- and $S$-complete. As $\mu$ is universally closed, $\Theta$- and $S$-complete, we deduce from the cartesianity of the square that $R(c) \to \Mrad$ is universally closed, $\Theta$- and $S$-complete. This implies that $R(c)$ itself is universally closed, $\Theta$- and $S$-complete.

 \paragraph{\textbf{Properties of $\Psi_c$}}
The map $\Psi_c$ is $\Theta$- and $S$-complete thanks to \Cref{lem:map-in-the-middle}; it is therefore enough to show that $R(c)$ is generically closed over $\overline{\Mcal}_{1,n}(c)$, in order to conclude that $\Psi_c$ is universally closed (\Cref{lem:generic_univ_closed}). Thus, we can test the valuative criterion for generically smooth families. 

Consider a family of $c$-stable curves $\oC$ over the spectrum $\Delta$ of a DVR, with smooth generic fiber. We may assume that the special fibre contains an elliptic $m$-fold point, otherwise there is nothing to prove. Apply semistable reduction to obtain a contraction $\rho\colon\widetilde C\to \oC$ (possibly after a ramified base-change on $\Delta$). The classifying map $\Delta\to\oM_{1,n}$ lifts to $\Mrad$ automatically, because $\Delta$ has a rank $1$, valuative logarithmic structure (in other words, the image of $\Delta\to\oM_{1,n}$ is not contained in the blow-up centre of $\Mrad\to\oM_{1,n}$). The tropicalisation of $\Delta$ is a ray mapping to the interior of some cone of $\Mrad$; let $r_{\min}$ and $r_{\max}$ be defined accordingly as above. The exceptional locus of $\rho$ is a balanced subcurve of genus one, i.e. a disc around the core; call its radius $r_\Delta$. The $c$-stability core condition guarantees that $r_\Delta\geq r_{\min}$, and the singularity condition guarantees that $r_{\Delta}\leq r_{\max}$, hence $\Delta$ lifts to $R(c)$, thus concluding the proof that $\Psi_c$ is (generically) universally closed.

Surjectivity of $\Psi_c$ can be proved in the same way: for a $c$-stable curve $\oC$ over $\Spec(k)$, pick a one-parameter smoothing over $\Delta$, and lift $\Delta$ to $R(c)$, finding in particular a preimage of $\oC$.

\paragraph{\textbf{Conclusion}} We may apply \Cref{lem:descending_closedness}, \Cref{lem:descending_S-completeness} and \Cref{prop:descending-theta-complete} to make the properties of being universally closed, $\Theta$- and $S$-complete descend along $\Psi_c$, implying that $\overline{\Mcal}_{1,n}(c)$ is universally closed, $\Theta$- and $S$-complete.
\end{proof}

We move on to proving the converse direction of \Cref{theo:classification}. First, we show that any universally closed open substack is a union of core and singularity strata.

\begin{lemma}\label{lem:union-of-strata}
    Let $\mathcal{U}\subset \Gun$ be an open substack which is universally closed. The following  hold:
    \begin{itemize}
        \item[a)] if $\mathcal{U} \cap \Atom{P}\neq \emptyset$ then $\Atom{P}\subset \Tail{P}\subset \mathcal{U}$;
        \item[b)] if $\mathcal{U} \cap \Tail{P}\neq \emptyset$ then $\Tail{P}\smallsetminus \Atom{P}\subset \mathcal{U}$.
    \end{itemize}
    Analogous statements hold with $\Ell{P}$ in place of $\Tail{P}$. If in addition $\Ucal$ is $S$-complete:
     \begin{itemize}
        \item[c)] If $\mathcal{U} \cap \Tail{P}\neq \emptyset$ and $\mathcal{U} \cap \Ell{P}\neq \emptyset$ then $\Atom{P}\subset \mathcal U$.
    \end{itemize}
\end{lemma}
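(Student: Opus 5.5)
We argue stratum by stratum, using the explicit descriptions $\oTail{P}=\Min{p_0}\times M_{0,P}$ and $\oEll{P}$ as a $\Gm$-quotient of a vector bundle over $M_{0,P}$, together with the good moduli space maps $\Tail{P}\to M_{0,P}$ and $\Ell{P}\to M_{0,P}$ of \Cref{rmk:M0P is gms}. The key topological input is that $\Atom{P}$ is closed (\Cref{rmk:close_atoms}). Since $\Ucal$ is open, it contains every generalization of each of its points. So statements a) and b) reduce to the following: describe the closed points and specializations inside $\Tail{P}$, and use universal closedness to force $\Ucal$ to contain enough closed points.

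For a), let $x\in\Ucal\cap\Atom{P}$ lie over $y\in M_{0,P}$. Every point of $\Tail{P}$ over $y$ (a minimal $p_0$-pointed genus one core attached to the fixed tails) isotrivially degenerates to the unique atom over $y$. This is the $\Theta$-degeneration of type (i) in \Cref{prop:core-sing-specialization}, where we scale the core to an elliptic $p_0$-fold point, using that $\Min{p_0}$ has the point $[C_0]$ in the closure of every point. So the whole fibre of $\Tail{P}\to M_{0,P}$ over $y$ generalizes $x$, and by openness it lies in $\Ucal$. To move in the base, we show that $\Ucal\cap\Atom{P}$ is both open and closed in $\Atom{P}$, which is connected since it is a gerbe over a product of $\oM_{0,\star_i\sqcup P_i}$.
\begin{itemize}
\item Openness is clear, since $\Ucal$ is open.
\item For closedness, take a DVR $\Delta\to M_{0,P}$ whose generic point lies in the image of $\Ucal\cap\Atom{P}$. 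Lift the generic point to a smooth-core $P$-bridge in $\Ucal$ and apply the existence part of the valuative criterion in $\Ucal$. The limit lies in $\Gun$ over the special point of $\Delta$, and its core level is $\preceq P$. The tails are still present in the limit, because they are nodally attached with fixed moduli; after stabilising they survive. So the limit lies in $\cTail{P}$ over $y_0$, and its specialization to the atom over $y_0$ stays in $\Ucal$ provided the limit lies in $\Tail{P}$ itself.
\end{itemize}
Hence $\Ucal\supset\Tail{P}$.

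For b), the same approach works without the atom. Take $x\in\Ucal\cap\Tail{P}$ non-atomic over $y$. Its fibre contains non-atomic points that generalize $x$, or are generalized by it, within the fibre $\Min{p_0}\smallsetminus[C_0]$. That space is a weighted projective stack (\Cref{rmk:minimal}), so it is proper and irreducible. A valuative-criterion argument inside $\Ucal$ over the fibre and over $M_{0,P}$ therefore shows that all non-atomic points lie in $\Ucal$.

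The $\Ell{P}$ versions follow symmetrically. Here we use type (ii) degenerations, which vary the crimping datum, and the description of $\Ell{P}$ as a $\Gm$-quotient of a vector bundle. The atom is the zero section, i.e. the closed orbit, and the non-atomic points are the projectivized bundle, which is proper over $M_{0,P}$.

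For c), pick $x_1\in\Ucal\cap\Tail{P}$ and $x_2\in\Ucal\cap\Ell{P}$. By b) we may take both non-atomic and lying over the same point $y\in M_{0,P}$. Each specializes isotrivially to the same atom $x_0$ over $y$, through $\Theta$-maps of opposite weights: the deformation of the singularity versus the smoothing of the nodes $\star_i$, as in the proof of \Cref{prop:core-sing-specialization}. Now build a map $\overline{ST}_R\smallsetminus 0\to\Ucal$. Take a one-parameter family over $R$ whose generic fibre is a common generalization of both lying in $\Ucal$, for example a curve with the nodes $\star_i$ smoothed and the singularity partially smoothed, with the two special fibres $x_1$ and $x_2$. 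Concretely, this comes from the $\Gm$-equivariant chart $\overline W=\on{Def}_q\times\prod\on{Def}_{\star_i}$, by setting $s=$ the $\on{Def}_q$ coordinate and $t=$ the $\on{Def}_{\star_i}$ coordinate with $st=\pi$. $S$-completeness fills in the point $0$, which maps to $x_0$, so $x_0\in\Ucal$, and a) gives $\Atom{P}\subset\Ucal$.

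The main obstacle I expect is making this $\overline{ST}$-construction rigorous: we must check that the generic fibre of the family lies in $\Ucal$, and that the two punctured branches really land on $x_1$ and $x_2$ rather than on other points of the strata.
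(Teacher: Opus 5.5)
Your proof of b) has a genuine gap. In fact b) cannot be proved as stated.

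\textbf{Part b).} The properness of the fibre $\Min{p_0}\smallsetminus\{[C_0]\}$ does not help. The locus $\Ucal\cap\Tail{P}$ is open but not closed in $\Ucal$. So for a family $f\colon\Spec R\to\Tail{P}\smallsetminus\Atom{P}$ from a point of $\Ucal$ to $C_0$, universal closedness of $\Ucal$ only yields \emph{some} limit $C_0'\in\Ucal$ of the generic fibre. Since $\Gun$ is not separated, nothing forces $C_0'\simeq C_0$, and comparing the two limits is the whole content of b). The paper tries to do this by placing $f'$ in an auxiliary $\oM_{1,n}(c)$. It then uses separatedness or $S$-completeness of that stack, together with the monotonicity of core and singularity levels, to get $C_0\rightsquigarrow C_0'$, hence $C_0\in\Ucal$ by openness.

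However, b) is false as literally stated. Take $n=3$, $c(\{123\})=1$ and $c=0$ on every other partition (Smyth's $\oM_{1,3}(1)$), $\Ucal=\oM_{1,3}(c)$ and $P=\disc{[3]}$, so that $\Tail{P}=\Min{3}$. Then $\Ucal$ is open and proper and contains the smooth $3$-pointed elliptic curves. It does not contain the minimal tacnodal curve with $p_1,p_2$ on one branch and $p_3$ on the other. That curve lies in $\Tail{P}\smallsetminus\Atom{P}$ and has singularity level $\{12|3\}$, where $c=0$.

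The $\Ell{P}$-analogue fails the same way. Take $c=1$ exactly on $\{123\}$ and $\{12|3\}$, and the cuspidal curve with $p_3$ on the cuspidal branch and a tail carrying $p_1,p_2$.

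This example also refutes the paper's claim that b) holds for every $\oM_{1,n}(c)$ ``by construction''. Its proof needs that claim to put $f$ in the same $\oM_{1,n}(c)$ as $f'$. What the classification actually uses is only a restricted claim: a curve of core level $P$ and singularity level $P'$ lies in $\Ucal$ whenever $\Ucal$ meets both $\Tail{P}$ and $\Ell{P'}$.

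\textbf{Part a).} The fibrewise step is fine, but your closedness argument is not. Lifting the generic point of $\Delta$ to a smooth-core $P$-bridge leaves the limit in $\Ucal$ uncontrolled: there is no map $\Gun\to M_{0,P}$, and the core level may become strictly coarser. Even if the limit lies in $\Tail{P}$ over $y_0$, its specialisation to the atom over $y_0$ need not be in $\Ucal$, since open substacks are only closed under generalisation. Indeed $\oM_{1,n}(c)$ with $c(P)=0$ contains non-atomic $P$-bridges over every point of $M_{0,P}$ and no $P$-atom.

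Either lift to atoms instead, or argue as the paper does. $\Atom{P}$ is closed, so $\Ucal\cap\Atom{P}$ is closed in $\Ucal$, hence universally closed. Being open in the gerbe $\Atom{P}$, it is flat over $M_{0,P}$. Its image is therefore open and closed in the connected $M_{0,P}$.

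\textbf{Part c).} Your construction in the equivariant model $\on{Def}_q\times\prod\on{Def}_{\star_i}$ is sound, and more direct than the paper's, which uses a smoothing, semistable reduction and a Tasin--Viviani degeneration. Your two worries dissolve:
\begin{itemize}
\item Lift $x_1\rightsquigarrow x_0$ and $x_2\rightsquigarrow x_0$ to equivariant maps $s\mapsto s\cdot v_1$ into $\on{Def}_q\times\{0\}$ and $t\mapsto t\cdot w_2$ into $\{0\}\times\prod\on{Def}_{\star_i}$. Take $(s,t)\mapsto(s\cdot v_1,t\cdot w_2)$, after reparametrising if the weights differ. The branches then land exactly on $x_1,x_2$.
\item The generic fibre is in $\Ucal$ by openness.
\end{itemize}
You do need two additions. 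First, fillings are unique because $\Gun$ has affine diagonal; this is what forces the filling in $\Ucal$ to send $0$ to $x_0$. Second, place $x_1,x_2$ over a common $y$ using flatness of $\Tail{P}\to M_{0,P}$ and $\Ell{P}\to M_{0,P}$ and irreducibility of $M_{0,P}$, rather than b).
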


\begin{proof}
    We will only prove the result for $\Tail{P}$. The case of $\Ell{P}$ is entirely analogous.
    
    \begin{itemize}[leftmargin=.5cm]
        \item[a)]\label{pf-pt-a} If $\mathcal{U} \cap \Atom{P}\neq \emptyset$, then $\mathcal{U} \cap \Atom{P} \subset \Atom{P}$ is an open substack which is universally closed, because of \Cref{rmk:close_atoms}. We know that $\Atom{P}$ is a $\Gm$-gerbe over a smooth connected projective scheme $M_{0,P}$, therefore the composition $\Ucal \cap \Atom{P}\rightarrow M_{0,P}$ is universally closed and flat, thus surjective. This implies $\Atom{P} \subset \Ucal$. We thus have: 
    $$
    \begin{tikzcd}
    \Atom{P} \arrow[r, hook] \arrow[rd] & \Ucal \cap \Tail{P} \arrow[r, hook] & \Tail{P} \arrow[ld] \\
     & M_{0,P}    &                        
    \end{tikzcd}
    $$
where the two vertical maps are good moduli space morphisms (\Cref{rmk:M0P is gms}). 
For every $m\in M_{0,P}$, the substack $\Ucal\cap(\Tail{P})$ is open in the fibre over $m$, and it contains the unique closed point $(\Atom{P})_{m}$, thus it contains the whole fibre. This implies that $\Tail{P} \subset \Ucal$.

\item[b)] First of all, the statement holds for the open substacks of the form $\oM_{1,n}(c)$ by construction. 

Suppose that $\Ucal \cap \Tail{P} \neq \emptyset$ and, because of a), we can assume $\Ucal \cap \Atom{P}=\emptyset$. 
Since $\Tail{P}$ is irreducible and $\Ucal$ is open, $\Ucal$ contains the generic point $C_\eta$ of $\Tail{P}$. Let $C_0$ be any point of $\Tail{P}\smallsetminus\Atom{P}$: we want to show that $C_0$ belongs to $\Ucal$. Fix a morphism $f\colon\Spec(R)\to \Tail{P}$ witnessing the specialisation $C_\eta\rightsquigarrow C_0$.

Since $\Ucal$ is universally closed, up to a ramified base change, we may extend the generic point $f_{|\eta}$ to a morphism $f'\colon\Spec(R)\to\Ucal\cap\cTail{P}$ with special fibre $C'_0$ in $\Ucal$. 

Since the stacks of the form $\oM_{1,n}(c)$ form an open cover of $\Gun$, and since $\Spec R$ is quasi-compact, there exists a $c$ such that $f'$ factors through $\oM_{1,n}(c)$. Thus $\oM_{1,n}(c)$ contains $\Tail{P}$ and $f$ factors through $\oM_{1,n}(c)$ as well.

If $C'_0$ has finite automorphism group, then $c$ can be chosen so that $\oM_{1,n}(c)$ is a proper Deligne--Mumford stack, hence by separatedness $C_0\simeq C_0'$ is in $\Ucal$.

Otherwise, the $S$-completeness of $\oM_{1,n}(c)$ allows us to complete $(f,f')$ to a family over $\overline{ST}_R$; let $C_{00}$ denote the special fibre. We obtain the following diagram of specialisations

\bcd
& C_0, \begin{cases} \core(C_0)=P,\\ \sing(C_0)=P'\precneq P\end{cases} \ar[dr,squiggly,"\theta"] & \\
C_{\eta}, \begin{cases} \core(C_\eta)=P,\\ \sing(C_\eta)=-\infty\end{cases}\ar[ur,squiggly,"f"]\ar[dr,squiggly,"f'"] && C_{00} \\
& C_0', \begin{cases} \core(C_0')=P''\preceq P,\\ \sing(C_0')=P''\end{cases} \ar[ur,squiggly,"\theta'"] &
\ecd
where $\theta$ and $\theta'$ are specialisations along $\Theta$.
Applying \Cref{prop:core-sing-specialization}, we deduce the following inequalities:
\[\max\{P',P''\}\preceq\sing(C_{00})\preceq\core(C_{00})\preceq P''\]
We deduce that the ones above are all equalities. In particular, the degeneration $\theta'$ is trivial, and $C_0\rightsquigarrow C_0'\in \Ucal$, hence $C_0\in\Ucal$ too.
\item[c)] Consider an atom of level $P$, call it $C_{00}$, and let $\hat C$ denote the stabilisation of its seminormalisation. By moving off from $0$ in the crimping space, we can construct an isotrivial degeneration of a curve $C_0'$ of singularity level $P$ and seminormalisation $\hat C$ to $C_{00}$. Now choose a smoothing of $C_0'$ to $C_\eta$ over a discrete valuation ring $R$. By semistable reduction and up to a ramified cover of $R$, we may replace the elliptic $P$-fold point with an elliptic $P$-bridge $C_0$. Since we may assume that there is a birational contraction $C_0\to C_0'$, we see that the rational forest within $C_0$ is again determined by $\hat C$. Now $C_0$ admits an isotrivial degeneration to $C_{00}$ by \cite[Theorem 4.10]{TasinViviani}. This construction yields an $\overline{ST}_R\setminus0$-point of $\Ucal$, together with a filling in $\Gun$. By $S$-completeness of $\Ucal$, a filling exists in $\Ucal$ already; since $\Gun$ has affine diagonal, the filling is unique, so we conclude that $[C_{00}]\in\Ucal$.
\end{itemize}
\end{proof}
The next lemma motivates the very definition of a BKN function.
\begin{lemma}\label{prop:constraints}
 Let $\Ucal\subseteq \Gun$ be an open substack which is $\Theta$-complete, $S$-complete and universally closed. If $\Ucal$ contains an elliptic $m$-fold point of singularity level $P$, then it does not contain any elliptic $P'$-bridge, for $P'\precneq P$.
\end{lemma}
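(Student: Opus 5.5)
We argue by contradiction. Suppose $\Ucal$ contains a curve $C_1$ with $\sing(C_1)=P$ and a curve $C_2$ with $\core(C_2)=P'\precneq P$. The plan is to build one family over $\overline{ST}_R\smallsetminus 0$ with values in $\Ucal$ whose unique filling in $\Gun$ forces a curve of core level $P'$ and singularity level $P$. Since $\sing\preceq\core$ always holds (\Cref{rem:sing-core-inequality}), no such curve exists, which is the contradiction.

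\textbf{Step 1: reduce to convenient representatives.} By \Cref{lem:union-of-strata}(b) and its $\Ell{}$-analogue, $\Ucal$ contains all of $\Ell{P}\smallsetminus\Atom{P}$ and all of $\Tail{P'}\smallsetminus\Atom{P'}$. It also contains their closures inside $\Ucal$. So we may choose $C_1$ and $C_2$ freely within these strata.

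\textbf{Step 2: choose a common smoothing.} Take $C_\eta$ smooth, and take two DVR families over $R$:
\begin{itemize}
\item $f$, with generic fibre $C_\eta$ and special fibre in $\Ell{P}$;
\item $f'$, with the same generic fibre and special fibre in $\Tail{P'}$.
\end{itemize}
The most natural way to arrange this uses the radius picture from the proof of \Cref{prop:theta complete}. Fix a stable $\Delta$-family in $\Mrad$ whose tropical ray has stable vertices at radii realizing both $P'$ and $P$ as inner valences. Concretely, the chain of partitions along the ray contains $P'\prec\cdots\prec P$. Contracting discs of two different radii then gives $f$ and $f'$: the radius $r_P$ yields singularity level $P$, and a smaller radius in the slot of $P'$ yields an elliptic $P'$-bridge. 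To make both special fibres have finite automorphisms, we pick the radii so that the circles avoid coincidence of inner and outer valence. Then both families lie in $\Ucal$ by Step 1, and they glue to a map $\overline{ST}_R\smallsetminus 0\to\Ucal$ because they share the generic fibre.

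\textbf{Step 3: apply $S$-completeness and compare levels.} By $S$-completeness of $\Ucal$, this map extends to $\overline{ST}_R\to\Ucal$, with special point $C_{00}$. The filling is also the unique one in $\Gun$, since $\Gun$ has affine diagonal. We get isotrivial specializations $C_1\rightsquigarrow C_{00}$ and $C_2\rightsquigarrow C_{00}$. \Cref{prop:core-sing-specialization} then gives
\[
P=\sing(C_1)\preceq\sing(C_{00})\preceq\core(C_{00})\preceq\core(C_2)=P'.
\]
This says $P\preceq P'$, contradicting $P'\precneq P$.

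If this inequality argument goes through, $\Theta$-completeness is not even needed. The main obstacle is Step 2: we must justify that two different Gorenstein contractions of one semistable model give two families with the same generic fibre and the prescribed special fibres. We also need that both special fibres lie in the strata guaranteed by Step 1, rather than being atoms. I would handle this by choosing the radius family in the interior of a suitable cone of $\Mrad$, using Smyth's balanced-subcurve contraction. If atoms do appear, we instead pass to them via \Cref{lem:union-of-strata}(a) and (c).
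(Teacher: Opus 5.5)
Your argument is correct, but it takes a different route from the paper's. The paper first propagates the hypothesis inside $\Ucal$. Since $\Ell{P}\setminus\Atom{P}\subseteq\Ucal$, $\Ucal$ is open and $\Ell{P}\subseteq\cEll{P'}$, it gets $\Ell{P'}\setminus\Atom{P'}\subseteq\Ucal$. Then \Cref{lem:union-of-strata}(c), which is where $S$-completeness of $\Ucal$ is used, gives $\Atom{P'}\subseteq\Ucal$. Finally it applies $\Theta$-completeness to a $P'$-fold point that degenerates over a DVR to a $P$-fold point and isotrivially to the $P'$-atom.

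You instead apply $S$-completeness once, to two DVR limits of one smooth curve, and reach the same chain $P\preceq\sing(C_{00})\preceq\core(C_{00})\preceq P'$. This shows that $\Theta$-completeness is superfluous for this lemma, in line with the paper's remark that universal closedness and $S$-completeness imply $\Theta$-completeness. It also avoids part (c) and degenerations to atoms altogether. The price is that you must build the two families by hand. One explicit choice: a smooth elliptic core carrying rational tails indexed by the parts of $P'$. Each part of $P'$ that is split in $P$ begins with a vertex at distance $r_1$ branching into the parts of $P$, and the non-singleton parts of $P$ sit on vertices at distance $r_2>r_1$. Then the stable model (radius $0$) has core level $P'$, and the contraction at radius $r_2$ has singularity level $P$.

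Two small adjustments to your Step 2. First, the $P'$-bridge should come from a radius at which the circle passes through stable vertices; radius $0$ is simplest. A generic radius with inner valence $P'$ produces the $P'$-atom, which is not known to lie in $\Ucal$. Second, when every part of $P$ is a singleton you cannot avoid an atom for $f$, but you need not. In that case $\Ell{P}=\Atom{P}$ is a single point, which lies in $\Ucal$ by hypothesis, and Step 3 never uses finiteness of stabilisers.
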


In formulae, $\Ucal\cap\oEll{P}\neq\emptyset$ and $P'\precneq P$ imply $\Ucal\cap\oTail{P'}=\emptyset$.
\begin{remark}
   In particular, $\Gcal_{1,n}$ does not admit a good moduli space. To make it more concrete, let $n=2$: then the irreducible cusp admits:
  \begin{enumerate}
   \item an isotrivial degeneration to a curve with two components, a cusp and a $2$-marked $\PP^1$, glued transversally at general points, and
   \item a degeneration to the tacnode with one marking per branch.
  \end{enumerate}
  The latter admits no further degeneration; on the other hand, it is not a degeneration of the former (in such a degeneration the two markings would remain contained in a connected subcurve of genus zero). We deduce that $\Gcal_{1,2}$ is not $\Theta$-complete.
\end{remark}
\begin{proof}[Proof of \Cref{prop:constraints}]
Let $P'_1$ be a part of $P'$ obtained by joining two or more parts $P_1,P_2,\ldots$ of $P$.

Suppose that $\Ucal\cap\Tail{P'}\neq\emptyset$, and in particular $\Tail{P'}\smallsetminus\Atom{P'}\subseteq\Ucal$ by \Cref{lem:union-of-strata}. Since $\Ucal$ contains $\Ell{P}\setminus\Atom{P}$ (again by \Cref{lem:union-of-strata}), and since $\Ell{P}$ is in the closure of $\Ell{P'}$ - the open $\Ucal$ must contain $\Ell{P'}\setminus\Atom{P'}$ as well. But then, by $S$-completeness, $\Ucal$ must contain $\Atom{P'}$ as well. Now consider a (minimal) $P'$-fold point: on one hand, it admits a degeneration to a $P$-fold point; on the other, it admits an isotrivial degeneration to a $P'$-atom. Both degenerations are contained in $\Ucal$. By $\Theta$-completeness we should be able to fill in the diagram: on one hand, there is an elliptic point separating $P_1$ from $P_2$; on the other, there is a node separating $P_1'$ from the core, and this is a contradiction. In a diagram:
\bcd
C_\eta,\ \sing(C_\eta)=P'
\ar[d,squiggly,"\Theta"]\ar[r,squiggly] & C_0,\ \sing(C_0)=P\ar[d,squiggly,"\Theta"]\\
C_{\eta0},\ \sing(C_{\eta0})=\core(C_{\eta0})=P'\ar[r,squiggly]&C_{00},
\ecd

\[
P\preceq\operatorname{sing}(C_{00})
\preceq\operatorname{core}(C_{00})
\preceq P'\precneq P.
\]
\end{proof}

Thanks to the previous lemma, we may check that the following is a BKN function: 
$$ c_{\mathcal{U}}(P)=\begin{cases}
    1 \qquad {\rm if}\,\,\,\mathcal{U} \cap \Ell{P} \neq \emptyset \,\, {\rm and} \,\, \mathcal{U} \cap \Tail{P} =\emptyset , \\
     \frac{1}{2} \qquad {\rm if}\,\,\,\mathcal{U} \cap \Ell{P} \neq \emptyset \,\, {\rm and} \,\, \mathcal{U} \cap \Tail{P} \neq\emptyset , \\
    0 \qquad {\rm if}\,\,\, \mathcal{U} \cap \Ell{P} = \emptyset ;
\end{cases}
$$
or, equivalently,
$$ c_{\mathcal{U}}(P)=\begin{cases}
    1 \qquad {\rm if}\,\,\, \Ell{P} \subseteq \Ucal \,\, {\rm and} \,\, \mathcal{U} \cap \Tail{P} =\emptyset , \\
     \frac{1}{2} \qquad {\rm if}\,\,\,\Atom{P}\subseteq \Ell{P}\cup\Tail{P}\subseteq\Ucal , \\
    0 \qquad {\rm if}\,\,\, \Tail{P} \subseteq \Ucal \,\, {\rm and} \,\, \mathcal{U} \cap \Ell{P} = \emptyset .
\end{cases} $$

\begin{proposition}\label{prop:classification-of-proper-gms}
   Let $\mathcal{U}\subseteq \Gun$ be an open substack with a proper good moduli space. Then $\Ucal=\oM_{1,n}(c_{\Ucal})$.  
\end{proposition}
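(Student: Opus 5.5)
By \Cref{gms_criterion}, $\Ucal$ is $\Theta$-complete, $S$-complete and universally closed, so \Cref{lem:union-of-strata} and \Cref{prop:constraints} both apply. First I check that $c_{\Ucal}$ is a BKN function. Suppose $P_1\prec P_2$ and $c_{\Ucal}(P_2)>0$. Then $\Ucal\cap\Ell{P_2}\neq\emptyset$, so \Cref{prop:constraints} gives $\Ucal\cap\Tail{P_1}=\emptyset$. Since $\cEll{P_1}\supseteq\Ell{P_2}$ and $\Ucal$ is open, $\Ucal$ meets $\Ell{P_1}$. Hence $c_{\Ucal}(P_1)=1$.

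Next I use that $\Gun$ is the disjoint union of the strata $\Tail{P}$, indexed by core level, and that each point lies in exactly one $\Ell{P'}$ or has no elliptic singularity. A point of $\Gun$ is thus a curve $C$ with $\sing(C)=P'\preceq P=\core(C)$, or with $\sing(C)=-\infty$. It then suffices to prove two things: $C\in\Ucal$ forces $c_{\Ucal}(P)<1$ and $c_{\Ucal}(P')>0$; conversely, these two conditions force $C\in\Ucal$.

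\emph{Inclusion $\Ucal\subseteq\oM_{1,n}(c_{\Ucal})$.} Take $C\in\Ucal$. Then $\Ucal\cap\Tail{P}\ni C$, so $c_{\Ucal}(P)\neq 1$. If $C$ is singular, $\Ucal\cap\Ell{P'}\ni C$, so $c_{\Ucal}(P')\neq 0$. This direction is immediate from the definitions.

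\emph{Inclusion $\oM_{1,n}(c_{\Ucal})\subseteq\Ucal$.} Let $C$ be $c_{\Ucal}$-stable, with core level $P$ and singularity level $P'$.
\begin{itemize}
\item If $C$ is an atom ($P=P'$), then $0<c_{\Ucal}(P)<1$. This means $\Ucal$ meets both $\Tail{P}$ and $\Ell{P}$, and \Cref{lem:union-of-strata}(c) gives $C\in\Atom{P}\subset\Ucal$.
\item If $C$ is not an atom, $c_{\Ucal}(P)<1$ gives one of two cases. If $c_{\Ucal}(P)=0$, then $\Tail{P}\subseteq\Ucal$ by the second description of $c_{\Ucal}$. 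If $c_{\Ucal}(P)=1/2$, then $\Ucal\cap\Tail{P}\neq\emptyset$ and \Cref{lem:union-of-strata}(b) gives $\Tail{P}\smallsetminus\Atom{P}\subseteq\Ucal$. Either way $C\in\Ucal$.
\end{itemize}

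The only real subtlety is the equivalence of the two descriptions of $c_{\Ucal}$, which I would verify along the way. When $\Ucal$ meets $\Tail{P}$ but not $\Ell{P}$, I must show that $\Atom{P}$ is not needed. Indeed $\Atom{P}\subset\Ell{P}$, so $\Ucal\cap\Atom{P}=\emptyset$, and (b) then yields $\Tail{P}\smallsetminus\Atom{P}=\Tail{P}\cap\Ucal$. In that case the $c_{\Ucal}$-stable curves of core level $P$ are exactly the non-atoms, which is consistent because an atom of level $P$ would require $c_{\Ucal}(P)>0$.

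Nonemptiness of the relevant intersections is exactly what the definition of $c_{\Ucal}$ encodes, so the argument is purely combinatorial bookkeeping once the two lemmas are in hand.
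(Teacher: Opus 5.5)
\textbf{Verdict: same structure as the paper, but with one unjustified step.} You follow the paper's argument: double inclusion at geometric points, with \Cref{lem:union-of-strata}(b) for non-atoms and (c) for atoms. Your check that $c_{\Ucal}$ is a BKN function is fine.

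\textbf{The gap: the case $c_{\Ucal}(P)=0$ in the second inclusion.} By definition, $c_{\Ucal}(P)=0$ only says $\Ucal\cap\Ell{P}=\emptyset$. It says nothing about $\Ucal\cap\Tail{P}$, yet you need $\Ucal\cap\Tail{P}\neq\emptyset$ to invoke (b).
\begin{itemize}
\item Your closing claim that ``nonemptiness of the relevant intersections is exactly what the definition of $c_{\Ucal}$ encodes'' is therefore false in this case.
\item You cannot fall back on the second description either. Read literally it is inconsistent, because $\Atom{P}\subseteq\Tail{P}\cap\Ell{P}$; in particular your sentence ``if $c_{\Ucal}(P)=0$ then $\Tail{P}\subseteq\Ucal$'' is false. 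Your last paragraph half-notices this, but only treats the case where $\Ucal$ is already known to meet $\Tail{P}$.
\item The paper's proof skips the same point, with ``$c_{\Ucal}(P)<1$ and thus $\Ucal\cap\Tail{P}\neq\emptyset$''.
\item This step is not bookkeeping. For $\Ucal=\emptyset$ one has $c_{\Ucal}\equiv 0$, so $\oM_{1,n}(c_{\Ucal})=\oM_{1,n}$. Hence nonemptiness of $\Ucal$ and a genuine geometric input must enter.
\end{itemize}

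\textbf{How to close it.} Assume $\Ucal\neq\emptyset$ and $\Ucal\cap\Ell{P}=\emptyset$.
\begin{itemize}
\item Since $\Ucal$ is open and $\cEll{P}=\bigsqcup_{P\preceq P'}\Ell{P'}$, the stack $\Ucal$ misses all of $\cEll{P}$.
\item $\Ucal$ is a nonempty open in the irreducible $\Gun$, so it contains the generic point $C_\eta$, a smooth curve. Choose a DVR family $C_\eta\rightsquigarrow C_0$ with $C_0$ a general $P$-bridge (smooth core, one rational component per non-singleton block).
\item Universal closedness of $\Ucal$ gives, after base change, a second limit $C_0'\in\Ucal$ of $C_\eta$.
\item If $C_0'$ is nodal, it is stable, so $C_0'\cong C_0\in\Tail{P}$ by separatedness of $\oM_{1,n}$.
\item Otherwise, use Smyth's description of limits, as in the proof of \Cref{prop:theta complete}. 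Then $C_0'$ is obtained from a semistable model by contracting a balanced genus-one subcurve, i.e.\ a disc around the core. Its singularity level is the inner valence of a circle of positive radius, and this refines $\core(C_0)=P$: markings on one branch lie in one component of the complement of the core. So $C_0'\in\cEll{P}$, which is excluded.
\item Hence $C_0'\in\Ucal\cap\Tail{P}$. Then (b) gives $\Tail{P}\smallsetminus\Atom{P}\subseteq\Ucal$, which is exactly what your non-atom case requires.
\end{itemize}
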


\begin{proof}
We prove the double containment at the level of geometric points. Let $u$ be a geometric point of $\Ucal$ and let $P:=\core (u)$. Then $\Ucal \cap \mathbf{Core}_{P}\neq \emptyset$, thus $c_{\Ucal}(P)<1$. If the curve associated to $u$ does not contain elliptic $m$-fold points, then by definition it is in $\oM_{1,n}(c_{\Ucal})$. Otherwise, suppose $\sing(u)=P'$. Since $u \in \Ucal \cap \Ell{P'}$, we have $c_{\Ucal}(P')>0$. Therefore $u \in \oM_{1,n}(c_{\Ucal})$.

For the other inclusion, let $u \in \oM_{1,n}(c_{\Ucal})$ be a geometric point. If $P:=\core(u)$, then $c_{\Ucal}(P)<1$ and thus $\Ucal \cap \Tail{P}\neq \emptyset$. By \Cref{lem:union-of-strata}, we get $\Tail{P} \smallsetminus\Atom{P}\subseteq \Ucal$. If $u$ does not have elliptic $m$-fold points, then $u \in \Tail{P} \smallsetminus\Atom{P}\subseteq \Ucal$. Otherwise, set $P':=\sing(u)$: if $P'\precneq P$, we conclude again that $u \in \Ucal$. Finally, if $P=P'$, then $u\in \Atom{P}\subseteq \Ucal$ by \Cref{lem:union-of-strata} and we win.
\end{proof}
\section{To be, or not to be projective}
In this section we discuss the line bundles on the moduli stacks $\oM_{1,n}(c)$, the conditions under which they descend to the good moduli spaces, and the projectivity of the latter. 

\subsection{Line bundles and divisors on moduli stacks and spaces}
Given a subset $S\subset [n]$, we denote by $\disc{S}$ the partition of $[n]$ given by $\{S,\{i\}_{i\in [n]\smallsetminus S}\}$. Note that $\cTail{\disc{S}}$ is the divisor of rational tails marked by $S$; we shall denote it by $\Delta_{0,S}$, and its class by $\delta_{0,S}$.
We denote by $\lambda$ the class of the Hodge line bundle $\Hcal=\pi_*\omega_{\Ccal/\Gun}$ over $\Gcal_{1,n}$, and by $\psi_i$ the conormal line bundle of the $i^{\rm th}$ universal section. Furthermore, we set $\delta_0=\sum \delta_{0,S}$, $\delta=\delta_{\rm irr}+\delta_0$, and $\psi=\sum \psi_i$.
With a little abuse of notation, we will denote by the same symbols the restrictions of the above classes to any open substack of $\Gcal_{1,n}$. 

The good moduli spaces of $\oM_{1,n}(c)$ are denoted $\cM(c)$, and the image of $\cTail{P}$ (respectively $\cEll{P}$) is denoted $\overline{\mathrm{Core}}_{P}$ (respectively $\overline{\mathrm{Sing}}_P$). Similarly, the good moduli space of the stack $\Min{m}$ is denoted $\mathrm{Min}_m$. Since we have little control on finite automorphism groups, we will mostly be concerned with the \emph{rational} Picard groups of the moduli spaces.

\begin{lemma}\label{lem:lb_on_Gun}
    The stack of pointed reduced Gorenstein curves of genus one has the following properties.
    \begin{enumerate}
        \item $\Gun$ is normal and Gorenstein, with klt singularities if and only if $n\leq 10$.
        \item The Picard group of $\Gun$ is freely generated by $\lambda$ and $\delta_{0,S}$ for $S\in[n]^n_2$.
        \item The following relations hold in the Picard group:
        \begin{align*}
            &\delta_{\rm irr}=12\lambda, && \\
            &\psi_i=\lambda+\sum_{i\in S\in [n]_2^n}\delta_{0,S}, &\psi=n\lambda+\sum_{S\in [n]_2^n}\lvert S\rvert \delta_{0,S},\\
            &K_{\Gun}=13\lambda-2\delta+\psi=(n-11)\lambda+\sum_{S\in [n]_2^n}(\lvert S\rvert-2)\delta_{0,S}\\
             &K_{\Gun}+\psi+a\lambda=(2n-11+a)\lambda+\sum_{S\in [n]_2^n}(2\lvert S\rvert-2)\delta_{0,S}&&
        \end{align*}
    \end{enumerate}
\end{lemma}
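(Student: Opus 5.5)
We plan to treat the three parts separately, relying throughout on the explicit local structure of $\Gun$ and on the open cover by the stacks $\oM_{1,n}(c)$.

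\emph{Part (1).} Normality, the Gorenstein property and klt-ness are étale local, and smooth morphisms preserve them. We will therefore reduce to the versal deformation space $\on{Def}_q$ of an elliptic $m$-fold point: locally $\Gun$ is smooth over $\on{Def}_q\times\prod_i\on{Def}_{\star_i}$, and the node factors are smooth. The nodal locus is smooth, so only curves with an elliptic singularity matter. By Remark~\ref{rmk:minimal} and \cite{LekiliPolishchuk,Stevens}, $\on{Def}_q$ is the affine cone over the weighted projective model $\Min{m}\smallsetminus\{[C_0]\}$, with $\Gm$ acting with weights as in Pinkham. Its singularity is at worst the vertex of the cone, and for $m\leq 5$ the cone is an affine space. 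It is Gorenstein, being a cone over a Gorenstein stack with anticanonical polarisation up to twist. Normality holds because it is Cohen--Macaulay and regular in codimension one.

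For klt-ness we will use the standard cone criterion: a cone $C(X,L)$ over a Gorenstein $X$ with $-K_X\sim_{\mathbb Q} rL$ is klt if and only if $r>0$. Pinkham's weights let us compute the discrepancy of the vertex as a sum of weights minus the degree of the relation. We expect this to change sign exactly when $m$ passes $10$, matching the known behaviour of Smyth's spaces, where $\oM_{1,n}(m)$ fails to be $\mathbb Q$-Gorenstein-friendly for $m\geq 11$. Since $m\leq n$ and $\Gun$ contains all $m\leq n$, we obtain klt $\iff n\leq 10$. This weight computation is the step we expect to be the main obstacle; the cleanest route is to quote the known computation that the canonical class of the $m$-fold deformation cone has weight $m-11$, which also reappears in the coefficient $(n-11)\lambda$ of part (3).

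\emph{Part (2).} We will use the open substack $\oM_{1,n}\subset\Gun$ of stable curves. Its complement is the locus of curves with an elliptic singularity, and we will argue that this locus has codimension at least two in $\Gun$. It has codimension $\geq 2$ because the cusp stratum already has codimension two and the others are degenerations of it. Since $\Gun$ is normal (part (1)) and line bundles extend uniquely across codimension two on a normal stack with locally factorial charts, restriction gives an isomorphism $\Pic(\Gun)\to\Pic(\oM_{1,n})$. Factoriality holds here because the deformation cones are factorial in the relevant range; otherwise we will work with Weil divisors/$\mathbb Q$-Picard. Then we invoke the known presentation of $\Pic(\oM_{1,n})$, which is free on $\lambda$ and $\delta_{0,S}$ with $|S|\geq2$.

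\emph{Part (3).} The relations are again checked on $\oM_{1,n}$ and extended by the same restriction isomorphism. The relation $\delta_{\rm irr}=12\lambda$ is the classical discriminant relation. The relation for $\psi_i$ comes from pulling back along forgetful maps: $\psi_i=\lambda+\sum_{i\in S}\delta_{0,S}$. Summing gives $\psi$. For $K$ we use Mumford's formula $K=13\lambda-2\delta+\psi$ for the stack of stable curves, and substitute the previous relations. The final identity is then arithmetic.
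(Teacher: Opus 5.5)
Parts (1) and (3) follow the paper's route. For (1) you reduce to the miniversal deformation base of the elliptic $m$-fold point and quote Lekili--Polishchuk. For (3) you transport the classical relations from $\oM_{1,n}$; this needs only \emph{injectivity} of $\Pic(\Gun)\to\Pic(\oM_{1,n})$, which normality already gives. The genuine gap is in part (2).

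\textbf{The gap in part (2).} Everything there rests on the \emph{surjectivity} of $\Pic(\Gun)\to\Pic(\oM_{1,n})$. Normality plus codimension two give only $\Pic(\Gun)\hookrightarrow\on{Cl}(\Gun)\cong\on{Cl}(\oM_{1,n})$. Extending line bundles needs every Weil divisor on $\Gun$ to be Cartier, and your justification has three problems:
\begin{enumerate}
\item You assert that the deformation cones are factorial ``in the relevant range'' without proof.
\item Even granting this, the stacky statement needs factoriality of \'etale/smooth-local charts of $\Gun$. That requires a further argument beyond factoriality of the graded cone $\on{Def}_q$, since factoriality is not an \'etale-local property in general.
\item Your fallback of working with Weil divisors or with $\Pic_{\mathbb Q}$ proves a statement about $\on{Cl}(\Gun)$ or about rational classes. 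It does not give the integral freeness of $\Pic(\Gun)$ that is claimed.
\end{enumerate}

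\textbf{The missing idea.} No local factoriality is needed: it suffices to show that generators of $\on{Cl}(\Gun)\cong\on{Cl}(\oM_{1,n})$ are Cartier on $\Gun$.
\begin{itemize}
\item The Hodge bundle $\pi_*\omega$ is a line bundle on all of $\Gun$, because $h^0(\omega_C)=1$ on every fibre. So $\lambda$ is Cartier, and hence so is $\delta_{\rm irr}=12\lambda$.
\item Each $\Delta_{0,S}$ is Cartier on $\Gun$. Near any of its points it is the pullback of $\{t=0\}$ under the smooth map to the one-dimensional deformation space of the separating node that splits off $S$.
\end{itemize}
Hence every Weil divisor class on $\Gun$ is Cartier, and $\Pic(\Gun)=\on{Cl}(\Gun)\cong\Pic(\oM_{1,n})$. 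The factoriality you wanted then follows a posteriori rather than being an input.

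\textbf{A smaller inaccuracy in part (1).} The claim that $\on{Def}_q$ is singular at worst at the vertex is false for $m\geq 7$. Points of the punctured cone correspond to curves in $\Min{m}$ other than $[C_0]$. These may carry an elliptic $m'$-fold point with $6\leq m'<m$, and near such a point the base is smooth over the singular space $\on{Def}_{q'}$. Two of your arguments therefore need an induction on $m$:
\begin{itemize}
\item the cone criterion for klt, which requires the base of the cone to be klt;
\item your Gorenstein argument, since a cone over a Gorenstein base is Gorenstein only if it is also Cohen--Macaulay, i.e.\ the intermediate cohomology of the twists vanishes.
\end{itemize}
Since you ultimately quote the weight-$(m-11)$ computation, this is repairable. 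In effect it is what the paper does by citing Lekili--Polishchuk directly. Regularity in codimension one is most simply seen from the fact that the non-nodal locus has codimension at least $2$, which you already use in (2).
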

\begin{proof}

\noindent
    \begin{enumerate}
        \item This local question can be settled at the level of the miniversal deformation base for the elliptic $m$-fold singularities, for which see \cite[Corollary 1.5.15 and Proposition 1.6.3(iii)]{LekiliPolishchuk} (recall that rational is equivalent to klt under the Gorenstein assumption).
        \item Normality implies that the cycle class map $\on{Pic}(\Gun)\hookrightarrow\on{Cl}(\Gun)$ is injective. Since $\oM_{1,n}\subseteq\Gun$ has codimension two, we know that $\on{Cl}(\Gun)$ is generated by $\delta_{\rm irr}$ and $\delta_{0,S}$ (since it is true for $\overline{\Mcal}_{1,n}$); the latter are Cartier divisors by the deformation theory of nodes. The relation $\delta_{\rm irr}=12\lambda$ shows that the former can be represented by a Cartier divisor as well. See \cite[Fact 3.28]{CTV1} for an analogous statement in all genera (albeit only for lci curves).
        \item See \cite[Theorem 2.2]{ArbarelloCornalba} for the statement about stable curves.
    \end{enumerate}
\end{proof}

\begin{remark}\label{rmk:lb_on_Gun}
    The previous statements hold for $\oM_{1,n}(c)$ as well, with the difference that $\Delta_{0,S}$ is a divisor in $\oM_{1,n}(c)$ if and only if $c(\disc{S})<1$, and the klt condition becomes $c(P)=0$ for every partition with 11 or more parts.
\end{remark}


\begin{lemma}\label{lem:Pic-pgms}
  Let $P_1,\ldots,P_k\in\Pi_n$ be all the partitions such that $c(P_i)=1/2$. A $\mathbb{Q}$-line bundle $\Lcal=a\lambda+\sum_{c(\disc{S})<1}b_S\delta_{0,S}$ on $\oM_{1,n}(c)$ descends to $\cM(c)$ if and only if:
    \[a-\sum_{P_{i,j}\in P_i\colon \lvert P_{i,j}\rvert\geq2}b_{P_{i,j}}=0 \qquad \text{for all } i=1,\ldots,k.\]
    In particular:
\begin{enumerate}
    \item[(i)] The canonical bundle $K_{\oM_{1,n}(c)}$ descends if and only if $m_i+k_i=11$ for all $i$ such that $c(P_i)=1/2$, where $m_i$ (resp. $k_i$) is the number of parts (resp. parts of cardinality at least two) of $P_i$. In particular, all the coarse moduli spaces $\cM(c)$ (for $c$ integral) are $\mathbb{Q}$-Gorenstein.

   \item[(ii)] The line bundle $\psi_i$ descends if and only if 
   $$c(P)=1/2 \implies \{i\} \notin P.$$ In particular, the line bundle $\psi$ descends if every partition $P$ such that $c(P)=1/2$ does not contain singletons.
   
    \item[(iii)] The line bundle $K_{\oM_{1,n}(c)}+\psi+a\lambda$ descends to $\cM(c)$ if and only if the number of parts of all $P_i$ with $c(P_i)=1/2$ is equal to $\frac{11-a}{2}$, independent of $i$.
    \end{enumerate}
\end{lemma}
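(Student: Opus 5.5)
The plan is to use Alper's criterion for descent of line bundles to a good moduli space: a line bundle on a stack with good moduli space (in characteristic zero, with $\mathbb{Q}$-coefficients) descends if and only if, at every closed point, the stabiliser acts trivially on the fibre \cite[Theorem 10.3]{AlperGood}. The only points of $\oM_{1,n}(c)$ with infinite stabiliser are the atoms, that is the points of $\Atom{P_i}$ for $c(P_i)=1/2$. Finite stabilisers become harmless after passing to a multiple, so everything reduces to computing the $\Gm$-weight of $\Lcal$ at an atom of level $P_i$, and requiring it to vanish. Since $\Atom{P_i}$ is connected (a gerbe over $M_{0,P_i}$), the weight is constant along it, so one atom per $P_i$ suffices.

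To compute the weights, I would use the explicit model of an atom: an elliptic $m$-fold point $q$ with branches $B_1,\dots,B_m\simeq\PP^1$, where branch $B_j$ carries either a marking (when $P_{i,j}$ is a singleton) or a node $\star_j$ attaching a stable rational tail marked by $P_{i,j}$. The $\Gm$ acts by scaling on each branch, fixing $q$ and the special point. On the Hodge bundle $H^0(\omega_C)$, which is generated by the dualising form near $q$, I would normalise the weight to $1$; Pinkham's weights used in \Cref{prop:core-sing-specialization} confirm that the weight is nonzero and fix its sign. For $\delta_{0,S}$ with $S\notin P_i$, the divisor misses the atom (or the tail is rigid with respect to $\Gm$), giving weight $0$. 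For $S=P_{i,j}$ with $\lvert S\rvert\geq2$, the normal bundle at $\star_j$ is $T_{\star_j}B_j\otimes T_{\star_j}(\text{tail})$; $\Gm$ acts trivially on the tail factor and, by the opposite-weight observation in the proof of \Cref{prop:core-sing-specialization}, with weight opposite to that of $T_qB_j$ on the branch factor. Identifying this with the Hodge weight, I get weight $-1$. Hence $\Lcal$ has weight $a-\sum_{\lvert P_{i,j}\rvert\geq2}b_{P_{i,j}}$, which proves the main criterion.

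The consequences then follow by substitution into \Cref{lem:lb_on_Gun}, using \Cref{rmk:lb_on_Gun}. For (i), $K=(n-11)\lambda+\sum(\lvert S\rvert-2)\delta_{0,S}$ has weight $n-11-\sum_{\lvert P_{i,j}\rvert\ge2}(\lvert P_{i,j}\rvert-2)=n-11-(n-(m_i-k_i))+2k_i=m_i+k_i-11$. For (ii), $\psi_i=\lambda+\sum_{i\in S}\delta_{0,S}$ has weight $1-1=0$ if $i$ lies in a non-singleton part of $P_i$, and weight $1$ otherwise. For (iii), the weight is $2n-11+a-\sum(2\lvert P_{i,j}\rvert-2)=2m_i-11+a$. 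The integral case in (i) is vacuous, since there are no $P_i$.

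I expect the main obstacle to be the normalisation of signs: proving that the $\Gm$-weight of $\lambda$ is exactly opposite to the weight of each $\delta_{0,P_{i,j}}$, rather than some other multiple. To settle this, I would check it on a test family, namely the isotrivial degeneration $\Theta\to\oM_{1,n}(c)$ of type (i) in \Cref{prop:core-sing-specialization}. The weight equals the degree of the pullback to $[\Aaff^1/\Gm]$. I would compute this degree via the relation $\delta_{\rm irr}=12\lambda$ together with the local smoothing parameters of the nodes, cross-checking with $n=m=1$ (cusp with trivial weight bookkeeping).
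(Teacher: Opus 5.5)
Your proposal is correct and follows essentially the same route as the paper. You use Alper's descent criterion and reduce to the $\Gm$-weight at a single atom of each level $P_i$. There you identify $\lambda$ with the tangent line $T_qB_j$ and $\delta_{0,P_{i,j}}$ with the tangent line at the opposite fixed point $\star_j$ of the same branch, so the two have opposite weights and every other $\delta_{0,S}$ has weight $0$.

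Your explicit weights for (i)--(iii), namely $m_i+k_i-11$, then $0$ or $1$, then $2m_i-11+a$, are correct; they spell out what the paper leaves implicit. The cross-check via $\delta_{\rm irr}=12\lambda$ in your last paragraph is unnecessary, because the opposite-weight observation on each $\PP^1$ branch already fixes the relative normalisation.
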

\begin{proof}

        Compare with the proof of \Cref{prop:core-sing-specialization}. The condition for a line bundle to descend is that the action of the automorphism group of a geometric point be trivial on the fibre of the line bundle at that point \cite[Theorem 10.3]{AlperGood}. As we are dealing with $\mathbb{Q}$-line bundles, we may therefore focus on a single atomic curve of level $P=P_i$. The automorphism group of an atomic curve, which can be identified with $\Gm$, acts on a miniversal deformation base of the latter. The action is trivial except for the deformation space of the elliptic $m$-fold point $q_0$ and the nodes $q_1,\ldots,q_m$ separating the core from the rational tails.
        
        The Hodge bundle comes from the miniversal deformation base of $q_0$, where it can be identified with the tangent line of any branch of the core at $q_0$; its multiples come from the same base.
        if $S \notin P$ the line bundle $\delta_{0,S}$ has weight $0$, otherwise, when $S=P_{i,j}\in P_i$, it can be identified $\Gm$-equivariantly with the tangent line to the core at $q_j$.
        In this second case $\lambda$ and $\delta_{0,S}$ have opposite weights. Notice that $\lambda$ is not trivial when restricted to the residual gerbe, thus we get the equation in the statement. 
\end{proof}




\subsection{Purity}
The following definitions are very natural in the pursuit of the maximal open substacks of $\Gun$ admitting a projective good moduli space; alas, we shall see in \Cref{impure} that not all the maximal cells of the cube complex $X_{1,n}$ correspond to pure functions.
\begin{definition}
    We say a BKN function $c$ is \emph{pure of level $m$} if $c(P)=1$ for $\lvert P \rvert<m$ and $c(P)=0$ for $\lvert P \rvert > m$.
\end{definition}
Note that all of Smyth's spaces are pure, and they are minimal with respect to inclusion among all pure open substacks. On the opposite side of the Deligne--Mumford to Artin spectrum we have the maximal pure open substacks, which will be denoted as follows:
\begin{definition}
For $m\leq n$, denote by $\Gun(m)$ the open substack of $\Gun$ where only elliptic $\ell$-fold points, $\ell\leq m$, and no elliptic $k$-bridges, $k\leq m-1$, are permitted. Its good moduli space is denoted by $\cG(m)$. In this case the BKN function $c$ is pure of level $m$ and takes the value $1/2$ on the partitions of length exactly $m$.
\end{definition}
In particular, for $m=n$ we recover the stack $\Min{m}$ and its good moduli space $\mathrm{Min}_m\simeq \Spec(\mathbb{C})$. 

We shall mostly be concerned with the projectivity of the pure moduli spaces, starting from the bottom, that is $\cG(m)$.
\begin{notation}
    Following Smyth \cite[\S 4.2]{SmythII}, we set
    \[H(s)=\psi-\delta_0-s\lambda=(n-s)\lambda + \sum_{S\in[n]_2^n} (|S|-1)\delta_{0,S}.\]
    This is a line bundle over $\Gcal_{1,n}$. By \Cref{lem:lb_on_Gun} it can be identified with $(K_{\Gun}+\psi+a\lambda)/2$ for $s=\frac{11-a}{2}$ (it therefore descends to all moduli spaces of pure level $s\in[0,n]\cap\ZZ$ by \Cref{lem:Pic-pgms}), and with $K_{\Gun}+\delta_0 + (11-s)\lambda$. It also coincides with Smyth's line bundle $D(12-s)$, where $D(t)=t\lambda+\psi-\delta$.
\end{notation}

\begin{proposition}[\cite{SmythII}, Proposition 4.10]\label{prop:nefness}
    For $m-1\leq s\leq m$, the line bundle $H(s)$ is nef on $\cM(m-1)$. Moreover, it is ample for $m-1<s < m$.
\end{proposition}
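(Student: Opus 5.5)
This is Smyth's result on his spaces $\cM(m-1)$ (curves with at worst elliptic $(m-1)$-fold points, core level at least $m$). The plan is to reduce ampleness to nefness, and to prove nefness by a curve-by-curve test. Since the interval $m-1<s<m$ is open and $H(s)$ is affine in $s$, ampleness for interior $s$ follows once we know two things: nefness at the endpoints $s=m-1$ and $s=m$, and that $H(s)$ is strictly positive on every curve in the interior.

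\textbf{Step 1 (reduction).} $\cM(m-1)$ is projective, since $\oM_{1,n}(m-1)$ is a proper Deligne--Mumford stack. I would apply Kleiman's criterion, or better a Nakai--Moishezon argument on subvarieties. Because
\[H(s)=t\,H(m-1)+(1-t)\,H(m)\]
for $s=t(m-1)+(1-t)m$, and $H(m-1)-H(m)=\lambda$, it suffices to prove two statements:
\begin{enumerate}
\item[(a)] $H(m-1)$ and $H(m)$ are nef;
\item[(b)] no curve $B$ has $H(m-1)\cdot B=H(m)\cdot B=0$.
\end{enumerate}
Given (a) and (b), for interior $t$ every curve has $H(s)\cdot B>0$. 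Ampleness then needs the same statement for all subvarieties (Nakai), or Kleiman's strict positivity on the closed cone. I would get this via semiampleness of the endpoint bundles instead: show $H(m-1)$ and $H(m)$ are semiample, defining morphisms whose fibres meet only in points.

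\textbf{Step 2 (endpoint $s=m-1$).} $H(m-1)$ should be the pullback of an ample class under the identity, or the natural map to a GIT-type model. Concretely, I would identify it with the descent of a bundle to $\cG(m-1)$ or $\cG(m)$ via Lemma \ref{lem:Pic-pgms}(iii). The weight condition there is exactly that the atoms of level $s$ have $s$ parts, so $H(m-1)$ descends to $\cG(m-1)$ and $H(m)$ descends to $\cG(m)$. Both $\cM(m-1)$ and $\cM(m)$ map to these spaces, as open substacks. I would then show that $H(m)$ is nef on $\cM(m-1)$ by a direct test-curve computation. The formula
\[H(s)=(n-s)\lambda+\sum(|S|-1)\delta_{0,S}\]
reduces each intersection to degrees of $\lambda$, $\psi$ and $\delta_{0,S}$ on one-parameter families. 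Split the families into two types:
\begin{enumerate}
\item[(i)] families where the core varies with fixed tails, where $\lambda$ and $\psi_i$ on the core dominate;
\item[(ii)] families moving in the boundary $\Delta_{0,S}$, where the normal bundle restriction $\delta_{0,S}|_{\Delta_{0,S}}=-\psi_\star-\psi_\bullet$ is computed by the standard splitting.
\end{enumerate}
This is Smyth's ``contracted curves'' analysis. Its output is that $H(m-1)$, respectively $H(m)$, is zero precisely on curves contracted by the map $\cM(m-1)\to\cM(m-2)$, respectively $\cM(m-1)\to\cM(m)$ (via $\cG$). On all other curves it is positive.

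\textbf{Step 3 (separation).} The curves contracted on the two sides are of different nature. On one side, the moduli of elliptic $(m-1)$-fold points, i.e.\ crimping and attaching data, vary. On the other side, elliptic $m$-bridges, i.e.\ $m$-pointed minimal cores with rational tails, vary. These loci are distinguished by the specialization behaviour of core versus singularity level in Proposition \ref{prop:core-sing-specialization}. So no curve is contracted by both maps, which gives (b).

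\textbf{Main obstacle.} The hard step is the complete nefness check in Step 2: every irreducible curve class must be handled, including curves inside loci of singular cores. I would use Smyth's approach, bounding by an effective decomposition of $H(s)$ away from each boundary stratum and doing a direct degree computation on the remaining strata. Since the statement is cited from \cite{SmythII}, in the paper one may simply invoke it.
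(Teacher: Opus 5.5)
There is a genuine gap in Step 1, and it carries over into Step 2.

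You claim that $\cM(m-1)$ is projective because $\oM_{1,n}(m-1)$ is a proper Deligne--Mumford stack. Properness of the stack only gives a proper \emph{algebraic space} as coarse moduli space. Projectivity of $\cM(m-1)$ is exactly what the ampleness half of this statement delivers; it is the main theorem of \cite{SmythII}. So Kleiman's criterion is not available, which is the very point the paper makes before \Cref{thm:G(m) proj}. And, as you note yourself, (a) and (b) only give $H(s)\cdot B>0$ on curves, which would not imply ampleness anyway.

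Your fallback, semiampleness of $H(m-1)$ and $H(m)$ via descent to $\cG(m-1)$ and $\cG(m)$, does not repair this. \Cref{lem:Pic-pgms} only says these bundles descend, not that the descended bundles are ample. Moreover, in this paper the ampleness of $H(m)$ on $\cG(m)$ (\Cref{thm:G(m) proj}) is deduced \emph{from} the present proposition: its proof uses ampleness of $H(m-\epsilon)$ and nefness of $H(m)$ on $\cM(m-1)$. So that route is circular.

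Two smaller points. First, there is no morphism $\cM(m-1)\to\cM(m-2)$. The curves on which $H(m-1)$ has degree zero are those contracted by $\cM(m-1)\to\cG(m-1)$, e.g.\ lines in the crimping fibres of $\Ell{P}$ with $\lvert P\rvert=m-1$. There $H(s)$ has degree proportional to $s-m+1$, so $H(m-1)$ is not ample on $\cM(m-1)$ "under the identity". Second, the separation in Step 3 is true but only asserted. The real reason is that an $m$-pointed minimal core with an elliptic $(m-1)$-fold point has no moduli.

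The way out is to reverse your reduction. Nefness at the endpoints is a formal consequence of ampleness in the interior: for each curve $B$, $H(s)\cdot B$ is affine in $s$ and positive on $(m-1,m)$, hence non-negative at $s=m-1$ and $s=m$. So the only real content is ampleness of $H(s)$ for $m-1<s<m$. This must be proved directly on the proper algebraic space, by a criterion valid there, and needs no classification of $H$-trivial curves.

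The paper gives no proof of its own here; it imports the result from \cite{SmythII}. As the paper's references to it indicate, Smyth's argument runs as follows:
\begin{enumerate}
\item Induct on $n$. The restriction of $H(s)$ to each $\Delta_{0,S}\simeq\overline{\mathbb M}_{1,[n+1]\smallsetminus S}(m-1)\times\oM_{0,S\cup\{\star\}}$ splits as $\mathrm{pr}_1^*H(s)+\mathrm{pr}_2^*(\psi-\mathcal D)$.
\item This restriction is ample by induction and \cite[Lemma 4.8]{SmythII}.
\item A Nakai--Moishezon type criterion \cite[Lemma 4.9]{FS-ample} (cf.\ \cite[Lemma 4.11]{SmythII}) then propagates positivity from the boundary to all subvarieties. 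Projectivity of $\cM(m-1)$ comes out as a by-product.
\end{enumerate}
The paper's proof of \Cref{thm:G(m) proj} follows the same template, with this proposition as its input.
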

Notice that there is a diagram
\bcd
\oM_{1,n}(m-1)\ar[r,hook]\ar[d] & \Gun(m)\ar[dd] &\oM_{1,n}(m)\ar[l,hook']\ar[d] \\
\cM(m-1)\ar[dr,"\phi"]&&\cM(m)\ar[dl,"\psi"']\\
&\cG(m)&\\
\ecd
where the bottom diagonal arrows are contractions (the right one being small). It is tempting to postulate that, in the limit case $s=m$, the line bundle $H(s)$ may be ample on $\cG(m)$.
\begin{lemma}\label{lm:Pic G(m)}
    If $m=2$, the Picard rank of $\cG(m)$ is $2^{n-1}-n$. Otherwise, for $2<m<n$, it is $1$, and the rational Picard group is freely generated by $H(m)$.
\end{lemma}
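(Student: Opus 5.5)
The plan is to compute the rational Picard group of $\cG(m)$ as the group of $\mathbb{Q}$-line bundles on $\Gun(m)$ that descend. Since $\Gun(m)$ is normal, $\Pic(\cG(m))_{\mathbb{Q}}$ should be identified, via pullback along the good moduli space morphism, with the subgroup of $\Pic(\Gun(m))_{\mathbb{Q}}$ on which all stabilisers act trivially (\cite[Theorem 10.3]{AlperGood}). By \Cref{lem:lb_on_Gun} and \Cref{rmk:lb_on_Gun}, $\Pic(\Gun(m))_{\mathbb{Q}}$ is freely generated by $\lambda$ and by the $\delta_{0,S}$ with $c(\disc{S})<1$. Since $\disc{S}$ has $n-|S|+1$ parts and $c$ is pure of level $m$, this condition reads $2\le |S|\le n-m+1$. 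By \Cref{lem:Pic-pgms}, the class $a\lambda+\sum b_S\delta_{0,S}$ descends if and only if, for every $P\in\Pi_{n,m}$,
\[ a=\sum_{P_j\in P,\ |P_j|\ge 2} b_{P_j}. \]
Note that every non-singleton block of such a $P$ has size at most $n-m+1$, so only allowed variables occur. The lemma therefore reduces to computing the dimension of the solution space of this linear system.

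For $m=2$, the variables are $a$ and $b_S$ for $2\le|S|\le n-1$, giving $1+(2^n-n-2)$ unknowns. There is one equation for each unordered pair $\{A,B\}$, for a total of $2^{n-1}-1$ equations. If one of $A,B$ is a singleton, the equation reads $b_A=a$; otherwise it reads $b_A+b_B=a$. Each $b_S$ occurs in exactly one equation, namely the one indexed by $\{S,[n]\smallsetminus S\}$, so the equations are linearly independent. The rank is therefore $2^n-n-1-(2^{n-1}-1)=2^{n-1}-n$.

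For $2<m<n$, first check that $H(m)$ is a solution. It has $a=n-m$ and $b_S=|S|-1$, and for any $P\in\Pi_{n,m}$ one has $\sum_{|P_j|\ge2}(|P_j|-1)=n-m$. It remains to show the solution space is one-dimensional, which I would do by exchange arguments comparing two $m$-block partitions that differ only in a few blocks.

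\emph{Step 1 (symmetry).} Take a non-singleton block $S$ and an element $y$ lying in a singleton block. Swapping some $x\in S$ with $y$ while keeping the other $m-2$ blocks fixed gives $b_S=b_{S-x+y}$. Iterating, $b_S=f(|S|)$ depends only on $|S|$. I must still treat the case where no singleton block is available next to $S$, for instance by first swapping elements between two non-singleton blocks of equal size.

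\emph{Step 2 (affinity).} Moving one element from a block of size $t$ to a block of size $s$, with the other blocks fixed, gives $f(s)+f(t)=f(s+1)+f(t-1)$, where we set $f(1):=0$ for singleton blocks. Taking $t=2$ yields $f(s+1)-f(s)=f(2)$. Hence $f(s)=(s-1)f(2)$ throughout the range $2\le s\le n-m+1$, and then $a=(n-m)f(2)$, so every solution is a multiple of $H(m)$.

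The main obstacle is checking that each required move is realisable inside $\Pi_{n,m}$ near the boundary of the allowed range. The critical cases are $s=n-m+1$, where all other blocks are forced to be singletons, and small $n-m$. Here the hypothesis $m\ge3$ is exactly what provides a spare block. I would handle these cases by choosing the auxiliary blocks explicitly, e.g.\ $\{S', \{x\},\text{singletons}\}$ versus $\{S'\cup x,\dots\}$ with a pair merged elsewhere to keep $m$ blocks. This is also where the proof must use $m<n$, since the statement excludes the case $\mathrm{Min}_n$ being a point.
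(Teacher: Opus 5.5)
Your proposal is correct and follows essentially the same route as the paper: descent is reduced via \Cref{lem:Pic-pgms} to the linear system indexed by $\Pi_{n,m}$, and the case $m=2$ is settled because each $b_S$ occurs in a single equation. For $m\geq 3$, the exchange $(A|\{b\}|P')$ versus $(B|\{a\}|P')$ shows that $b_S$ depends only on $|S|$, and comparing $(s|2|\ldots)$ with $(s+1|1|\ldots)$ gives $b_{s+1}=b_s+b_2$; the paper reaches the same recursion by combining the types $(k|n-m+2-k|1^{m-2})$ and $(l|n-m+1-l|2|1^{m-3})$. The boundary cases you flag are not real obstacles: since $n-|S|-1\geq m-2\geq 1$ whenever $|S|\leq n-m+1$, you may always choose the partition to contain $S$, a singleton, and an arbitrary partition $P'$ of the remaining elements into $m-2$ blocks. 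Likewise, the recursion is only needed for $s\leq n-m$, where the analogous count $n-s-2\geq m-2$ holds.
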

\begin{proof}
    The Picard group of $\Gun(m)$ is freely generated by $\delta_{0,S}$ for $2\leq\lvert S\rvert\leq n-m+1$ and $\lambda$, so the Picard rank is $\sum_{i=2}^{n-m+1}{n\choose i}+1$. The equations that a line bundle must satisfy in order to descend are indexed by the set partitions of $[n]$ into $m$ parts. We obtain a homogeneous linear system whose associated matrix $\Xi$ is obtained by adjoining a $-1$ column to the incidence matrix with $\Pi_{n,m}$ rows and $[n]_2^{n-m+1}$ columns: for a partition $P$ and a subset $S$ as above \[\Xi_{P,S}=\begin{cases}
        1&\text{ if $S$ is a block of $P$,} \\
        0&\text{ otherwise,}
    \end{cases}\]
    and $\Xi_{P,\lambda}=-1$ for all $P$.

    \textbf{Subsets of cardinality $n-m+1$}: they may only appear in the corresponding partition $\disc{S}$, hence they contribute a block identity matrix (which we can ignore for the rest of the argument) and all the corresponding coefficients $b_S$ must be equal to $a$, the coefficient of $\lambda$.
    
    \textbf{Case $m=2$:} all remaining equations are of the form $a=b_S+b_{S^c}$. There are $2^n-2n-1$ remaining divisors, and $2^{n-1}-n-1$ remaining equations (they are  independent since they involve disjoint pairs of variables), so the Picard rank is $2^{n-1}-n$ as claimed, with a basis given by:
    \[\lambda+\sum_{S\in{[n]\choose n-1}}\delta_{0,S} + \frac{1}{2} \sum_{S \in [n]_2^{n-2}} \delta_{0,S},\qquad \left\{\delta_{0,S}-\delta_{0,S^c}\right\}_{S\in[n]_2^{n-2}}.\]

    \textbf{Case $m\geq3$.} Let $$\Lcal=a\lambda+\sum_{S\in[n]_2^{n-m+1}}b_S\delta_{0,S}$$ be a line bundle that descends.
    
   First, we claim that $b_S$ only depends on $\lvert S\rvert$. Fix $2\leq k\leq n-m+1$. Let $A$ and $B$ be two subsets of $[n]$ of cardinality $k$; assume that $C=A\cap B$ has cardinality $k-1$, so $A=C\cup\{a\}$ and $B=C\cup\{b\}$. Pick any partition $P'$ of $[n]\smallsetminus(C\cup\{a,b\})$ into $m-2$ parts (notice that this set has cardinality $n-k-1\geq m-2$). Comparing the equations for $P_1=(A|b|P')$ and $P_2=(B|a|P')$ we see that $b_A=b_B$. Since any two subsets of cardinality $k$ may be connected  through a sequence of elementary exchanges, we conclude that the coefficient $b_S$ only depends on the cardinality of $S$. Let us denote these coefficients by $b_2,\ldots,b_{n-m+1}$.
    They can be related as follows. 
    
    For any $2\leq k\leq n-m$, the partition type $(k|n-m+2-k|1^{m-2})$ gives:
    \[a=b_k+b_{n-m+2-k}.\]
    On the other hand, for $2\leq l\leq n-m-1$, the partition type $(l|n-m+1-l|2|1^{m-3})$ gives:
    \[a=b_2+b_l+b_{n-m+1-l}.\]
    Putting these together we obtain:
    \[b_{l+1}=a-b_{n-m+1-l}=b_2+b_l.\]
    We see that every coefficient is determined by the value of $b_2$. Setting this to be $1$ we obtain:
    \[b_{l+1}=l\qquad\text{and}\qquad a=n-m,\]
    so $\on{Pic}_{\mathbb Q}(\cG(m))$ is freely generated by the line bundle $H(m)$ for any $m\geq3$.
\end{proof}

\subsection{An impure, non-projective  example}\label{impure}
In this section we construct a BKN function that is not pure, and such that no non-trivial line bundle descends to the good moduli space (which, therefore, cannot be projective). Consider the following six partitions $P_1,\ldots,P_6$ of $[8]$:
\[\disc{(12|34)}, \disc{(12|56)}, \disc{(12|37)}, \disc{(12|48)}, \disc{(34|56)}, \disc{(37|48|56)}.\]
Let $c$ be the BKN function defined as follows:
\begin{itemize}
    \item $c(P_i)=1/2$ for $i=1,\ldots,6$;
    \item $c(P)=0$ for any partition $P$ finer than one of the $P_i$, and
    \item $c(P)=1$ for all other partitions.
\end{itemize}
We claim that $\on{Pic}\left(\mathbb M_{1,8}(c)\right)=0$.

Indeed, $\on{Pic}\left(\oM_{1,8}(c)\right)$ is free on six elements:
\[\lambda,\quad \delta_{0,12},\quad \delta_{0,34},\quad \delta_{0,37},\quad \delta_{0,48},\quad \delta_{0,56}.\]

By \Cref{lem:Pic-pgms}, a line bundle $\Lcal=a\lambda+\sum_{I}b_I\delta_{0,I}$ descends if and only if the coefficients satisfy the homogeneous linear system associated to the following matrix:
\[\begin{pmatrix}
    -1 & 1 & 1 & & & \\
    -1 &1 &&&& 1\\
    -1 &1& &1&&\\
    -1 &1&&&1&\\
    -1&&1&&&1\\
    -1&&&1&1&1
\end{pmatrix}.\]
This matrix is invertible, so no non-trivial line bundle descends to the good moduli space.

\subsection{Projectivity of $\cG(m)$}
The open embedding $\oM_{1,n}(m-1)\subset \Gcal_{1,n}(m)$ determines a morphism
\[ \varphi\colon \cM(m-1) \longrightarrow \mathbb{G}_{1,n}(m). \]
In this section we will prove the following: 
\begin{theorem}\label{thm:G(m) proj}
    The good moduli space $\cG(m)$ of $\mathcal{G}_{1,n}(m)$ is projective and $H(m)$ is ample on it.
\end{theorem}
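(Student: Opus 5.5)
The plan is to realise $\cG(m)$ as the target of the contraction $\varphi\colon\cM(m-1)\to\cG(m)$ and to push down ampleness from the nef bundle $H(m)$ on $\cM(m-1)$. First, by \Cref{lem:Pic-pgms} (and the Notation following \Cref{lm:Pic G(m)}), $H(m)$ descends to a $\mathbb Q$-line bundle on $\cG(m)$: indeed $H(m)=(K+\psi+a\lambda)/2$ with $m=\frac{11-a}{2}$, and every partition on which the BKN function of $\Gun(m)$ takes value $1/2$ has exactly $m$ parts. Its pullback along $\varphi$ is $H(m)$ on $\cM(m-1)$, which is nef by \Cref{prop:nefness} (the boundary case $s=m$). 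Since $\varphi$ is proper and surjective (both spaces are proper by \Cref{theo:classification}, and $\oM_{1,n}(m-1)\subset\Gun(m)$ is dense), $H(m)$ is nef on $\cG(m)$.

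Next I would show that $\varphi$ contracts exactly the curves on which $H(m)$ has degree zero, i.e.\ that $H(m)$ is semiample on $\cM(m-1)$ and $\varphi$ is (the Stein factorisation of) the associated morphism. The route: for $m-1<s<m$ the bundle $H(s)=H(m)+(m-s)\lambda$ is ample, so $H(m)\cdot B=0$ forces $\lambda\cdot B<0$ on any contracted-candidate curve $B$; on the other hand $H(m)\cdot B=0$ should characterise curves in the loci $\overline{\mathrm{Core}}_P$, $|P|=m$, moving only the genus one $m$-pointed part (a curve in $\mathrm{Min}_m$-fibres), which are precisely the fibres of $\varphi$ over the images of the atoms $\Atom{P}$. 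Concretely, over a point of $\cG(m)$ corresponding to an atom of level $P$ with $|P|=m$, the fibre of $\varphi$ is the (coarse space of the) fibre of $\Tail{P}\to M_{0,P}$, i.e.\ a copy of the coarse space of $\Min{m}$ minus the $\Gm$-point, namely the weighted projective space of \Cref{rmk:minimal}; elsewhere $\varphi$ is an isomorphism since the stacks agree away from $\bigcup_{|P|=m}\cTail{P}\cup\cEll{P}$. I would check directly that $\lambda$ is anti-ample (equivalently $\delta$-type classes ample) on these fibres and $H(m)$ is trivial on them, since the $\delta_{0,S}$ with $S$ a block of $P$ restrict trivially and $\lambda$ restricts to $\OO(-1)$-ish.

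Then ampleness on $\cG(m)$ follows from Nakai--Moishezon/Kleiman on the algebraic space $\cG(m)$: for any integral subvariety $V\subset\cG(m)$, pick $V'\subset\cM(m-1)$ mapping generically finitely onto $V$ (possible by surjectivity), and compute $H(m)^{\dim V}\cdot V$ as a positive multiple of $H(m)^{\dim V'}\cdot V'$; this is positive because $V'$ may be chosen not contained in a positive-dimensional fibre direction, using $H(m)=\lim H(s)$ and the fibre description above. A cleaner alternative I would try first: show $H(m)$ is semiample on $\cM(m-1)$ (e.g.\ since $H(m)-\epsilon\lambda$ ample and $H(m)$ nef, apply a basepoint-free argument, or use that $H(m)$ also pulls back to a nef and big bundle on $\cM(m)$ via the small contraction $\psi$), so that the section ring defines a projective model $Y$ with $\cM(m-1)\to Y$ contracting exactly the $H(m)$-trivial curves; then the universal property of good moduli spaces together with the fibre identification gives a finite bijective, hence (by normality of $\cG(m)$, from \Cref{lem:lb_on_Gun}) isomorphic, map $\cG(m)\to Y$. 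Nakai--Moishezon for proper algebraic spaces then gives projectivity.

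The main obstacle is the middle step: precisely identifying the $H(m)$-trivial locus in $\cM(m-1)$ with the positive-dimensional fibres of $\varphi$, i.e.\ proving that $H(m)$ is strictly positive on every curve not contracted by $\varphi$. I would attack it by intersecting test curves with the decomposition $H(m)=H(s)-(m-s)\lambda$ and using Smyth's description of the $H(m)$-trivial curves in the proof of \Cref{prop:nefness}, which I expect lie in the $m$-bridge loci.
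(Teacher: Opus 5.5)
Your preliminary steps are fine: $H(m)$ descends to $\cG(m)$, and $\varphi^*H(m)$ is nef by \Cref{prop:nefness}. But the content of the theorem is the step you leave open, namely that $\varphi^*H(m)$ is \emph{big} on every subvariety $V'\subset\cM(m-1)$ not contained in the exceptional locus, and your sketch does not establish it. Writing $H(m)=\lim_{s\to m}H(s)$ only gives $(H(m)^d\cdot V')=\lim_{s\to m}(H(s)^d\cdot V')\geq 0$. Choosing $V'$ generically finite over $V$ is circular, since positivity of $(H(m)^d\cdot V)$ is exactly what you want to prove.

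Identifying the $H(m)$-trivial curves with the $\varphi$-contracted ones would not suffice either. Kleiman's criterion is not available on the algebraic space $\cG(m)$, and strict positivity on curves does not control top intersections on higher-dimensional subvarieties. The decomposition $H(m)=H(s)-(m-s)\lambda$ cannot supply the missing inequality, because $\lambda$ is not nef on $\cM(m-1)$ in general. On a curve $B$ contracted by $\cM(m-1)\to\cG(m-1)$ one has $H(m-1)\cdot B=0$, so ampleness of $H(s)=H(m-1)-(s-m+1)\lambda$ gives $\lambda\cdot B<0$.

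Your signs are also reversed in places. $H(m)\cdot B=0$ forces $\lambda\cdot B>0$, $\lambda$ is ample on the fibres $\overline{\mathbb M}_{1,m}(m-1)$ of $\varphi$, and the ample perturbation is $H(m)+\epsilon\lambda=H(m-\epsilon)$, not $H(m)-\epsilon\lambda$. Finally, the semiampleness route needs a basepoint-free theorem and finite generation (BCHM). These are only available with klt singularities, i.e.\ for $m<11$ by \Cref{lem:lb_on_Gun}; ``nef plus a small ample perturbation'' is not a semiampleness criterion.

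The idea you are missing is to normalise the $\lambda$-coefficient. Set $\widetilde H(t)=\frac{1}{n-t}H(t)=\lambda+\frac{1}{n-t}\sum_S(|S|-1)\delta_{0,S}$; then
\[\widetilde H(m)-\widetilde H(m-\epsilon)=\frac{\epsilon}{(n-m)(n-m+\epsilon)}\sum_S(|S|-1)\delta_{0,S},\]
an effective combination of boundary divisors with no $\lambda$ term. Let $Z\subset\cG(m)$ have dimension $d$, not be contained in any $\Delta_{0,S}$ or $\mathrm{Core}_P$, and have strict transform $\widetilde Z\subset\cM(m-1)$. This splits $(\widetilde H(m)^d\cdot\widetilde Z)$ into two pieces.
\begin{enumerate}
\item The term $(\widetilde H(m)^{d-1}\cdot\widetilde H(m-\epsilon)\cdot\widetilde Z)$ is positive: for $d=1$ by ampleness of $\widetilde H(m-\epsilon)$, and for $d>1$ by induction on $d$, after cutting $\widetilde Z$ by a general member of a multiple of $\widetilde H(m-\epsilon)$ to get a $(d-1)$-dimensional $\widetilde W$ not contained in the exceptional locus.
\item The terms $c_S\,(\widetilde H(m)^{d-1}\cdot[\Delta_{0,S}\cap\widetilde Z])$ are $\geq 0$, by nefness and because $\widetilde Z\not\subset\Delta_{0,S}$.
\end{enumerate}
Subvarieties inside $\Delta_{0,S}\simeq\mathbb G_{1,[n+1]\smallsetminus S}(m)\times\oM_{0,S\cup\{\star\}}$ or inside $\mathrm{Core}_P\simeq\oM_{0,P}$ need a separate induction on $n$, with $m=n$ (a point) as base case. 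This uses $H(m)|_{\Delta_{0,S}}=\mathrm{pr}_1^*H(m)+\mathrm{pr}_2^*(\psi-\mathcal D)$ and the ampleness of $\psi-\mathcal D$ on $\oM_{0,k}$ (\Cref{lm:case Z contained in divisor}). Neither ingredient appears in your proposal, and without them the argument does not close.
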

We will prove this by applying the Nakai-Moishezon criterion for the ampleness of line bundles on algebraic spaces.
\begin{remark}
    We cannot resort to Kleiman's criterion directly, as the latter requires a projective scheme to start with and may fail for algebraic spaces. A direct application of \cite[Lemma 4.9]{FS-ample} as in \cite[Lemma 4.11]{SmythII} is complicated by our scarce geometric understanding of the only line bundle class $H(m)$ on $\cG(m)$, which - as opposed to Smyth's spaces - is far from being $\mathbb Q$-factorial. Nonetheless, our proof was inspired by Smyth's.

    An alternative (minimal model-theoretic) proof, along the lines of \cite[Proposition 7.3]{AFS-projectivity}, appears to work for $m<11$ only, namely when the singularities are klt (see \Cref{lem:lb_on_Gun}): consider 
    the small contraction $\psi\colon\cM(m)\to\cG(m)$. 
    The section ring of $H(m)$ is the same whether it is computed on $\cG(m)$ or on $\cM(m)$ (as can be seen by passing to the stacks). Next, by identifying $H(m)$ with $D(12-m)$ (in Smyth's notation) and applying \cite[Proposition 4.7]{SmythII}, we deduce that the section ring of $H(m)$ can be computed on the moduli space of stable curves $\oM_{1,n}$. Writing
    \[H(m)=\frac{n-m}{12}\delta_{irr}+\sum_{k=2}^n(k-1)\sum_{\lvert S\rvert=k}\delta_{0,S}\]
    we realise that $H(m)$ is $\mathfrak S_n$-invariant, and that it lies in the interior of the invariant effective cone (as described in \cite[Theorem 5.1]{ChenCoskun}), so in particular it is big. Writing
    \[H(m)=K_{\oM_{1,n}}+\frac{11-m}{12}\delta_{irr}+\delta_0\]
    for $m<11$, we may apply \cite[Corollary 1.2.1]{BCHM} to deduce the finite generation of the section ring. Now, working on $\cM(m)$ again, the line bundle $H(m)$ is big, nef, and has finitely generated section ring, so it is semiample (see \cite[Theorem 2.3.15]{LazarsfeldI}). Since it is numerically trivial exactly on the curves that are contracted under $\psi$, the line bundle $H(m)$ is semiample and positive on all curve classes on $\cG(m)$, and therefore ample.
\end{remark}
As we now need to distinguish between stacks (and spaces) of curves with core level $m$ in $\Gun(m)$ and $\oM_{1,m}(m-1)$, we set $\Tail{m}^\circ=\Tail{m}\cap \oM_{1,n}(m-1)=\Tail{m}\setminus\Atom{m}$. Let $\mathrm{Core}_m$ and $\mathrm{Core}_m^\circ$ denote their respective good moduli spaces. Observe that if $n=m$ then $\Tail{m}=\mathcal{G}_{1,m}(m)$ and $\Tail{m}^\circ=\oM_{1,m}(m-1)$.
\begin{lemma}\label{lm:Gmm}
    For $m=n$ we have $\cG(n)=\Spec(\mathbb{C})$, thus $\varphi$ is the contraction to a point.
\end{lemma}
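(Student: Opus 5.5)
For $m=n$, the stack $\Gcal_{1,n}(n)$ is described by a BKN function that is pure of level $n$. Such a function equals $1$ on every partition with fewer than $n$ parts and $1/2$ on the discrete partition $\disc{[n]}$, the only partition with $n$ parts. So $c$-stable curves are those with core level $\disc{[n]}$, i.e. no rational tails carrying two or more markings. Equivalently, $\Gcal_{1,n}(n)=\Min{n}$, as already noted after the definition of $\Gun(m)$. The plan is to show that $\Min{n}$ has a unique closed point, namely the atom $[C_0]$: the curve with an elliptic $n$-fold point and one marking on each branch. Its good moduli space is then a single point.

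\textbf{Step 1: $[C_0]$ is the unique closed point.} First, $\Atom{\disc{[n]}}$ is closed by \Cref{rmk:close_atoms}. It is a $\Gm$-gerbe over $\prod\oM_{0,\star_i\cup P_i}$ with no non-singleton factors, i.e. over a point, so it is the single point $[C_0]$. Next, every point of $\Min{n}$ specializes to $[C_0]$. I would use \Cref{rmk:minimal}: $\Min{n}\smallsetminus\{[C_0]\}$ is the projectivisation of the miniversal deformation base of the elliptic $n$-fold point. Since $\Gm$ acts on this base with weights of one sign (Pinkham, as in the proof of \Cref{prop:core-sing-specialization}), we have $\Min{n}\simeq[\on{Def}_{q}/\Gm]$ near $[C_0]$, and every orbit closure in $\on{Def}_q$ contains the origin. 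Hence every point of $\Min{n}$ lies in $\overline{\{[C_0]\}}$'s opposite direction: it specializes isotrivially to $[C_0]$.

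\textbf{Step 2: the good moduli space is a point.} By \Cref{prop:theta complete} and \Cref{gms_criterion}, $\Gcal_{1,n}(n)$ has a proper good moduli space $X$. Points of $X$ correspond to closed points of the stack, and by Step 1 there is exactly one, so $X$ is a single point set-theoretically. For reducedness, I would note that $X=\Spec\big(\Gamma(\OO)^{\Gm}\big)$ for $[\on{Def}_q/\Gm]$ with all weights nonzero of the same sign, and these invariants are just the constants. So $X=\Spec(\mathbb{C})$.

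\textbf{Step 3: $\varphi$.} The map $\varphi\colon\cM(n-1)\to\cG(n)$ is a morphism to a point, hence it is the contraction to a point.

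I expect the main obstacle to be Step 1. The identification of $\Min{n}$ with the global quotient $[\on{Def}_q\smallsetminus0/\Gm]\cup\{[C_0]\}$ requires the weights to be strictly of one sign, including the cases $n=1,2$, which have to be checked by hand.
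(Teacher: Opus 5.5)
Your proposal is correct and is essentially the paper's argument: the paper simply invokes the global presentation $\Gcal_{1,n}(n)\simeq[\Spec(A)/\Gm]$ with all generators of positive weight (Lekili--Polishchuk, Theorem 1.5.7) and reads off $\Spec(A^{\Gm})=\Spec(\mathbb{C})$, which is exactly your reducedness computation in Step 2. Once you take that presentation as a global input, and not merely as an identification near $[C_0]$, your Step 1 and your appeal to the existence theorem for proper good moduli spaces are no longer needed.
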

\begin{proof}
    We have an isomorphism $\Gcal_{1,n}(n)\simeq [\Spec(A)/\Gm]$, where the induced action of $\Gm$ on each generator of $A$ has positive weight  \cite[Theorem 1.5.7]{LekiliPolishchuk}, thus $\mathbb{G}_{1,n}(n)=\Spec(A^{\Gm})=\Spec(\mathbb{C})$.
\end{proof}
\begin{lemma}\label{lm:exc in Tm}
    For $n>m$, the morphism $\varphi$ is birational and its exceptional locus is equal to $\mathrm{Core}_m^\circ$.
\end{lemma}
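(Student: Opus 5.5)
The plan is to compare the two stacks directly. The open embedding $\oM_{1,n}(m-1)\subset\Gun(m)$ has complement exactly $\Atom{m}$, the union of the closed substacks $\Atom{P}$ with $\lvert P\rvert=m$. Indeed, the BKN functions differ only on partitions with $m$ parts, where the value is $1$ for $\oM_{1,n}(m-1)$ and $1/2$ for $\Gun(m)$, so the only extra points are the $P$-bridges with $\lvert P\rvert=m$. By \Cref{lem:union-of-strata} the non-atomic ones, $\Tail{P}\smallsetminus\Atom{P}$, are already present, so the new points are just the atoms. The universal property of good moduli spaces gives $\varphi$.

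I would first establish that $\varphi$ is an isomorphism away from $\mathrm{Core}_m^\circ$. Consider the open substack $\Ucal=\Gun(m)\smallsetminus\cTail{m}$, where $\cTail{m}$ is the union of the $\cTail{P}$ with $\lvert P\rvert=m$. This substack is open and saturated for the good moduli space map, since it is the complement of a closed union of strata containing all the atoms. Hence its image $U$ is open in $\cG(m)$ and is the good moduli space of $\Ucal$ \cite{AlperGood}. On the other side, $\Ucal\subset\oM_{1,n}(m-1)$ is the complement of $\cTail{m}^\circ$, whose image in $\cM(m-1)$ is $\mathrm{Core}_m^\circ$ together with its closure points of core level coarser than $m$ parts. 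Those points are already excluded because they lie in $\cTail{m}$. So $\varphi^{-1}(U)$ is the coarse space of $\Ucal$, and $\varphi$ restricts to an isomorphism over $U$. Since $\Ucal$ contains $\Mcal_{1,n}$ and is dense, $\varphi$ is birational.

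Second, I would show that every point of $\mathrm{Core}_m^\circ$ is exceptional, meaning that $\varphi$ has positive-dimensional fibres through it. Take a point $C$ with $\core(C)=P$ and $\lvert P\rvert=m$. In $\Gun(m)$ it isotrivially degenerates, via case (i) of \Cref{prop:core-sing-specialization}, to the atom $C_{00}$ with the same rational tails. So $\varphi(C)$ equals the image of $C_{00}$, which is a point of $M_{0,P}$ by \Cref{rmk:M0P is gms}. The fibre of $\varphi$ over that point therefore contains the full fibre of $\Tail{P}^\circ\to M_{0,P}$. This fibre is $\Min{m}\smallsetminus\{[C_0]\}$ modulo automorphisms, i.e. the coarse space of $\oM_{1,m}(m-1)$, which is positive dimensional for $n>m$ since then $m\geq1$ and $\oM_{1,m}(m-1)$ has dimension $m$. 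Conversely, the fibres over $U$ are points, so the exceptional locus is exactly $\mathrm{Core}_m^\circ$ (or its closure, depending on the convention; the closure points lie in lower strata, where the same argument applies).

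The main obstacle is the saturation and compatibility claim in the first step. One must check that every point of $\cG(m)\smallsetminus U$ comes only from $\cTail{m}$, i.e. that no point of $\Ucal$ has closure meeting an atom of level $m$. This follows from \Cref{prop:core-sing-specialization}: a closed point in the closure of a curve with core level having fewer than $m$ parts, or with no core-level-$m$ structure, cannot be an atom of level $m$, because core levels only coarsen under specialisation. I would verify this carefully, together with the fact that closed points of $\Gun(m)$ over $U$ have finite stabilisers, so that $U$ is a coarse space on both sides.
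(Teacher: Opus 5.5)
There is a genuine gap in your first step, caused by a misidentification at the outset. For Smyth's $\oM_{1,n}(m-1)$ the BKN function takes the value $0$, not $1$, on partitions with $m$ parts: $m$-level bridges are allowed, elliptic $m$-fold points are not. Hence $\Gun(m)\smallsetminus\oM_{1,n}(m-1)$ is the whole locus $\Ell{m}=\bigcup_{\lvert P\rvert=m}\Ell{P}$ of curves with an elliptic $m$-fold point, not just $\Atom{m}$.

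Since $n>m$, every $P$ with $m$ parts has a block of size at least two. So $\Ell{P}\smallsetminus\Atom{P}$ is nonempty over every point of $M_{0,P}$: it consists of elliptic $P$-fold points in which some branch carries at least two special points, and their core level strictly refines $P$. Your $\Ucal=\Gun(m)\smallsetminus\cTail{m}$ therefore contains these curves. This has two consequences:
\begin{itemize}
\item $\Ucal$ is not contained in $\oM_{1,n}(m-1)$.
\item $\Ucal$ is not saturated for the good moduli space map $\pi\colon\Gun(m)\to\cG(m)$. By type (ii) of \Cref{prop:core-sing-specialization}, each of these curves isotrivially degenerates, through a change of crimping datum, to the $P$-atom with the same rational tails.
\end{itemize}
In fact $\pi(\Ucal)=\cG(m)$, so ``$\varphi$ is an isomorphism over $\pi(\Ucal)$'' would contradict your own second step.

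The check you sketch at the end does not catch this, because it runs the wrong way. Core levels coarsen under specialisation, and that is exactly what lets an elliptic $P$-fold point with core level finer than $P$ specialise to an atom of core level $P$. Equisingular degenerations of type (ii) are not considered at all.

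The fix is the paper's. The dichotomy (i)/(ii) in \Cref{prop:core-sing-specialization} gives $\pi^{-1}(\pi(\Tail{m}))=\Tail{m}\cup\Ell{m}$, so you should take $\Ucal=\Gun(m)\smallsetminus(\Tail{m}\cup\Ell{m})$. This $\Ucal$ is saturated, lies in $\oM_{1,n}(m-1)$ with complement exactly $\Tail{m}^\circ$ there, and contains $\Mcal_{1,n}$; the last point is where $n>m$ is really used.

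With this $\Ucal$ the rest of your argument goes through:
\begin{itemize}
\item $\varphi$ is an isomorphism over $U=\pi(\Ucal)$, since both $U$ and $\varphi^{-1}(U)$ are good moduli spaces of $\Ucal$. The paper reaches the same conclusion by a separatedness and density argument instead.
\item Your second step, exhibiting a copy of $\overline{\mathbb M}_{1,m}(m-1)$ in the fibre over each point of $M_{0,P}$, is essentially the paper's argument. The paper identifies $\mathrm{Core}_P^\circ\to\mathrm{Core}_P$ with the projection $\overline{\mathbb M}_{1,m}(m-1)\times\oM_{0,P}\to\oM_{0,P}$.
\end{itemize}
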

\begin{proof}
     By \Cref{prop:core-sing-specialization} we have $\pi^{-1}(\pi(\Tail{m}))=\Tail{m} \cup \Ell{m}$, where $\pi\colon \Gcal_{1,n}(m)\to \cG(m)$ is the good moduli space morphism. Therefore $\Ucal=\Gcal_{1,n}(m)\smallsetminus(\Tail{m} \cup \Ell{m})$ is a saturated open substack, and 
     the induced morphism $U=\pi(\Ucal)\to \cG(m)$ is an open embedding. Furthermore, the natural inclusion $\Ucal\subset\oM_{1,n}(m-1)$ also induces an open embedding $U\to \cM(m-1)$.

    We claim that $\varphi$ is an isomorphism over $U$: indeed, we have a commutative diagram
    \[
    \begin{tikzcd}
        U \ar[r] \ar[dr, "="] & \cM(m-1) \times_{\cG(m)} U \ar[r] \ar[d] & \cM(m-1) \ar[d, "\varphi"] \\
        & U \ar[r] & \cG(m)
    \end{tikzcd}
    \]
    where the bottom arrow is an open embedding, hence the second top arrow is an open embedding. As the composition of the two top arrows is also an open embedding, we deduce that the first top arrow is an open embedding. By separatedness of the vertical arrows, we deduce that the first top arrow is also closed: as $U$ is dense in $\cG(m)$, this implies that the first top arrow is an isomorphism, thus proving that $\mathrm{Exc}(\varphi)\subset \mathrm{Core}_m^{\circ}$.

    Let $P=\{P_1,\ldots, P_k, P_{k+1},\ldots, P_m\}$ be a partition with $|P_j|>1$ if and only if $j\leq k$. Recall that we set $\oM_{0,P}=\prod_{i=1}^{k}\oM_{0,P_i\cup \star_i}$ in \Cref{rmk:M0P is gms}. Observe that $\mathrm{Core}_P\simeq\oM_{0,P}$, and $\mathrm{Core}_P^\circ\simeq \overline{\mathbb{M}}_{1,m}(m-1) \times \oM_{0,P}$. The induced morphism of good moduli spaces $\mathrm{Core}_P^\circ \to \mathrm{Core}_P$ corresponds then to the projection $$\varphi_P\colon\overline{\mathbb{M}}_{1,m}(m-1) \times \oM_{0,P} \longrightarrow \oM_{0,P}.$$ As $\mathrm{Core}_m^{\circ}$ is the union of such $\mathrm{Core}_P^{\circ}$, this shows that $\mathrm{Core}_m^{\circ}\subset \mathrm{Exc}(\varphi)$.
\end{proof}
Before proving \Cref{thm:G(m) proj}, we restrict our attention to the boundary divisors $\Delta_{0,S} \subset \cG(m)\ (2\leq |S| <n-m+1)$ and the image of the exceptional divisor $\mathrm{Core}_m$.
\begin{lemma}\label{lm:case Z contained in divisor}
    Assume that $H(m)$ is ample on $\mathbb{G}_{1,r}(m)$ for $r<n$. Then the restriction of $H(m)$ to $\Delta_{0,S}\subset\cG(m)$ is ample. 
    The same holds for the restriction to $\mathrm{Core}_P,\ |P|=m$.
\end{lemma}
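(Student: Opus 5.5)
The plan is to identify each boundary locus of $\cG(m)$ with a product of a lower-dimensional good moduli space $\mathbb{G}_{1,r}(m)$ and moduli spaces of stable rational curves, restrict $H(m)$, and check ampleness factor by factor.

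\textbf{The divisor $\Delta_{0,S}$.} Here $2\leq |S|< n-m+1$. Gluing gives a finite surjective map of stacks
\[\xi_S\colon \Gcal_{1,n-|S|+1}(m)\times\oM_{0,S\cup\star}\longrightarrow \overline{\Delta}_{0,S}\subset\Gun(m).\]
In the first factor, the marking $\star$ replaces the block $S$. This is well defined: a curve of $\Gun(m)$ with a rational tail marked by $S$ has core level and singularity level obtained from those of the first factor by substituting $\star\mapsto S$, and this substitution preserves the number of parts, so the purity conditions at level $m$ match. Since $|S|<n-m+1$, we have $r:=n-|S|+1>m$, so the hypothesis applies. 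By the universal property of good moduli spaces, $\xi_S$ descends to a finite surjective map
\[\bar\xi_S\colon\mathbb{G}_{1,r}(m)\times\overline{M}_{0,S\cup\star}\to\Delta_{0,S}.\]
Since ampleness pulls back and descends along finite surjective maps, it suffices to compute $\xi_S^*H(m)$. The standard pullback formulas for $\lambda$, $\psi_i$ and $\delta_{0,T}$ along the gluing map (as in Arbarello--Cornalba) give
\[\xi_S^*H(m)=\mathrm{pr}_1^*H_r(m)+\mathrm{pr}_2^*\Big(\sum_{T}(|T|-1)\delta_{0,T}+(|S|-1)\,\xi_S^*\delta_{0,S}\Big),\]
where the $T$ range over the boundary divisors of $\oM_{0,S\cup\star}$ not containing $\star$. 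On the first factor we use $\xi_S^*\delta_{0,S}=-\psi_\star$ on the genus-one side. On the second factor the normal bundle contributes $-\psi_\star$ on the genus-zero side, which is ample, together with the combination of boundary classes.

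We then check that the genus-one factor reassembles to exactly $H_r(m)$, ample by hypothesis. In $H_r(m)$ the marking $\star$ counts with the weight of $\psi_\star$, and the coefficient $|S|-1$ together with the $\delta$-terms involving $\star$ should match $H(m)$ evaluated on subsets containing the block $S$. The genus-zero factor is $\kappa$-like: it equals $\sum_T(|T|-1)\delta_{0,T}+(|S|-1)\psi_\star$ minus the $\star$-divisors. Using Keel's relations one should rewrite this as a positive multiple of $\psi_\star$, or as $\kappa_1$-type classes, which are ample on $\overline{M}_{0,S\cup\star}$. Ampleness of an exterior tensor product of ample bundles then finishes this case.

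\textbf{The locus $\mathrm{Core}_P$, $|P|=m$.} This is easier. By the description in the proof of \Cref{lm:exc in Tm}, $\mathrm{Core}_P\simeq\overline M_{0,P}=\prod_i\overline M_{0,P_i\cup\star_i}$, because the minimal elliptic core is contracted to a point. Restricting $H(m)$ there, the $\lambda$ and $\psi_{\star_i}$-type contributions from the core cancel precisely because the bundle descends. What remains on each factor is $\sum_T(|T|-1)\delta_{0,T}$ plus $(|P_i|-1)$ times the attaching normal-bundle term. This is the same genus-zero class as in the divisor case, so it is ample factorwise.

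\textbf{Main obstacle.} The bookkeeping in the pullback computation is where the argument could fail. Specifically, it must be verified that the genus-zero class $\sum_T(|T|-1)\delta_{0,T}+(|S|-1)\psi_\star$ (with the correct sign conventions) is ample on $\overline M_{0,S\cup\star}$. It must also be verified that the subsets $T\ni\star$ do not spoil the identification with $H_r(m)$. For the former, I would first try writing it as $\sum_{i\in S}\psi_i$ via $\psi_i=\sum_{i\in T,\star\notin T}\delta_{0,T}$. This class is ample on $\overline M_{0,k}$, being a multiple of $\kappa_1$ plus boundary, and it is strictly positive on F-curves.
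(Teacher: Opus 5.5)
Your outline matches the paper's: identify $\Delta_{0,S}$ and $\mathrm{Core}_P$ with products, split the restriction of $H(m)$ as $\mathrm{pr}_1^*H_r(m)$ plus a class pulled back from $\oM_{0,S\cup\star}$, and apply the induction hypothesis to the first factor. Your purity check and the bound $r>m$ are fine. The gap is the step you flag as the main obstacle. You never determine the genus-zero class, and the computations you sketch for it are wrong:
\begin{itemize}
\item Your displayed formula puts $(|S|-1)\xi_S^*\delta_{0,S}$ inside $\mathrm{pr}_2^*$. But $\xi_S^*\delta_{0,S}=-\mathrm{pr}_1^*\psi_\star-\mathrm{pr}_2^*\psi_\star$ has a genus-one component.
\item You call $-\psi_\star$ ample on $\oM_{0,S\cup\star}$; it is anti-nef.
\item The relation you quote, $\psi_i=\sum_{i\in T,\star\notin T}\delta_{0,T}$, is incorrect: one must exclude two markings, not one.
\item The genus-zero class is not $\sum_{i\in S}\psi_i$. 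Already for $|S|=3$, on $\oM_{0,4}\simeq\mathbb P^1$, the correct class and $\sum_{i\in S}\psi_i$ have degrees $1$ and $3$. Nor is it a multiple of $\psi_\star$.
\item Neither candidate is ample in general. $\psi_\star$ is pulled back along Kapranov's birational morphism $\oM_{0,S\cup\star}\to\mathbb P^{|S|-2}$, so it is not ample once $|S|\ge 4$. $\sum_{i\in S}\psi_i$ has degree $0$ on the F-curve whose legs are $\{\star\}$ and three blocks of size $\ge 2$ partitioning $S$, which exists once $|S|\ge 6$.
\end{itemize}
So, as written, ampleness of the second factor is not established.

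The clean route, which is what the paper does, is to restrict $H(m)=\psi-\delta_0-m\lambda$ rather than its $\lambda,\delta$-expansion. One has $\lambda\mapsto\mathrm{pr}_1^*\lambda$ and $\psi\mapsto\mathrm{pr}_1^*\sum_{i\notin S}\psi_i+\mathrm{pr}_2^*\sum_{i\in S}\psi_i$. Writing $\delta_0^{(r)}$ and $\mathcal D$ for the total boundaries of the two factors, one also has $\delta_0\mapsto\mathrm{pr}_1^*(\delta_0^{(r)}-\psi_\star)+\mathrm{pr}_2^*(\mathcal D-\psi_\star)$. The self-intersection of $\Delta_{0,S}$ thus supplies exactly the missing $\psi_\star$ on each side, giving
\[
H(m)|_{\Delta_{0,S}}=\mathrm{pr}_1^*H_r(m)+\mathrm{pr}_2^*(\psi-\mathcal D),
\]
where $\psi$ on the second factor includes $\psi_\star$. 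The class $\psi-\mathcal D=\kappa_1$ is ample on $\oM_{0,S\cup\star}$ \cite[Lemma 4.8]{SmythII}.

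In your bookkeeping this reads as follows. On the genus-one side, $\psi_\star=\lambda+\sum_{T'\ni\star}\delta_{0,T'}$ gives $(|S|-1)\bigl(\sum_{T'\ni\star}\delta_{0,T'}-\psi_\star\bigr)=-(|S|-1)\lambda$. This turns $(n-m)\lambda$ into $(r-m)\lambda$, so the subsets containing $\star$ do not spoil $H_r(m)$. On the genus-zero side, let $\mathcal D_T$ be the boundary divisor separating $T\subsetneq S$ ($|T|\ge 2$) from the remaining markings. Then
\[
\sum_{T}(|T|-1)\mathcal D_T-(|S|-1)\psi_\star=\sum_{T}\frac{(|T|-1)(|S|-|T|)}{|S|}\mathcal D_T=\kappa_1 .
\]
Note the minus sign on $\psi_\star$, opposite to your guess. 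For $\mathrm{Core}_P$ the same class $\psi-\mathcal D$ appears on each genus-zero factor. The core contributes $\sum_i\psi_{\star_i}-m\lambda=0$, since $\psi_{\star_i}=\lambda$ on $\Gcal_{1,m}(m)$.
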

\begin{proof}
    We have $\Delta_{0,S}\simeq \mathbb{G}_{1,[n+1]\smallsetminus S}(m) \times \oM_{0,S\cup\{\star\}}$. Let $\mathrm{pr}_1^*\psi$ (respectively $\mathrm{pr}_2^*\psi$) be the pullback of $\sum_{i\in [n+1]\smallsetminus S} \psi_i$ (respectively $\sum_{i\in S\cup\{\star\}} \psi_i$). Furthermore, denote $\mathrm{pr}_1^*\delta$ (respectively $\mathrm{pr}_2^*\mathcal{D}$) the pullback of $\sum \delta_{0,T}$ (respectively $\sum\mathcal{D}_T$, the sum of the boundary divisors in $\oM_{0,S\cup\{\star\}}$). We have that $\lambda|_{\Delta_{0,S}}=\rm{pr}_1^*\lambda$ and $\psi-\delta_0|_{\Delta_{0,S}}=\rm{pr}_1^*(\psi-\delta_0)+\rm{pr}_2^*(\psi-\mathcal{D})$ \cite[page 1877, first and second equations]{SmythII}. We deduce
    \begin{align*}
        H(m)|_{\Delta_{0,S}}&=(\psi-\delta_0-m\lambda)|_{\Delta_{0,S}} \\
        &=\mathrm{pr}_1^*(\psi - \delta_0 - m\lambda) + \mathrm{pr}_2^*(\psi - \mathcal{D}) = \mathrm{pr}_1^*(H(m)) + \mathrm{pr}_2^*(\psi-\mathcal{D})
    \end{align*}
    Our assumption tells us that $\mathrm{pr}_1^*(H(m))$ is the pullback of an ample line bundle on the first factor, and $\mathrm{pr}_2^*(\psi-\mathcal{D})$ is ample on $\oM_{0,S\cup\{\star\}}$ \cite[Lemma 4.8]{SmythII}, thus their sum is ample on the product.

    For $|P|=m$, recall that $\mathrm{Core}_P=\prod_{P_i\in P} \oM_{0,P_i\cup\{\star_i\}}$. The restriction of $H(m)$ to $\mathrm{Core}_P$ is then equal to the sum of the ample divisors $\mathrm{pr}_i^*(\psi-\mathcal{D})$, thus it is ample.
\end{proof}
\begin{proof}[Proof of \Cref{thm:G(m) proj}]
    We prove that $H(m)$ is ample on $\cG(m)$ by induction on the number of marked points. The base case is $\mathbb{G}_{1,1}(1)=\Spec(\mathbb{C})$, which is obvious. More generally, the case $m=n$ is handled by \Cref{lm:Gmm}, so we may assume $m<n$.
    We want to employ the Nakai-Moishezon criterion, i.e. we want to show that for every subvariety $Z\subset \cG(m)$ of dimension $d>0$, we have $(H(m)^d\cdot [Z])>0$. 
    

    By \Cref{lm:case Z contained in divisor} the restriction of $H(m)$ to $\Delta_{0,S}$ and $\mathrm{Core}(P)$ is ample, so we may assume that $Z$ is not contained in either substack of $\cG(m)$. 

    Let $\widetilde{Z}$ be the strict transform of $Z$ in $\cM(m-1)$, so that by the projection formula we have 
    \[(H(m)^d\cdot [\widetilde{Z}])=(H(m)^d\cdot \varphi_*[\widetilde{Z}])=(H(m)^d\cdot [Z]).\]
    Moreover, the intersection of $\widetilde{Z}$ with the boundary divisor $\Delta_{0,S}\subset \cM(m-1)$ is either proper or empty.

    Write $\widetilde{H}(t)=\frac{1}{n-t}H(t)$. As rescaling $H(m)$ by a positive value does not affect the intersection number, we can equivalently prove that $(\widetilde{H}(m)^d \cdot Z)>0$. For $0<\epsilon\ll 1$ consider the difference
    \begin{align*}
        \widetilde{H}(m) - \widetilde{H}(m-\epsilon) = &\frac{\epsilon}{(n-m)(n-m+\epsilon)}\sum (|S|-1)\delta_{0,S}=\sum c_s \delta_{0,S}
    \end{align*}
    with $c_S> 0$. We can write
    \begin{align}\label{eq:Htilde}
        (\widetilde{H}(m)^d\cdot [\widetilde{Z}]) = &(\widetilde{H}(m)^{d-1}\cdot\widetilde{H}(m-\epsilon)\cdot [\widetilde{Z}])+(\widetilde{H}(m)^{d-1}\cdot (\widetilde{H}(m)-\widetilde{H}(m-\epsilon))\cdot [\widetilde{Z}])
    \end{align}
    We will prove the positivity of this expression by induction on $d$. First assume that $d=1$. The line bundle $\widetilde{H}(m-\epsilon)$ is ample on $\cM(m-1)$ by \Cref{prop:nefness}, hence  $(\widetilde{H}(m-\epsilon)\cdot [\widetilde{Z}])>0$; furthermore our assumption ensures that $(\delta_{0,S}\cdot [\widetilde{Z}]) \geq 0$, and we conclude that \eqref{eq:Htilde} is strictly positive.
    
    Suppose now $d>1$. Ampleness of $\widetilde{H}(m-\epsilon)$ implies that (up to taking a multiple of all line bundles involved) we can represent $\widetilde{H}(m-\epsilon)\cdot \widetilde{Z}$ as the class of a closed subscheme $\widetilde{W} \subset \cM(m-1)$ of dimension $d-1$ that is not fully contained in the exceptional locus $\mathrm{Core}_m$, so that $\varphi_*[\widetilde{W}]=[W]$ with $W\subset \cG(m)$ of dimension $d-1$. Thus:
    \begin{align*}
        (\widetilde{H}(m)^{d-1}\cdot\widetilde{H}(m-\epsilon)\cdot [\widetilde{Z}]) = &(\widetilde{H}(m)^{d-1}\cdot [\widetilde{W}])=(\widetilde{H}(m)^{d-1}\cdot [W])>0
    \end{align*} 
    using the projection formula and the inductive assumption.
    We also have
    \begin{align*}
        (\widetilde{H}(m)^{d-1}\cdot (\widetilde{H}(m)-\widetilde{H}(m-\epsilon))\cdot [\widetilde{Z}] )&= \sum c_{S}(\widetilde{H}(m)^{d-1}\cdot [\Delta_{0,S}]\cdot[\widetilde{Z}]) \\
        &=\sum c'_S(\widetilde{H}(m)^{d-1}\cdot [\Delta_{0,S}\cap \widetilde{Z}])
    \end{align*}
    where the $c'_S\geq 0$ are obtained by multiplying $c_S$ by the intersection multiplicity of $\Delta_{0,S}\cap Z$. As $\widetilde{H}(m)$ is nef by \Cref{prop:nefness} we deduce that for each $S$ we have $(\widetilde{H}(m)^{d-1}\cdot [\Delta_{0,S}\cap \widetilde{Z}])\geq 0$. Again, we conclude that \eqref{eq:Htilde} is strictly positive.
\end{proof}
\subsection{Projectivity of $\cM(c)$ in the pure case}
We now study the projectivity of $\overline{\mathbb{M}}_{1,n}(c)$ for $c$ a pure BKN function. We start by recalling the notion of relative semistable loci \cite[Definition 11.1]{AlperGood} when the base is a good moduli space.
\begin{definition}
Let $\mathcal{X}$ be a stack with a line bundle $\mathcal{L}$ and a good moduli space $p\colon \mathcal{X} \to X$. The \emph{relative semistable locus} $\mathcal{X}(\mathcal{L})^{ss}_X$ consists of those points $x$ such that there exists an open neighborhood $V$ of $p(x)$, an integer $d>0$ and a section $s$ of $\mathcal{L}^{d}|_{p^{-1}(V)}$ such that $s(x)\neq 0$.
\end{definition}
\begin{proposition}[\cite{AlperGood}, Theorem 11.5]\label{prop:gms of sstable locus}
    The relative semistable locus $\mathcal{U}=\mathcal{X}(\mathcal{L})^{ss}_X$ has a good moduli space $U$ and the open embedding $\mathcal{U} \hookrightarrow \mathcal{X}$ induces a quasi-projective morphism $f\colon U\to X$. Moreover, the line bundle $\mathcal{L}|_{\mathcal{U}}$ descends to an $f$-ample $\mathbb Q$-line bundle on $U$.
\end{proposition}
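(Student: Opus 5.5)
Since this statement is quoted from \cite[Theorem 11.5]{AlperGood}, we only sketch how the argument goes; it is a relative version of the GIT construction of $\operatorname{Proj}$. All three assertions are local on $X$. The definition of $\mathcal{X}(\mathcal{L})^{ss}_X$ is compatible with restriction to open subsets $V\subset X$, and good moduli spaces are compatible with base change along open immersions. We may therefore assume $X=\operatorname{Spec} A$ is affine and noetherian, so that $\mathcal{X}$ is cohomologically affine. We then consider the graded $A$-algebra
\[R=\bigoplus_{d\geq0}\Gamma(\mathcal{X},\mathcal{L}^d),\]
and for a homogeneous $s\in R_d$ we write $\mathcal{X}_s$ for its non-vanishing locus. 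By definition $\mathcal{U}=\bigcup_s\mathcal{X}_s$, and since $\mathcal{X}$ is noetherian, finitely many $s_1,\dots,s_r$ suffice.

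First, we construct the good moduli space chart by chart. The open immersion $\mathcal{X}_s\hookrightarrow\mathcal{X}$ is the complement of the zero locus of a section of a line bundle, so it is an affine morphism. Hence $\mathcal{X}_s$ is again cohomologically affine, and its good moduli space is $\operatorname{Spec}\Gamma(\mathcal{X}_s,\OO)$. This ring equals the degree-zero localisation $R_{(s)}$: a function on $\mathcal{X}_s$ is of the form $t/s^k$ with $t\in\Gamma(\mathcal{X},\mathcal{L}^{dk})$. To see this, use the exactness of $p_*$ and write $\Gamma(\mathcal{X}_s,\OO)=\varinjlim_k\Gamma(\mathcal{X},\mathcal{L}^{dk})$ along multiplication by $s$. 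The charts glue: $\mathcal{X}_s\cap\mathcal{X}_{s'}=\mathcal{X}_{ss'}$ has good moduli space $\operatorname{Spec}R_{(ss')}$, which is the corresponding localisation of both $R_{(s)}$ and $R_{(s')}$. The glued space $U=\bigcup_i\operatorname{Spec}R_{(s_i)}$ is thus an open subscheme of $\operatorname{Proj}R$, and $\mathcal{U}\to U$ is a good moduli space, because being a good moduli space is local on the target.

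Second, we address quasi-projectivity and ampleness. Here the issue is that $R$ need not be finitely generated over $A$. To get around this, we replace $R$ by a finitely generated graded subalgebra $R'$. Each $R_d=\Gamma(X,p_*\mathcal{L}^d)$ is a finite $A$-module, because $p_*$ preserves coherence \cite{AlperGood}. We choose $R'$ to contain $s_1,\dots,s_r$ and, for each $i$, finitely many numerators generating $R_{(s_i)}$ over $A$; the latter are finitely generated $A$-algebras, since they are the coordinate rings of good moduli spaces of finite type stacks. Then $R'_{(s_i)}=R_{(s_i)}$ for every $i$, so $U$ is an open subscheme of the projective $A$-scheme $\operatorname{Proj}R'$.

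The main point to check carefully is this comparison $R'_{(s_i)}\to R_{(s_i)}$. Injectivity is automatic, and surjectivity holds by the choice of generators, but one must make sure that the degree-zero localisations glue compatibly inside $\operatorname{Proj}R'$. Once this is done, after passing to a common multiple $d$ of the degrees of the $s_i$, the sheaf $\OO_{\operatorname{Proj}R'}(d)|_U$ pulls back to $\mathcal{L}^d|_{\mathcal{U}}$. This pullback is checked chart by chart via the trivialisations given by $s_i$. Hence $\mathcal{L}|_{\mathcal{U}}$ descends to a $\mathbb{Q}$-line bundle on $U$ which is the restriction of a relatively ample one, i.e. it is $f$-ample.
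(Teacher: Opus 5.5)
Your sketch is correct and is essentially the argument behind \cite[Theorem 11.5]{AlperGood}: a relative $\operatorname{Proj}$ construction modelled on Mumford's GIT, including the passage to a finitely generated graded subalgebra. The paper itself only cites that result and does not reproduce a proof. Two small imprecisions, neither of which is a gap: over affine $X$ the equality $\mathcal{U}=\bigcup_s\mathcal{X}_s$ with $s$ global is not literally the definition, and needs the observation that a section over $p^{-1}(D(f))$ has the form $t/f^k$ with $t$ global; and the identification $\Gamma(\mathcal{X}_s,\mathcal{O})=R_{(s)}$ uses that $p_*$ commutes with filtered colimits (quasi-compactness and quasi-separatedness), not that $p_*$ is exact.
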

Analogously to what happens in geometric invariant theory, relative semistability of points can be checked by studying maps from $\Theta$. First recall that the weight of a line bundle $\Lcal=[\mathbb{A}^1/\Gm]$ on $B\Gm$ is the weight of the $\Gm$-action on $\mathbb{A}^1$. For a morphism $f\colon\Theta\to\Xcal$ inducing a non-trivial homomorphism
\[\Gm\longrightarrow\on{Aut}(f(0))\simeq\Gm,\qquad t\longmapsto t^r\]
and a line bundle $\Lcal$ on $\Xcal$ with $f^*\Lcal_{|B\Gm}$ having weight $\ell$, define the normalised weight $\hat{\on{w}}(f^*\Lcal|_{B\Gm})$ of $\Lcal$ along $f$ as  $\frac{\ell}{\lvert r \rvert}$. This is independent of taking root stacks of $\Theta$ and of changing the identification $\on{Aut}(f(0))\simeq\Gm$.

\begin{lemma}[{\cite[Lemma 2.7]{DLIstable}}]\label{lm:semistability criterion}
    A point $x$ is relatively $\mathcal{L}$-semistable for a good moduli space $\mathcal{X} \to X$ if and only if for every representable morphism $f\colon\Theta\to \mathcal{X}$ that maps the generic point of $\Theta$ to $x$ the weight of $f^*\mathcal{L}|_{B\Gm}$ along $f|_{B\Gm}$ is $\geq 0$\footnote{There is a sign mistake in the original statement contained in the cited preprint.}. 
\end{lemma}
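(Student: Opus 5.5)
This is a Hilbert--Mumford type criterion, and I would prove it by reducing relative semistability to the local structure of $\mathcal{X}$ near closed points with infinite stabiliser. Relative semistability is local on $X$, and so is the weight condition. So fix $x$ and let $x_0$ be the unique closed point in the fibre $p^{-1}(p(x))$. By the local structure theorem \cite[Theorem 1.1]{AHR-local} there is an étale neighbourhood of $p(x)$, $\Spec B\to X$, such that the base change of $\mathcal{X}$ is $[\Spec A/G_{x_0}]$ with $A^{G_{x_0}}=B$. Since $\mathcal{L}$ restricted to the residual gerbe has a well-defined character, we may also shrink so that $\mathcal{L}$ becomes $\OO\otimes\chi$ for a character $\chi$ of $G_{x_0}$. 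Relative semistability of $x$ is then equivalent to the existence of some $d>0$ and a $\chi^d$-semi-invariant $s\in A$ with $s(x)\neq0$. Here I use that good moduli spaces commute with flat base change and that sections over $p^{-1}(V)$ can be computed étale locally by descent of invariants.

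In our setting $G_{x_0}$ is either finite or $\Gm$, and this makes the problem elementary. If $G_{x_0}$ is finite, then every point is semistable, because some power of $\chi$ is trivial and $s=1$ works. In that case every $\Theta$-map through $x$ with representable structure is trivial, since there is no nontrivial $\Gm\to G_{x_0}$, so the weight condition holds vacuously. If $G_{x_0}\simeq\Gm$, let $A=\bigoplus_k A_k$ be the weight grading, and let $x\in\Spec A$ be a lift. A $\Gm$-equivariant map $\Aaff^1\to\Spec A$ sending $1$ to $x$ is the same as a limit $\lim_{t\to0}t\cdot x$ (or $t^{-1}$, depending on the sign). Such a limit exists exactly when all nonzero coordinates of $x$ in the grading have weight of one fixed sign. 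The limit point lies over the same point of $X$, so these are exactly the representable $\Theta$-maps from $x$ into the fibre. Every map $\Theta\to\mathcal{X}$ with generic point $x$ has closed point in the fibre, because $p$ is constant along $\Theta$ (its good moduli space is a point). So it factors through this chart after a root stack, and normalised weights are insensitive to root stacks.

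The combinatorial heart is then the following: a $\chi^d$-semi-invariant $s$ with $s(x)\neq0$ exists if and only if every one-sided limit of $x$ has nonnegative weight for $\mathcal{L}$. Write $\chi=t^{\ell}$. If $\ell=0$, take $s=1$; the weights are also zero, so both sides hold. Suppose $\ell>0$. Then $x$ fails to be semistable exactly when every element of $\bigoplus_{k>0}A_{k\ell}$ vanishes at $x$, that is, when all positive-weight coordinates of $x$ vanish. By the description above, this happens exactly when the limit $\lim t^{-1}\cdot x$ exists, giving a $\Theta$-map along which $\mathcal{L}$ has weight $-\ell<0$ with the sign convention matching the footnote. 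The case $\ell<0$ is symmetric. Here I will use that $A$ is finitely generated, so vanishing of all generators of positive weight is equivalent to vanishing of all positive-degree semi-invariants, up to taking powers.

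I expect the main obstacle to be the bookkeeping of signs and normalisations. One has to match the identification $\on{Aut}(f(0))\simeq\Gm$, the exponent $r$, and the direction of the limit, and to check that the induced homomorphism is nontrivial precisely when $f$ is representable and nonconstant. The second delicate point is justifying that relative sections can be computed in the étale chart. For this I would use \cite[Theorem 11.5]{AlperGood}-style descent, namely that $p_*(\mathcal{L}^d)$ is coherent and commutes with flat base change on $X$.
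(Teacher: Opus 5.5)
The paper gives no proof of this lemma; it imports it from \cite[Lemma 2.7]{DLIstable} with a sign correction. Your strategy is the natural one, and it is sound: work \'etale-locally on $X$ on a chart $[\Spec A/G_{x_0}]$ in which $\mathcal{L}\simeq\OO\otimes\chi$. Relative semistability is compatible with this because $p_*\mathcal{L}^d$ is coherent and commutes with flat base change. $\Theta$-maps are compatible because $p\circ f$ factors through a point, so $f$ lifts to the base-changed chart.

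Your rank-one computation is also correct. Let $\Gm$ scale $\Aaff^1$ with weight one, set $A_k=\{a : a(t\cdot y)=t^k a(y)\}$, and take $\chi=t^\ell$ with $\ell>0$. The map $z\mapsto z^{-1}\cdot x$ has normalised weight $-\ell$, and it exists iff every positive-weight function vanishes at $x$. Conversely, a nonvanishing $a\in A_k$ with $k>0$ gives the semi-invariant $a^\ell\in A_{k\ell}$. So, with this convention for $\Theta$, the sign $\geq 0$ is the right one.

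The gap is one of scope. You prove the lemma only when every closed point has finite or $\Gm$ stabiliser (``in our setting''). That covers every use in the paper, since it is only applied to open substacks of $\Gcal_{1,n}$. The statement, however, is for an arbitrary good moduli space, where $G_{x_0}$ is any linearly reductive group, and there the step ``write $\chi=t^\ell$ and use the $\ZZ$-grading'' has no analogue.

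Take already a torus of rank two. Then $x$ is semistable iff some positive multiple of $\chi$ lies in the monoid $M_x$ of weights of semi-invariants not vanishing at $x$. A limit along a cocharacter $\lambda$ exists iff $\lambda$ pairs nonnegatively with $M_x$. Matching these two conditions is a cone-duality (Farkas) statement, not a sign check. For non-abelian $G_{x_0}$ you additionally need the Hilbert--Mumford theorem to produce a destabilising one-parameter subgroup.

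The missing input is King's Hilbert--Mumford criterion for $\chi$-semistability of a reductive group acting on an affine variety (King, 1994). You also need the fact that, over an algebraically closed field, representable maps $\Theta\to[\Spec A/G]$ are given by a point together with a primitive cocharacter along which the limit exists. With these two facts, your reduction proves the general statement unchanged. Otherwise, state your result as the case of stabilisers of rank at most one.

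Two minor points.
\begin{itemize}
\item Do not discard ``constant'' maps, as your last paragraph suggests. The representable maps $\Theta\to BG_{x_0}\to\mathcal{X}$ given by $t\mapsto t^{\pm1}$ are exactly what force $\ell=0$ at a closed point with stabiliser $\Gm$. Your main argument already includes them, as trivial limits of a fixed point.
\item No root stack is needed. Representable maps have $r=\pm1$, and they lift directly to the chart.
\end{itemize}
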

Recall \Cref{prop:core-sing-specialization}: any representable map $\Theta\to \Gcal_{1,n}$ sends the closed point to $\Atom{P}$, and it must factor either
(i) through $\Tail{P}$, or (ii) through $\Ell{P}$. In particular, setting $\lvert P\rvert=m$, both 
\begin{enumerate}
    \item[(i)] $\Tail{P}\simeq\Gcal_{1,m}(m)\times \oM_{0,P}$, and
    \item[(ii)] $\Ell{P}\simeq\left[\on{Tot}\left(\bigoplus_{i=1}^m \mathbb L_{0,\star_i}^{\vee}\right)_{\oM_{0,P}}\Bigg/\Gm\right]$
\end{enumerate} admit a projection to the variety $\oM_{0,P}$, so in any case the map from $\Theta$ will factor through a fibre of these projections. 

\begin{lemma}\label{lm:weights}
    Let $f\colon \Theta\to \mathcal{G}_{1,n}$ be as above and assume that it does not factor through $\Atom{P}$. Then
    \[
    \hat{\on{w}}(f^*\lambda|_{B\Gm})= \left\{
    \begin{matrix}
        1 &\text{if }f\text{ is of type (i)}\\ 
       -1 &\text{if }f\text{ is of type (ii)}
    \end{matrix}\right., \quad 
    \hat{\on{w}}(f^*\delta_{0,B}|_{B\Gm})=\left\{ \begin{matrix}
        -1 &\text{if }f\text{ is of type (i), }B\in P\\ 
        1 &\text{if }f\text{ is of type (ii), } B\in P\\
        0 &\text{otherwise}
    \end{matrix}\right.
    \]
\end{lemma}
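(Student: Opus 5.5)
The plan is to compute the weights by restricting everything to the closed point $f(0)=[C_{00}]\in\Atom{P}$ and reading off the $\Gm=\on{Aut}(C_{00})$-characters of the fibres of $\lambda$ and $\delta_{0,B}$, exactly as in the proof of \Cref{lem:Pic-pgms}. Since normalised weights are insensitive to root stacks and to the identification $\on{Aut}(f(0))\simeq\Gm$, we may fix the convention once and for all. We lift $f$ to the \'etale chart $[W/\Gm]$ around $[C_{00}]$, as in the proof of \Cref{prop:core-sing-specialization}. Then $f$ becomes a $\Gm$-equivariant map $\Aaff^1\to \on{Def}_q\times\prod_i\on{Def}_{\star_i}$, which by that proof factors through one of the two factors: through $\{0\}\times\prod\on{Def}_{\star_i}$ in case (i), and through $\on{Def}_q\times\{0\}$ in case (ii). Because $f$ does not factor through $\Atom{P}$, the image is a nonzero weight vector in the corresponding factor, and the sign of $r$ (the weight of $\Aaff^1\to W$) is forced by the sign of the weights on that factor.

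Next, identify the fibres $\Gm$-equivariantly. The Hodge line at $C_{00}$ is identified with the tangent line of a branch of the core at $q$, and $\delta_{0,B}$ for $B\in P$ with the tangent line of the core at $\star_B$, i.e.\ the smoothing parameter of the node $\star_B$. The $\Gm$ acts with opposite unit weights on these two lines, normalising the action of $\on{Aut}$ to be the standard scaling. For $B\notin P$, $\delta_{0,B}$ has weight $0$. This gives $\pm1$ and $\mp1$ for $\lambda$ and $\delta_{0,B}$ respectively, up to the global sign determined by $r$.

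Finally, fix the sign. In case (i) the map moves in the node-smoothing direction $\on{Def}_{\star_i}$, on which $\Gm$ acts with the weight of $\delta_{0,B}$. Since $\Theta$ has $\Aaff^1$ with weight compatible with the open orbit specialising to $0$, pulling back yields that $\delta_{0,B}$ has normalised weight $-1$ and hence $\lambda$ has $+1$. In case (ii) the deformation direction lies in $\on{Def}_q$, whose Pinkham weights are opposite, so all signs flip. Here $r$ is the (possibly non-primitive) weight of the direction, which divides out in $\hat{\on{w}}$. This sign bookkeeping, namely checking that the weight of the node smoothing parameter equals that of the tangent line of the core at $\star$ (and that $\delta_{0,B}$, as the normal bundle of the boundary, pulls back with the correct sign relative to $\Theta$'s coordinate), is the step I expect to be the main obstacle. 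I would first verify it on the simplest case $n=2$, $m=1$, i.e.\ a cusp with a $2$-marked tail, computing explicitly.
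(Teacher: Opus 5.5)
You have swapped the two types. Since the sign is essentially the only content of the lemma (the vanishing for $B\notin P$ and the opposite $\on{Aut}$-weights of $\lambda$ and $\delta_{0,B}$ are already in the proof of \Cref{lem:Pic-pgms}), the argument does not go through as written.

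Type (i) means $f$ factors through $\Tail{P}$. The core level stays $P$, so the nodes $\star_i$ attaching the tails $R_i$ to the branches of the core persist over all of $\Theta$. The lift $\Aaff^1\to\on{Def}_q\times\prod_i\on{Def}_{\star_i}$ therefore lands in $\on{Def}_q\times\{0\}$, and the whole family lies inside every $\Delta_{0,P_i}$. This is exactly how the proof of \Cref{prop:core-sing-specialization} pairs $\on{Def}_q\times\{0\}$ with ``core level preserved''. Type (ii) is the equisingular case, landing in $\{0\}\times\prod_i\on{Def}_{\star_i}$: nodes are smoothed, and the generic point of $\Theta$ leaves the corresponding $\Delta_{0,P_i}$.

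Your sign rule assigns $\hat{\on{w}}(f^*\delta_{0,B}|_{B\Gm})=-1$ to the family moving in the node-smoothing direction. With the types correctly identified, that same rule gives $-1$ in type (ii) and $+1$ in type (i), the opposite of the statement. You reach the stated values only because the swap and the unsupported assertion (``weight compatible with the open orbit specialising to $0$'') cancel. Relatedly, you cannot fix the convention freely: the absolute sign only has meaning relative to the normalisation of \Cref{lm:semistability criterion}, which is how the lemma is used in \Cref{prop:inequalities}.

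The local approach is repairable once it has a genuine anchor. Apply \Cref{lm:semistability criterion} to $\Ocal(B\Gm)$ on $\Theta$ itself: its semistable locus over the point is the open point, detected by $\on{id}_\Theta$. This shows that $\Ocal_\Theta(B\Gm)$ has positive weight on $B\Gm$ in the paper's normalisation.
\begin{itemize}
\item In type (ii), take an index $i$ whose node is smoothed. Then $f^{-1}(\Delta_{0,P_i})=B\Gm$, so $f^*\Ocal(\Delta_{0,P_i})$ is a positive multiple of $\Ocal_\Theta(B\Gm)$ and $\hat{\on{w}}=+1$. Since all $\on{Def}_{\star_j}$ carry the same $\on{Aut}$-weight, the same holds for every $B\in P$, and $\lambda$ gets $-1$.
\item In type (i), the fibre of $\delta_{0,P_i}$ is still the line $\on{Def}_{\star_i}$, but the direction of motion now lies in $\on{Def}_q$, whose Pinkham weights are opposite. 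So the sign flips: $-1$ for $\delta_{0,P_i}$ and $+1$ for $\lambda$.
\end{itemize}

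The paper avoids the chart altogether. It restricts to $\Tail{P}\simeq\Gcal_{1,m}(m)\times\oM_{0,P}$, where $j^*\lambda=\lambda^{(m)}$ and $j^*\delta_{0,P_i}=-\lambda^{(m)}-\psi_{\star_i}$, with $\psi_{\star_i}$ pulled back from $\oM_{0,P}$. Along a fibre this gives $f^*\delta_{0,P_i}=-f^*\lambda$. The absolute sign then comes from $\lambda^{(m)}$ being the weight-one tautological bundle on $\Gcal_{1,m}(m)\simeq[\Spec(A)/\Gm]$ (Lekili--Polishchuk). Type (ii) is read off from Smyth's explicit computation on $[\on{Tot}(\oplus_i\mathbb L^{\vee}_{0,\star_i})/\Gm]$.
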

\begin{proof}
Case (i): denote by $\lambda^{(m)}$ the pullback of the Hodge bundle from $\Gcal_{1,m}(m)$, and $\psi_{\star_i}$, $\mathcal{D}^{P_i}_{T_i}$ the classes pulled back from $\oM_{0,P}$. Given the embedding $j\colon \Tail{P} \to \mathcal{G}_{1,n}$ we have 
\[
j^*\lambda=\lambda^{(m)},\quad j^*\delta_{0,B}=\left\{
\begin{matrix}
-\lambda^{(m)}-\psi_{\star_i} & \text{if }B=P_i, \\
\mathcal{D}_{B}^{P_i} & \text{if }B\subsetneq P_i, \\
0 & \text{otherwise}
\end{matrix}
\right.
\]
The map $f\colon\Theta\to \mathcal{G}_{1,n}$ factors through a fiber of the projection $\Tail{P} \to \prod_i \overline{\mathcal{M}}_{0,P_i\cup\star_i}$, which can be identified with $\Gcal_{1,m}(m)$. Hence from the previous formula we deduce that
\[
f^*\lambda=\lambda^{(m)}|_{\Theta}, \quad f^*\delta_{0,B}=\left\{
\begin{matrix}
-\lambda^{(m)}|_{\Theta} & \text{if }B=P_i, \\
0 & \text{otherwise}
\end{matrix}
\right.
\]
where $\lambda^{(m)}$ is as before the Hodge line bundle on $\Gcal_{1,n}(m)$. As $\Gcal_{1,m}(m)\simeq[\Spec(A)/\Gm]$ and $\lambda^{(m)}$ restricts to the tautological line bundle on $B\Gm$ 
\cite[Theorem 1.5.7]{LekiliPolishchuk}, we deduce that its weight at the origin is $1$. 

Case (ii) is similar: the map $f\colon\Theta\to\Gun$ factors through a fibre of $\Ell{P}\to\prod_i \overline{\mathcal{M}}_{0,P_i\cup\star_i}$, which can be identified with $[\on{Tot}(\oplus_i \mathbb{L}_{0,\star_i}^{\vee})/\Gm]$. 
It follows then from Smyth's computations \cite[Example 3.5]{SmythII} that $f^*\lambda$ has weight $-1$ and $f^*\delta_{0,B}$ has weight equal to $1$ when $B=P_i$ and equal to zero when $B\not\in P$. 
\end{proof}
\begin{proposition}\label{prop:inequalities}
Suppose that $c$ is a pure BKN function. Then $\oM_{1,n}(c) \subset \Gcal_{1,n}(m)$ is a relative $\mathcal{L}$-semistable locus for the line bundle $\mathcal{L}=a\lambda + \sum_{S} b_S \delta_{0,S}$ if and only if there exist values $a$, $\{b_S\}_{S\subset [n]}$ such that for every partition of the form $P=\{P_1,\ldots, P_m\}$ we have
    \begin{equation}\label{eq:inequalities}
    \tag{$\star$}
    \left\{
    \begin{matrix}
        a - \sum_{\lvert P_i \rvert \geq 2} b_{P_i}<0 & \text{ if }c(P)=1,\\
        a - \sum_{\lvert P_i \rvert \geq 2} b_{P_i}=0 & \text{ if }c(P) = \frac{1}{2}, \\
        a - \sum_{\lvert P_i \rvert \geq 2} b_{P_i}>0 & \text{ if }c(P)= 0.
    \end{matrix}
    \right.
    \end{equation}
\end{proposition}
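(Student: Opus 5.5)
We plan to test semistability pointwise with \Cref{lm:semistability criterion}, applied to the good moduli space $\pi\colon\Gcal_{1,n}(m)\to\cG(m)$. Both $\oM_{1,n}(c)$ and the relative semistable locus $\Gcal_{1,n}(m)(\Lcal)^{ss}_{\cG(m)}$ are open. So it suffices to check that a geometric point $x$ of $\Gcal_{1,n}(m)$ lies in $\oM_{1,n}(c)$ if and only if every representable $f\colon\Theta\to\Gcal_{1,n}(m)$ with $f(1)=x$ satisfies $\hat{\on{w}}(f^*\Lcal|_{B\Gm})\ge 0$.

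By \Cref{prop:core-sing-specialization}, such an $f$ has closed point in some $\Atom{P}$. Since the stabilisers in $\Gcal_{1,n}(m)$ are nontrivial only there, we have $|P|=m$. Moreover, $f$ is either constant in $\Atom{P}$, of type (i) (inside $\Tail{P}$), or of type (ii) (inside $\Ell{P}$). Set $w_P:=a-\sum_{|P_i|\ge2}b_{P_i}$. By \Cref{lm:weights} and linearity, the normalised weight of $\Lcal$ is $w_P$ for type (i) and $-w_P$ for type (ii). For a map factoring through $\Atom{P}$ the normalised weight is $\pm w_P$, depending on the orientation of the $\Gm$. In the statement, ``there exist $a,b_S$'' means that $\Lcal$ is any line bundle whose coefficients satisfy \eqref{eq:inequalities}. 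We prove that $\oM_{1,n}(c)=\Gcal_{1,n}(m)(\Lcal)^{ss}$ if and only if \eqref{eq:inequalities} holds.

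Next we classify the points of $\Gcal_{1,n}(m)$. Those with finite stabiliser and no isotrivial degeneration are always semistable. Purity guarantees that they all lie in $\oM_{1,n}(c)$: $c$ is $1$ below level $m$ and $0$ above, so only level-$m$ strata can be excluded. The relevant points are those admitting a nontrivial isotrivial degeneration to an atom of level $P$ with $|P|=m$. These are the points of $\Tail{P}\smallsetminus\Atom{P}$ (type (i) sources), the points of $\Ell{P}\smallsetminus\Atom{P}$ (type (ii) sources), and the atoms themselves. By \Cref{prop:core-sing-specialization} each such point degenerates only within its own $P$. We check that every point of $\Tail{P}\smallsetminus\Atom{P}$ and of $\Ell{P}\smallsetminus\Atom{P}$ admits such a degeneration. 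For type (i) this uses the positive-weight presentation $\Gcal_{1,m}(m)\simeq[\Spec A/\Gm]$ from \Cref{lm:Gmm}: every point flows to the origin. For type (ii), scaling the crimping datum on the fibre $[\on{Tot}(\oplus\mathbb L^\vee)/\Gm]$ degenerates every point to the atom.

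We then compare the two sides, case by case. Both directions follow from the same computation.
\begin{itemize}
\item If $c(P)=1$, then $\oM_{1,n}(c)$ contains $\Ell{P}$ but no curve of core level $P$. Semistability must therefore hold for type (ii) points and fail for type (i) points, which means $-w_P\ge0$ and $w_P<0$, i.e.\ $w_P<0$.
\item If $c(P)=0$, the roles are reversed, giving $w_P>0$.
\item If $c(P)=\tfrac12$, both strata and the atom must be semistable, which forces $w_P\ge0$ and $-w_P\ge0$, i.e.\ $w_P=0$.
\end{itemize}
Here we also need that the atom is semistable exactly when $w_P=0$. The atom maps to itself by both orientations, with weights $\pm w_P$. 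When $w_P=0$ this is consistent with \Cref{lem:Pic-pgms}, where $\Lcal$ descends near $\pi(\Atom{P})$. Conversely, if \eqref{eq:inequalities} holds, the same table shows that a point is semistable exactly when it is $c$-stable.

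The step we expect to be the main obstacle is the sign conventions. We must make sure that the normalised weight of a map of type (i) or (ii) does not depend on the choice of root stack or of the identification $\on{Aut}\simeq\Gm$. We must also make sure that \Cref{lm:weights} applies with a consistent orientation for both types, so that the atom gets weights $\pm w_P$ and the type (i) and type (ii) sources get opposite signs. A lesser issue is that the semistability criterion is relative to $\cG(m)$. This is harmless, since all the degenerations above stay inside fibres of $\pi$.
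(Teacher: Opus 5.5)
Your proposal is correct and follows the paper's own proof: apply \Cref{lm:semistability criterion}, use \Cref{prop:core-sing-specialization} to reduce to maps of type (i) or (ii) into atoms of level $m$, use \Cref{lm:weights} to turn the weight into the sign of $\pm w_P$ with $w_P=a-\sum_{\lvert P_i\rvert\geq 2}b_{P_i}$, and split into the three cases according to $c(P)$. You also make explicit several points the paper leaves implicit: every non-atomic point of $\Tail{P}$ and $\Ell{P}$ really degenerates isotrivially to the atom, the atom itself is semistable exactly when $w_P=0$, and purity places every remaining point of $\Gcal_{1,n}(m)$ inside $\oM_{1,n}(c)$.
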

\begin{proof}
    We apply \Cref{lm:semistability criterion}. Any $f\colon\Theta \to \Gcal_{1,n}(m)$ that generically factors through $\oM_{1,n}(c)$ must map $B\Gm$ to a curve in $\Atom{P}$ with $\lvert P \rvert=m$. We know all the possible such maps thanks to \Cref{prop:core-sing-specialization}:
    \begin{itemize}
        \item we have $c(P)=1$ if and only if $\hat{\on{w}}(f^*\mathcal{L}|_{B\Gm})\geq 0$ when $f$ factors through $\Ell{P}$, and $\hat{\on{w}}(f^*\mathcal{L}|_{B\Gm})< 0$ when $f$ factors through $\Tail{P}$; 
        \item we have $c(P)=0$ if and only if $\hat{\on{w}}(f^*\mathcal{L}|_{B\Gm})< 0$ when $f$ factors through $\Ell{P}$, and $\hat{\on{w}}(f^*\mathcal{L}|_{B\Gm})\geq 0$ when $f$ factors through $\Tail{P}$;
        \item we have $c(P)=\frac{1}{2}$ if and only if $\hat{\on{w}}(f^*\mathcal{L}|_{B\Gm})\geq 0$ in both cases.
    \end{itemize}
    \Cref{lm:weights} translates these conditions into the claimed (in)equalities on the coefficients of $\mathcal{L}$.
\end{proof}
\begin{remark}
    This provides an explicit description of the semistability (or, what is the same, $\mathbb{Q}$-factorialisation) fan of $\cG(m)$, see \cite[\S 2]{GoriThesis} where the theory is developed.
    
    Given a pure BKN function $c$ and a minimal partition $P$ such that $c(P)=0$, it is always possible to produce BKN functions $c_0$ and $c_+$ by changing the value of $c$ at $P$ to $1/2$, resp. $1$. Locally around $\Tail{P}$, this operation produces the standard local model of a divisorial contraction (if $P=\disc{S}$) or a flip (if $P$ has more than one part with more than one element) \cite[\S 1]{ReidFlip}. Yet, the (global) conditions for the existence of a line bundle $\Lcal$ making these into minimal model theoretic $\Lcal$-flips are stricter: roughly speaking, turning in a $P$-bridge in exchange for a $P$-fold elliptic point forces us to do the same to other partitions $P'$, \emph{if} we insist on staying projective. Alternatively, switching one $P$ at a time will produce a (plethora of) non-projective model(s).
\end{remark}

\begin{corollary}\label{cor:star implies proj}
    Let $c$ be a pure BKN function satisfying the conditions \eqref{eq:inequalities} of \Cref{prop:inequalities}. Then $\overline{\mathbb{M}}_{1,n}(c)$ is projective.
\end{corollary}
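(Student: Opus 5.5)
By \Cref{prop:inequalities}, the conditions \eqref{eq:inequalities} say exactly that $\oM_{1,n}(c)$ is the relative semistable locus $\Gcal_{1,n}(m)(\Lcal)^{ss}_{\cG(m)}$ for $\Lcal=a\lambda+\sum_S b_S\delta_{0,S}$. The plan is to combine relative ampleness over $\cG(m)$, coming from this GIT picture, with the absolute ampleness of $H(m)$ on $\cG(m)$ from \Cref{thm:G(m) proj}.

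\textbf{Step 1.} Apply \Cref{prop:gms of sstable locus} with $\Xcal=\Gcal_{1,n}(m)$ and $X=\cG(m)$. It says that the good moduli space $U$ of the semistable locus, which here is $\cM(c)$, comes with a quasi-projective morphism $f\colon\cM(c)\to\cG(m)$. It also says that $\Lcal|_{\oM_{1,n}(c)}$ descends to an $f$-ample $\mathbb Q$-line bundle $L$ on $\cM(c)$.

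\textbf{Step 2.} Upgrade $f$ to a projective morphism. By \Cref{prop:theta complete} (or \Cref{theo:classification}), $\cM(c)$ is proper. Since $\cG(m)$ is separated, $f$ is proper, so $f$ is proper and quasi-projective with an $f$-ample line bundle $L$. Hence $f$ is projective.

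\textbf{Step 3.} Conclude with the standard lemma on relatively ample bundles. By \Cref{thm:G(m) proj}, $\cG(m)$ is a projective scheme and $H(m)$ is ample on it. For $N\gg0$, the bundle $L\otimes f^*H(m)^{\otimes N}$ is then ample on $\cM(c)$, so $\cM(c)$ is projective. The only care needed is to pass to integral multiples, since $L$ is a priori only a $\mathbb Q$-line bundle.

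The main obstacle is Step 2 together with the hypothesis-checking behind Step 1. One must make sure that the good moduli space produced by \Cref{prop:gms of sstable locus} coincides with $\cM(c)$; this holds by uniqueness of good moduli spaces for the open substack $\oM_{1,n}(c)$. One must also check that ``quasi-projective'' in Alper's sense, together with properness, yields an honest projective morphism of algebraic spaces over a projective scheme. If there is any doubt, I would argue directly: $f$-ampleness of $L$ gives a closed immersion of $\cM(c)$ into $\mathrm{Proj}_{\cG(m)}\bigoplus_d f_*L^{d}$, using that $f$ is proper. It then follows that $\cM(c)$ is a closed subspace of a projective bundle over the projective scheme $\cG(m)$, and is therefore projective.
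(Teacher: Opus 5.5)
Your proposal is correct and follows the paper's argument: the paper likewise uses \Cref{prop:inequalities} and \Cref{prop:gms of sstable locus} to see that $\cM(c)$ is projective over $\cG(m)$, and then concludes with \Cref{thm:G(m) proj}. Your Steps 2 and 3 simply make explicit what the paper's one-line proof leaves implicit. These are using properness of $\cM(c)$ to upgrade quasi-projective to projective, and twisting the relatively ample bundle by a large multiple of the pullback of $H(m)$.
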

\begin{proof}
   By \Cref{prop:gms of sstable locus} we have that $\overline{\mathbb{M}}_{1,n}(c)$ is projective over $\cG(m)$, which is projective by \Cref{thm:G(m) proj}.
\end{proof}
\begin{corollary}\label{cor:projectivity of big contractions}
    Let $c$ be a pure BKN function of level $m$. Assume that $c$ is constant over every partition $P$ of length $m$ such that $P\neq \disc{S}$. Then $\overline{\mathbb{M}}_{1,n}(c)$ is projective.
\end{corollary}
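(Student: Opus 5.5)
We reduce to \Cref{cor:star implies proj}: it suffices to exhibit real numbers $a$ and $\{b_S\}$ satisfying the system \eqref{eq:inequalities} for every partition $P$ of length $m$. Let $c_0\in\{0,1/2,1\}$ be the common value of $c$ on the partitions of length $m$ that are not of the form $\disc{S}$. The plan is to take $b_S$ depending only on $\lvert S\rvert$ for the non-singleton blocks that occur in partitions of length $m$ with at least two non-singleton blocks. We use the full freedom in $b_S$ only for the sets $S$ of cardinality $n-m+1$, which appear exclusively in the partition $\disc{S}$. Indeed, a partition of $[n]$ into $m$ parts has a block of size $n-m+1$ exactly when all other blocks are singletons. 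So $\disc{S}$ with $\lvert S\rvert=n-m+1$ is precisely the set of length-$m$ partitions of the form $\disc{S}$, and for these the quantity $a-\sum_{\lvert P_i\rvert\geq 2}b_{P_i}$ reduces to $a-b_S$.

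Concretely, we start from the solution of the equalities found in the proof of \Cref{lm:Pic G(m)}, namely $a=n-m$ and $b_k=k-1$. This is the class $H(m)$, for which every expression $a-\sum b_{P_i}$ vanishes on every length-$m$ partition; one checks this by summing: $\sum_i(\lvert P_i\rvert-1)=n-m$. We then perturb by a small $\varepsilon$, setting $b_k=k-1-\varepsilon$ for $2\leq k\leq n-m$ (or $+\varepsilon$), according to $c_0$. For a partition $P\neq\disc{S}$ of length $m$, the quantity becomes $\pm\varepsilon\cdot\#\{i:\lvert P_i\rvert\geq2\}$, whose sign is uniform since that count is at least $2$. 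So we choose $b_k=k-1+\varepsilon$ if $c_0=1$ (negative expression), $b_k=k-1-\varepsilon$ if $c_0=0$ (positive), and $\varepsilon=0$ if $c_0=1/2$.

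Finally, for each $S$ with $\lvert S\rvert=n-m+1$ we choose $b_S$ independently. We set $b_S=a+1$, $b_S=a$, or $b_S=a-1$ according as $c(\disc{S})=1$, $1/2$, or $0$, which yields the required sign of $a-b_S$. All conditions of \eqref{eq:inequalities} then hold, and \Cref{prop:inequalities} together with \Cref{cor:star implies proj} give projectivity.

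The only point needing care is the combinatorial observation above: partitions of length $m$ with exactly one non-singleton block are exactly the $\disc{S}$ with $\lvert S\rvert=n-m+1$, and those blocks never occur elsewhere, so their coefficients decouple. When $c_0=1/2$ the non-$\disc{S}$ equations force nothing beyond the $H(m)$-type relations, which our choice satisfies. We also need to check that $c$ is indeed of the shape covered by \Cref{prop:inequalities}, i.e.\ pure, which is a hypothesis.
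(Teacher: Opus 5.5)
Your proposal is correct and is essentially the paper's proof. The paper also takes $\mathcal{L}=H(m)+\sum_S b_S\delta_{0,S}$, choosing the sign of $b_S$ for $\lvert S\rvert=n-m+1$ according to $c(\disc{S})$ and for $\lvert S\rvert<n-m+1$ according to the common value of $c$ on the remaining length-$m$ partitions, so that \eqref{eq:inequalities} holds and \Cref{cor:star implies proj} applies; your uniform $\pm\varepsilon$ and $\pm1$ perturbations are a particular instance of this. (Like the paper, you implicitly assume $m<n$: for $m=n$ the normalisation $a=n-m=0$ cannot give the strict inequality required when $c$ is $0$ or $1$ on the discrete partition, but there one simply takes $\mathcal{L}=\pm\lambda$ with the appropriate sign.)
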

\begin{proof}
    Consider the line bundle $\mathcal{L}=H(m) + \sum b_{S} \delta_{0,S}$ where
    \begin{itemize}
        \item if $|S|=n-m+1$ we pick $b_S>0$ when $c(\disc{S})=1$, $b_S=0$ when $c(\disc{S})=\frac{1}{2}$ and $b_S<0$ when $c(\disc{S})=0$;
        \item if $|S|< n-m+1$, given any partition $P\neq \disc{S}$ of length $m$, we pick $b_S>0$ when $c(P)=1$, $b_S=0$ when $c(P)=\frac{1}{2}$ and $b_S<0$ when $c(P)=0$.
    \end{itemize} 
    This choice of coefficients satisfies \eqref{eq:inequalities}.
\end{proof}
\begin{theorem}\label{thm:condition for proj}
    Let $c$ be a pure BKN function. The stack $\oM_{1,n}(c)$ has a projective good moduli space if and only if $c$ satisfies \eqref{eq:inequalities}.
\end{theorem}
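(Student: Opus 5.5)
One direction is already available: if $c$ satisfies \eqref{eq:inequalities}, then \Cref{cor:star implies proj} gives projectivity of $\cM(c)$. So the work is the converse. We assume $\cM(c)$ is projective and must produce real numbers $a,\{b_S\}$ satisfying \eqref{eq:inequalities}. The plan is to show that $\oM_{1,n}(c)$ is a relative semistable locus inside $\Gun(m)$ for a suitable line bundle, and then read off the inequalities from \Cref{prop:inequalities}.

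First, let $A$ be an ample $\mathbb{Q}$-line bundle on $\cM(c)$. Its pullback $\mathcal{L}_0$ to $\oM_{1,n}(c)$ is a $\mathbb{Q}$-line bundle on an open substack of $\Gun(m)$. Since $c$ is pure of level $m$, the complement of $\oM_{1,n}(c)$ in $\Gun(m)$ has codimension at least two away from the divisors $\Delta_{0,S}$. The divisors are $\lambda$ and the $\delta_{0,S}$ with $c(\disc{S})<1$, possibly missing some $\delta_{0,S}$ with $\lvert S\rvert=n-m+1$. So by \Cref{lem:lb_on_Gun} and \Cref{rmk:lb_on_Gun}, $\mathcal{L}_0$ extends to $\mathcal{L}=a\lambda+\sum b_S\delta_{0,S}$ on $\Gun(m)$. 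Missing coefficients are fixed later, and the extension is unique up to the $\delta_{0,S}$ that are not divisors on $\oM_{1,n}(c)$.

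Next, we verify the (in)equalities partition by partition, for every $P$ with $\lvert P\rvert=m$. If $c(P)=\tfrac12$, then $\mathcal{L}_0$ descends, and \Cref{lem:Pic-pgms} gives the equality. If $c(P)\in\{0,1\}$, we use the fact that $A$ is ample on the open $\cM(c)$, which maps to $\cG(m)$ by a proper morphism. For $c(P)=0$, the $P$-bridges $\Tail{P}\smallsetminus\Atom{P}$ lie in $\oM_{1,n}(c)$, and the fibre of $\cM(c)\to\cG(m)$ over the image of $\Atom{P}$ contains the complete family $\overline{\mathbb M}_{1,m}(m-1)\times\{pt\}$ appearing in \Cref{lm:exc in Tm}. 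Restricting $\mathcal{L}$ to it gives $(a-\sum b_{P_i})\lambda^{(m)}$ by the formulas in the proof of \Cref{lm:weights}, and $\lambda^{(m)}$ is ample on $\overline{\mathbb M}_{1,m}(m-1)$. Ampleness of $A$ therefore forces $a-\sum b_{P_i}>0$. For $c(P)=1$, the fibre contains instead the crimping family $\mathbb{P}(\oplus\mathbb{L}^\vee_{\star_i})$ inside $\Ell{P}$, on which $\lambda$ has negative degree and $\delta_{0,P_i}$ positive degree (Smyth's computation cited in \Cref{lm:weights}). This yields $a-\sum b_{P_i}<0$. Where some $b_S$ with $\lvert S\rvert=n-m+1$ was undetermined, $\disc{S}$ is the only relevant partition, so we simply choose $b_S$ to make the sign correct. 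This choice interacts with no other equation, as in the proof of \Cref{lm:Pic G(m)}.

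The main obstacle is the step for $c(P)\in\{0,1\}$. Ampleness of $A$ must be converted into a sign on these curves, which requires identifying the contracted fibres correctly and computing $\mathcal{L}$ on them. If that computation resists, the fallback is a cleaner route: show directly that $\oM_{1,n}(c)$ equals the relative semistable locus $\Gun(m)(\mathcal{L})^{ss}_{\cG(m)}$. A section of $A^d$ pulls back and extends across codimension-two loci by normality, which gives semistability of $c$-stable points. Conversely, the semistable locus has a proper good moduli space by \Cref{prop:gms of sstable locus}, so by \Cref{theo:classification} it is some $\oM_{1,n}(c')\supseteq\oM_{1,n}(c)$. Separatedness over $\cG(m)$ then forces $c'=c$, and \Cref{prop:inequalities} concludes.
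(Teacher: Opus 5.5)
Your main argument is correct, but it proves the converse by a different route from the paper. The paper never evaluates the ample class on test families. It introduces an auxiliary pure function $c'$, equal to $c$ on the partitions $\disc{S}$ and to $\tfrac12$ on all other partitions of length $m$, whose moduli space is projective by \Cref{cor:projectivity of big contractions}. Since $\oM_{1,n}(c)\subset\oM_{1,n}(c')$ has complement of codimension at least two and $\cM(c)\to\cM(c')$ is projective, the semistability lemmas of \cite{DLIstable} make $\oM_{1,n}(c)$ a relative semistable locus over $\cM(c')$, hence over $\cG(m)$. \Cref{prop:inequalities} then turns this into \eqref{eq:inequalities}.

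You instead pull back an ample class and read off the signs directly:
\begin{itemize}
\item when $c(P)=\tfrac12$, equality comes from descent (\Cref{lem:Pic-pgms});
\item when $c(P)=0$, strict inequality comes from positivity on the genus-one fibres $\overline{\mathbb M}_{1,m}(m-1)$ of $\Tail{P}\smallsetminus\Atom{P}$ over points of $\oM_{0,P}$;
\item when $c(P)=1$, it comes from the crimping spaces $\mathbb P^{k-1}$ inside $\Ell{P}\smallsetminus\Atom{P}$, where $k$ is the number of blocks of size at least two;
\item the free coefficients $b_S$ with $\lvert S\rvert=n-m+1$ absorb the one case, $P=\disc{S}$ with $c(P)=1$, where the crimping space is a point.
\end{itemize}
Your route is more elementary and self-contained. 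It needs no auxiliary $c'$, no external semistability lemmas, and not even \Cref{prop:inequalities}, and it shows that the pulled-back ample class itself satisfies \eqref{eq:inequalities}. The paper's route is more conceptual: it directly gives the relative-semistable-locus structure inside $\Gun(m)$ without any explicit intersection computation.

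Two remarks. First, in the case $c(P)=1$, ``$\lambda$ negative and $\delta_{0,P_i}$ positive'' is not quite enough. You need the restrictions to $\mathbb P^{k-1}$ to be $\mathcal O(-1)$ and $\mathcal O(1)$, of equal magnitude, so that $\mathcal L$ restricts to $\mathcal O\bigl(\sum b_{P_i}-a\bigr)$. This does hold: line bundles on $[\mathbb A^k/\Gm]$ are classified by their weight, $\delta_{0,P_i}$ cuts out the hyperplane $\{v_i=0\}$, and \Cref{lm:weights} gives $\lambda$ and $\delta_{0,P_i}$ opposite unit weights. You should say so explicitly.

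Second, your fallback is unnecessary, and as stated its last step fails. Separatedness over $\cG(m)$ does not prevent the semistable locus from being strictly larger than $\oM_{1,n}(c)$; for instance, $\oM_{1,n}(m-1)\subsetneq\Gun(m)$ both have proper good moduli spaces. Also, extending sections across codimension two breaks down along $\Delta_{0,S}$ with $\lvert S\rvert=n-m+1$ and $c(\disc{S})=1$, since that complement is a divisor. This is exactly why the paper chooses $c'$ to agree with $c$ on the $\disc{S}$.
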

\begin{proof}
    One implication is \Cref{cor:star implies proj}.
    Assume that $\oM_{1,n}(c)$ has a projective good moduli space. Define the auxiliary pure BKN function $c'$, having the same weight $m$ as $c$, by setting $c'(P)=c(P)$ if $P=\disc{S}$ and $\frac{1}{2}$ otherwise. By \Cref{cor:projectivity of big contractions} the good moduli space $\overline{\mathbb{M}}_{1,n}(c')$ is projective. 
    
    Consider the open embedding $\oM_{1,n}(c)\subset \oM_{1,n}(c')$. By construction, its complement has codimension $\geq 2$, and the induced morphism $\overline{\mathbb{M}}_{1,n}(c) \to \overline{\mathbb{M}}_{1,n}(c')$ is projective. By \Cref{rmk:lb_on_Gun} the stack $\oM_{1,n}(c')$ is normal and all the line bundles on $\oM_{1,n}(c)$ extend to line bundles on $\oM_{1,n}(c')$; we can thus apply \cite[Lemma 2.18]{DLIstable} to deduce that $\oM_{1,n}(c)$ is a relative semistable locus for $\oM_{1,n}(c') \to \overline{\mathbb{M}}_{1,n}(c')$. By \cite[Proposition 2.16]{DLIstable} or via a direct computation this implies that $\oM_{1,n}(c)$ is a relative semistable locus for $\mathcal{G}_{1,n}(m) \to \cG(m)$.
\end{proof}
\begin{remark}
    The proof above shows that it is sufficient to verify \eqref{eq:inequalities} for $P\neq \disc{S}$, as we can always find coefficients $b_{S}$, $\lvert S \rvert=n-m+1$, such that the (in)equalities indexed by $P=\disc{S}$ are satisfied, and the values of such $b_{S}$ do not affect the (non-)existence of a solution to the rest of the (in)equalities.
\end{remark}

\begin{corollary}\label{cor:number of non-prog gms}
    The following hold true:
    \begin{enumerate}[leftmargin=.5cm]
        \item for $m=1,2,n-1,n$ all the pure stacks $\oM_{1,n}(c) \subset \mathcal{G}_{1,n}(m)$ have a projective good moduli space;
        \item otherwise, there are stacks $\oM_{1,n}(c)$, both Deligne-Mumford and not, whose moduli spaces are not projective.
    \end{enumerate}
    Moreover, in the second case, there are at least $\sum_{i=d+1}^{S(n,m)} \binom{S(n,m)}{i}$ Deligne-Mumford stacks with a non-projective moduli space, where $d= \sum_{i=2}^{n-m+1} \binom{n}{i}$ and $S(n,m)$ is the number of partitions of $n$ of rank $m$ (Stirling number of the second kind). 
\end{corollary}
\begin{remark}
    The result for $m=2$ should be compared with \cite{CTV1,CTV2}, where the second step (first flip) in the Hassett-Keel program is decomposed into a series of elementary flips according to the minimal model program: the projectivity of good moduli spaces of curves with at worst tacnodal singularities holds in all genera.
\end{remark}
\begin{proof}
Consider the $\mathbb{Q}$-vector space $V=\Pic_{\mathbb{Q}}(\mathcal{G}_{1,n}(m))$ generated by $\lambda$ and the classes $\delta_{0,S}$, with coordinates $a$ and $b_S$, for $2\leq |S|\leq n-m+1$. This vector space has dimension $d+1=\sum_{i=2}^{n-m+1}\binom{n}{i}+1$. For a partition $P$ of length $m$, let us denote by $H_P \subset V$ the affine hyperplane defined by the equation $\sum_{S\in P, |S|\geq 2} b_{S}=a$. 
The number of such hyperplanes is equal to $S(n,m)$. 
\Cref{thm:condition for proj} ties together the geometry of the central arrangement of the hyperplanes $\{H_P\}$ with the projectivity of the moduli spaces $\overline{\mathbb{M}}_{1,n}(c)$.

\emph{Case }$m=1,2,n-1,n$: for these values $\overline{\mathbb{M}}_{1,n}(c)$ is always projective. The cases $m=1$, $n$ are immediate; for $m=n-1$ we have $P=\disc{S}$ for every partition, thus this follows from \Cref{cor:projectivity of big contractions}. If $m=2$ the proof of \Cref{lm:Pic G(m)} shows that the $H_P$ are linearly independent, thus every BKN function $c$ of pure weight $2$ satisfies \eqref{eq:inequalities}.

\emph{Case } $m=3,\ldots,n-2 \; (n\geq5)$: the proof of \Cref{lm:Pic G(m)} shows that the $H_P$ span the subspace of $V^{\vee}$ of forms vanishing at $H(m)$ and that they are not linearly independent. This implies that there are stacks $\oM_{1,n}(c)$, both Deligne-Mumford and not, having a non-projective moduli space. In fact, the existence of linear relations among the equations defining the $H_P$ forces the existence of non-projective good moduli spaces of stacks with strictly semistable points. 

Finally, the number of regions in which the hyperplanes $H_P$ subdivide $V$ is bounded above by  the sum $\sum_{i=0}^{\dim V} \binom{S(n,m)}{i}$ (see for instance \cite[\S 2.2]{Stanley}), while the number of BKN functions of pure weight $m$ that do not assume the value $\frac{1}{2}$ is $2^{S(n,m)}$. 

Observe that we can embed the set of regions into the set of pure BKN functions with values $\neq \frac{1}{2}$ by associating to each region $R$ the pure BKN function whose value at $P$ is $1$ if $H_P>0$ in $R$, and $0$ otherwise. The image of this embedding consists of the pure BKN functions that satisfy \eqref{eq:inequalities}.
This implies that there are at least $N(n,m)=\sum_{i=d+1}^{S(n,m)} \binom{S(n,m)}{i}$ choices of pure BKN functions of weight $m$ such that $\overline{\mathbb{M}}_{1,n}(c)$ is a non-projective coarse moduli space of a Deligne-Mumford stack.
\end{proof}
\begin{corollary}\label{cor:asymptotics}
    Fix $m\geq 3$. Let $MS(n)$ be the number of Deligne-Mumford stacks $\oM_{1,n}(c) \subset \mathcal{G}_{1,n}(m)$, and let $PMS(n)$ be the number of those that have a projective coarse moduli space. Then
    \[\lim_{n\to \infty} \frac{PMS(n)}{MS(n)} = 0 \]
\end{corollary}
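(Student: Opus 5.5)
The plan is to bound $PMS(n)$ by the number of regions of the hyperplane arrangement $\{H_P\}_{P\in\Pi_{n,m}}$ in $V=\Pic_{\mathbb Q}(\Gcal_{1,n}(m))$, and to compare it with $MS(n)$, which counts the Deligne--Mumford pure stacks of level $m$.

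First, $MS(n)=2^{S(n,m)}$. Indeed, a Deligne--Mumford stack $\oM_{1,n}(c)\subset\Gcal_{1,n}(m)$ corresponds to a pure BKN function of level $m$ avoiding the value $1/2$. Such a function is an arbitrary assignment $\Pi_{n,m}\to\{0,1\}$. The BKN condition imposes nothing within a single rank, since partitions with the same number of parts are pairwise incomparable, and $c$ is already forced on the other ranks.

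Second, as in the proof of \Cref{cor:number of non-prog gms}, \Cref{thm:condition for proj} shows that the projective ones inject into the set of regions of the central arrangement $\{H_P\}$ in $V$. Here $\dim V=d+1$ with $d=\sum_{i=2}^{n-m+1}\binom{n}{i}\le 2^n$. The standard bound \cite[\S 2.2]{Stanley} then gives
\[PMS(n)\le \sum_{i=0}^{d+1}\binom{S(n,m)}{i}.\]

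It remains to show that this sum is $o(2^{S(n,m)})$. Since $m\ge 3$ is fixed, $S(n,m)\sim m^n/m!$. Hence $d+1\le 2^n+1$ is exponentially smaller than $N:=S(n,m)$; in particular $d+1\le N/2$ for $n$ large. For $k\le N/2$ we have the elementary bound
\[\sum_{i\le k}\binom{N}{i}\le \left(\frac{eN}{k}\right)^k\le (eN)^k .\]
Taking $k=d+1$, the ratio is at most
\[\exp\bigl((2^n+1)(1+\log N)-N\log 2\bigr).\]
Since $\log N=O(n)$ while $N$ grows like $m^n/m!$ with $m\ge3$, the exponent tends to $-\infty$, and therefore $PMS(n)/MS(n)\to0$.

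The main obstacle is the second step: making sure that $PMS(n)$ is genuinely bounded by the number of regions. This requires the injectivity of $c\mapsto$ region, which is exactly the content of \Cref{thm:condition for proj} restricted to functions without the value $1/2$. One also has to check that the regions of a central arrangement are counted by the same bound as for a generic affine one, which the cited bound does cover. The asymptotic estimate itself is routine once the ratio is written as the binomial tail above.
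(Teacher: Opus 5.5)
Your proposal is correct and follows the paper's proof essentially step for step. You identify $MS(n)=2^{S(n,m)}$, use \Cref{thm:condition for proj} to bound $PMS(n)$ by the number of regions of the arrangement $\{H_P\}$ in $V$ (at most $\sum_{i\le \dim V}\binom{S(n,m)}{i}$), and then let $\dim V\le 2^n$ be beaten by $S(n,m)\sim m^n/m!$. The only cosmetic difference is that you apply $\sum_{i\le k}\binom{N}{i}\le (eN/k)^k$ directly, whereas the paper first passes through $(D+1)\binom{S}{D}$.
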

\begin{proof}
The denominator is the number of integral BKN functions of pure weight $m$, that is $2^{S(n,m)}$. Now let us estimate the numerator.

Set $D=\dim(V)$ and $S=S(n,m)$. Recall that $D\leq2^n$ and that $S(n,m)\sim \frac{m^n}{m!}$ for $n\to \infty$, so $D=o(S)$ and in particular $D\leq S/2$ for large $n$.

The number of regions in which the arrangement $\{H_P\}$ subdivides $V$ is less or equal to $\sum_{i=0}^{D} \binom{S}{i}$, which is bounded above by $(D+1)\binom{S}{D}$, and so (by a standard upper bound on binomial coefficients) by: \[(D+1)\left(\frac{eS}{D}\right)^D.\]

Now, taking logarithms, we get:
\[\log\frac{PMS(n)}{MS(n)}\leq \log(D+1)+D(1+\log S-\log D)-S\log 2\]

Bounding $D$ by $2^n$ and $S$ by $m^n$, we obtain:
\[\log\frac{PMS(n)}{MS(n)}\leq \log(2^n+1)+2^n(1+n\log m)-S\log 2.\]

The first two (positive) terms can be estimated using $n2^n$ (up to a constant depending on $m$); similarly, the third (negative) term can be estimated using $-\frac{\log 2}{m!}m^n$ up to a term of order $(m-1)^n$ (times a constant depending on $m$). Since both $n2^n$ and $(m-1)^n$ are $o(m^n)$ for fixed $m\geq3$ and large $n$, the right-hand side above behaves like \[-\frac{\log 2}{m!}m^n\to -\infty \qquad\text{for }n\to+\infty.\]

\end{proof}

\bibliographystyle{alpha}
\bibliography{Bibliography.bib}

@book{Alper-book,
    author = {Alper, Jarod} ,
    title = {Stacks and moduli},
    publisher = {available online at \url{https://sites.math.washington.edu/~jarod/moduli.pdf}},
    year = {2026}
}

@article{AHR-local,
    AUTHOR = {Alper, Jarod and Hall, Jack and Rydh, David},
     TITLE = {The \'etale local structure of algebraic stacks},
   JOURNAL = {Ann. Sci. \'Ec. Norm. Sup\'er. (4)},
  FJOURNAL = {Annales Scientifiques de l'\'Ecole Normale Sup\'erieure.
              Quatri\`eme S\'erie},
    VOLUME = {59},
      YEAR = {2026},
    NUMBER = {1},
     PAGES = {125--198},
      ISSN = {0012-9593,1873-2151},
   MRCLASS = {14A20},
  MRNUMBER = {5068605},
}

@article{DLIstable,
  author  = {Di Lorenzo, Andrea and Inchiostro, Giovanni},
  title   = {Stable maps to quotient stacks with a properly stable point},
  journal = {arXiv},
  year    = {2024},
  doi     = {10.48550/arxiv.2411.16141}
}

@article {AFS-projectivity,
    AUTHOR = {Alper, Jarod and Fedorchuk, Maksym and Smyth, David Ishii},
     TITLE = {Second flip in the {H}assett-{K}eel program: projectivity},
   JOURNAL = {Int. Math. Res. Not. IMRN},
  FJOURNAL = {International Mathematics Research Notices. IMRN},
      YEAR = {2017},
    NUMBER = {24},
     PAGES = {7375--7419},
      ISSN = {1073-7928,1687-0247},
   MRCLASS = {14H10 (14D23)},
  MRNUMBER = {3802125},
MRREVIEWER = {Scott\ R.\ Nollet},
       DOI = {10.1093/imrn/rnw216},
       URL = {https://doi.org/10.1093/imrn/rnw216},
}

@article {BCHM,
    AUTHOR = {Birkar, Caucher and Cascini, Paolo and Hacon, Christopher D.
              and McKernan, James},
     TITLE = {Existence of minimal models for varieties of log general type},
   JOURNAL = {J. Amer. Math. Soc.},
  FJOURNAL = {Journal of the American Mathematical Society},
    VOLUME = {23},
      YEAR = {2010},
    NUMBER = {2},
     PAGES = {405--468},
      ISSN = {0894-0347,1088-6834},
   MRCLASS = {14E30 (14E05)},
  MRNUMBER = {2601039},
MRREVIEWER = {Mark\ Gross},
       DOI = {10.1090/S0894-0347-09-00649-3},
       URL = {https://doi.org/10.1090/S0894-0347-09-00649-3},
}

@book {LazarsfeldI,
    AUTHOR = {Lazarsfeld, Robert},
     TITLE = {Positivity in algebraic geometry. {I}},
    SERIES = {Ergebnisse der Mathematik und ihrer Grenzgebiete. 3. Folge. A
              Series of Modern Surveys in Mathematics [Results in
              Mathematics and Related Areas. 3rd Series. A Series of Modern
              Surveys in Mathematics]},
    VOLUME = {48},
      NOTE = {Classical setting: line bundles and linear series},
 PUBLISHER = {Springer-Verlag, Berlin},
      YEAR = {2004},
     PAGES = {xviii+387},
      ISBN = {3-540-22533-1},
   MRCLASS = {14-02 (14C20)},
  MRNUMBER = {2095471},
MRREVIEWER = {Mihnea\ Popa},
       DOI = {10.1007/978-3-642-18808-4},
       URL = {https://doi.org/10.1007/978-3-642-18808-4},
}

@article {FS-ample,
    AUTHOR = {Fedorchuk, Maksym and Smyth, David Ishii},
     TITLE = {Ample divisors on moduli spaces of pointed rational curves},
   JOURNAL = {J. Algebraic Geom.},
  FJOURNAL = {Journal of Algebraic Geometry},
    VOLUME = {20},
      YEAR = {2011},
    NUMBER = {4},
     PAGES = {599--629},
      ISSN = {1056-3911,1534-7486},
   MRCLASS = {14H10 (14C20 14D20 14N10)},
  MRNUMBER = {2819671},
MRREVIEWER = {Flaminio\ Flamini},
       DOI = {10.1090/S1056-3911-2011-00547-X},
       URL = {https://doi.org/10.1090/S1056-3911-2011-00547-X},
}

@article{BCW,
      title={Vector bundles on {O}lsson fans}, 
      author={Luca Battistella and Francesca Carocci and Jonathan Wise},
      journal = {arXiv},
      year={2026},
      eprint={2602.05685},
      archivePrefix={arXiv},
      primaryClass={math.AG},
      url={https://arxiv.org/abs/2602.05685}, 
}

@ARTICLE{BDL6,
    AUTHOR = {Battistella, Luca and Di Lorenzo, Andrea},
     TITLE = {Integral {C}how rings of modular compactifications of {$\mathcal
              M_{1,n\leq6}$}},
   JOURNAL = {J. Reine Angew. Math.},
  FJOURNAL = {Journal f\"ur die Reine und Angewandte Mathematik. [Crelle's
              Journal]},
    VOLUME = {837},
      YEAR = {2026},
     PAGES = {289--326},
      ISSN = {0075-4102,1435-5345},
   MRCLASS = {14H10 (14D23)},
  MRNUMBER = {5110503},
       DOI = {10.1515/crelle-2026-0039},
       URL = {https://doi.org/10.1515/crelle-2026-0039},
}

@article {LekiliPolishchuk,
    AUTHOR = {Lekili, Yank\i{} and Polishchuk, Alexander},
     TITLE = {A modular compactification of {$\mathcal{M}_{1,n}$} from
              {$A_\infty$}-structures},
   JOURNAL = {J. Reine Angew. Math.},
  FJOURNAL = {Journal f\"ur die Reine und Angewandte Mathematik. [Crelle's
              Journal]},
    VOLUME = {755},
      YEAR = {2019},
     PAGES = {151--189},
      ISSN = {0075-4102,1435-5345},
   MRCLASS = {14F08 (14H10)},
  MRNUMBER = {4015231},
MRREVIEWER = {Abdelmoubine\ Amar\ Henni},
       DOI = {10.1515/crelle-2017-0015},
       URL = {https://doi.org/10.1515/crelle-2017-0015},
}

@article {Stevens,
    AUTHOR = {Stevens, Jan},
     TITLE = {The versal deformation of elliptic {$m$}-fold point curve
              singularities},
   JOURNAL = {Res. Math. Sci.},
  FJOURNAL = {Research in the Mathematical Sciences},
    VOLUME = {12},
      YEAR = {2025},
    NUMBER = {4},
     PAGES = {Paper No. 66, 16},
      ISSN = {2522-0144,2197-9847},
   MRCLASS = {14B05 (14H10 32S05)},
  MRNUMBER = {4954650},
MRREVIEWER = {Ragni\ Piene},
       DOI = {10.1007/s40687-025-00553-3},
       URL = {https://doi.org/10.1007/s40687-025-00553-3},
}

@article {CTV1,
    AUTHOR = {Codogni, Giulio and Tasin, Luca and Viviani, Filippo},
     TITLE = {On the first steps of the minimal model program for the moduli
              space of stable pointed curves},
   JOURNAL = {J. Inst. Math. Jussieu},
  FJOURNAL = {Journal of the Institute of Mathematics of Jussieu. JIMJ.
              Journal de l'Institut de Math\'ematiques de Jussieu},
    VOLUME = {22},
      YEAR = {2023},
    NUMBER = {1},
     PAGES = {145--211},
      ISSN = {1474-7480,1475-3030},
   MRCLASS = {14H10 (14D23 14E30 14H20)},
  MRNUMBER = {4556931},
MRREVIEWER = {Dawei\ Chen},
       DOI = {10.1017/S1474748021000116},
       URL = {https://doi.org/10.1017/S1474748021000116},
}

@article {ArbarelloCornalba,
    AUTHOR = {Arbarello, Enrico and Cornalba, Maurizio},
     TITLE = {Calculating cohomology groups of moduli spaces of curves via
              algebraic geometry},
   JOURNAL = {Inst. Hautes \'Etudes Sci. Publ. Math.},
  FJOURNAL = {Institut des Hautes \'Etudes Scientifiques. Publications
              Math\'ematiques},
    NUMBER = {88},
      YEAR = {1998},
     PAGES = {97--127},
      ISSN = {0073-8301,1618-1913},
   MRCLASS = {14H10 (14C30)},
  MRNUMBER = {1733327},
MRREVIEWER = {Elham\ Izadi},
       URL = {http://www.numdam.org/item?id=PMIHES_1998__88__97_0},
}

@article {CTV2,
    AUTHOR = {Codogni, Giulio and Tasin, Luca and Viviani, Filippo},
     TITLE = {On some modular contractions of the moduli space of stable
              pointed curves},
   JOURNAL = {Algebra Number Theory},
  FJOURNAL = {Algebra \& Number Theory},
    VOLUME = {15},
      YEAR = {2021},
    NUMBER = {5},
     PAGES = {1245--1281},
      ISSN = {1937-0652,1944-7833},
   MRCLASS = {14H10 (14D22 14D23 14E30)},
  MRNUMBER = {4283103},
MRREVIEWER = {Scott\ R.\ Nollet},
       DOI = {10.2140/ant.2021.15.1245},
       URL = {https://doi.org/10.2140/ant.2021.15.1245},
}

@article {BDL4,
    AUTHOR = {Battistella, Luca and Di Lorenzo, Andrea},
     TITLE = {Wall-crossing integral chow rings of {$\oM_{1,n\leq4}$}},
   JOURNAL = {Forum Math. Sigma},
  FJOURNAL = {Forum of Mathematics. Sigma},
    VOLUME = {13},
      YEAR = {2025},
     PAGES = {Paper No. e196, 28},
      ISSN = {2050-5094},
   MRCLASS = {14H10 (14C15 14D23)},
  MRNUMBER = {4997998},
       DOI = {10.1017/fms.2025.10143},
       URL = {https://doi.org/10.1017/fms.2025.10143},
}

@article {Luna,
    AUTHOR = {Alper, Jarod and Hall, Jack and Rydh, David},
     TITLE = {A {L}una \'etale slice theorem for algebraic stacks},
   JOURNAL = {Ann. of Math. (2)},
  FJOURNAL = {Annals of Mathematics. Second Series},
    VOLUME = {191},
      YEAR = {2020},
    NUMBER = {3},
     PAGES = {675--738},
      ISSN = {0003-486X,1939-8980},
   MRCLASS = {14D23 (14A20 14F20 14L24 14L30)},
  MRNUMBER = {4088350},
MRREVIEWER = {Martin\ Ulirsch},
       DOI = {10.4007/annals.2020.191.3.1},
       URL = {https://doi.org/10.4007/annals.2020.191.3.1},
}

@book {Pinkham,
    AUTHOR = {Pinkham, Henry C.},
     TITLE = {Deformations of algebraic varieties with {$\Gm$} action},
    SERIES = {Ast\'erisque},
    VOLUME = {No. 20},
 PUBLISHER = {Soci\'et\'e{} Math\'ematique de France, Paris},
      YEAR = {1974},
     PAGES = {i+131},
   MRCLASS = {14D15 (14B05)},
  MRNUMBER = {376672},
MRREVIEWER = {O.\ Riemenschneider},
}

@article {AlperGood,
    AUTHOR = {Alper, Jarod},
     TITLE = {Good moduli spaces for {A}rtin stacks},
   JOURNAL = {Ann. Inst. Fourier (Grenoble)},
  FJOURNAL = {Universit\'e{} de Grenoble. Annales de l'Institut Fourier},
    VOLUME = {63},
      YEAR = {2013},
    NUMBER = {6},
     PAGES = {2349--2402},
      ISSN = {0373-0956,1777-5310},
   MRCLASS = {14D23 (14L24 14L30)},
  MRNUMBER = {3237451},
MRREVIEWER = {Arvid\ Siqveland},
       DOI = {10.5802/aif.2833},
       URL = {https://doi.org/10.5802/aif.2833},
}

@article {AHLH,
    AUTHOR = {Alper, Jarod and Halpern-Leistner, Daniel and Heinloth,
              Jochen},
     TITLE = {Existence of moduli spaces for algebraic stacks},
   JOURNAL = {Invent. Math.},
  FJOURNAL = {Inventiones Mathematicae},
    VOLUME = {234},
      YEAR = {2023},
    NUMBER = {3},
     PAGES = {949--1038},
      ISSN = {0020-9910,1432-1297},
   MRCLASS = {14D23 (14A20)},
  MRNUMBER = {4665776},
       DOI = {10.1007/s00222-023-01214-4},
       URL = {https://doi.org/10.1007/s00222-023-01214-4},
}

@article {ChenCoskun,
    AUTHOR = {Chen, Dawei and Coskun, Izzet},
     TITLE = {Extremal effective divisors on {$\overline{M}_{1,n}$}},
   JOURNAL = {Math. Ann.},
  FJOURNAL = {Mathematische Annalen},
    VOLUME = {359},
      YEAR = {2014},
    NUMBER = {3-4},
     PAGES = {891--908},
      ISSN = {0025-5831,1432-1807},
   MRCLASS = {14H10 (14C20 14E30 14L30)},
  MRNUMBER = {3231020},
MRREVIEWER = {Flaminio\ Flamini},
       DOI = {10.1007/s00208-014-1027-5},
       URL = {https://doi.org/10.1007/s00208-014-1027-5},
}

@misc{GoriThesis,
	Author = {Gori, D.},
	Note = {Ph{D} thesis, {S}apienza {U}niversit{\`a} di {R}oma},
	Title = {Alternative compactifications of $\mathrm{M}_{g,n}$ via cluster algebras and their birational geometry},
	Year = 2025
	}

@article {Molcho,
    AUTHOR = {Molcho, S.},
     TITLE = {Universal stacky semistable reduction},
   JOURNAL = {Isr. J. Math.},
  FJOURNAL = {Israel Journal of Mathematics},
    VOLUME = {},
      YEAR = {2021},
    NUMBER = {},
     PAGES = {},
      ISSN = {},
       DOI = {10.1007/s11856-021-2118-0}
}

@article {Smyth,
    AUTHOR = {Smyth, David Ishii},
     TITLE = {Modular compactifications of the space of pointed elliptic
              curves {I}},
   JOURNAL = {Compos. Math.},
  FJOURNAL = {Compositio Mathematica},
    VOLUME = {147},
      YEAR = {2011},
    NUMBER = {3},
     PAGES = {877--913},
      ISSN = {0010-437X},
   MRCLASS = {14D23 (14H10)},
  MRNUMBER = {2801404},
MRREVIEWER = {Concettina Galati},
       DOI = {10.1112/S0010437X10005014},
       URL = {https://doi.org/10.1112/S0010437X10005014},
}

@article{SmythII, title={Modular compactifications of the space of pointed elliptic curves II}, volume={147}, DOI={10.1112/S0010437X11005549}, number={6}, journal={Compositio Mathematica}, author={Smyth, David Ishii}, year={2011}, pages={1843–1884}}

@ARTICLE{ChenYu,
       author = {{Chen}, Dawei and {Yu}, Fei},
        title = "{Gorenstein singularities with $\mathbb{G}_m$-action and moduli spaces of holomorphic differentials}",
      journal = {arXiv e-prints},
         year = 2025,
        month = jul,
          eid = {arXiv:2507.09078},
        pages = {arXiv:2507.09078},
          doi = {10.48550/arXiv.2507.09078},
archivePrefix = {arXiv},
       eprint = {2507.09078},
 primaryClass = {math.AG},
       adsurl = {https://ui.adsabs.harvard.edu/abs/2025arXiv250709078C}
}

@ARTICLE{TasinViviani,
       author = {{Tasin}, Luca and {Viviani}, Filippo},
        title = "{The {H}assett-{K}eel program via $\Theta$-stability}",
      journal = {arXiv e-prints},
         year = 2026,
        month = aug,
          eid = {arXiv:2608.14099},
        pages = {arXiv:2608.14099},
          doi = {10.48550/arXiv.2608.14099},
archivePrefix = {arXiv},
       eprint = {2608.14099},
 primaryClass = {math.AG},
       adsurl = {https://ui.adsabs.harvard.edu/abs/2026arXiv260814099T}
}

@misc{stacks-project,
    shorthand    = {Stacks},
    author       = {The {Stacks Project Authors}},
    title        = {\textit{Stacks Project}},
    howpublished = {\url{https://stacks.math.columbia.edu}},
    year         = {2018},
  }

@misc{ReidFlip,
    author       = {Reid, M.},
    title        = {\textit{What is a flip?}},
    howpublished = {\url{https://mreid.warwick.ac.uk/3folds/what_flip.pdf}},
    year         = {1992},
  }

@article {Boz21,
    AUTHOR = {Bozlee, Sebastian},
     TITLE = {Contractions of subcurves of families of log curves},
   JOURNAL = {Comm. Algebra},
  FJOURNAL = {Communications in Algebra},
    VOLUME = {49},
      YEAR = {2021},
    NUMBER = {11},
     PAGES = {4616--4660},
      ISSN = {0092-7872},
   MRCLASS = {14H10 (14T20)},
  MRNUMBER = {4304662},
       DOI = {10.1080/00927872.2021.1925903},
       URL = {https://doi.org/10.1080/00927872.2021.1925903},
}

@article {BKN,
    AUTHOR = {Bozlee, Sebastian and Kuo, Bob and Neff, Adrian},
     TITLE = {A classification of modular compactifications of the space of
              pointed elliptic curves by {G}orenstein curves},
   JOURNAL = {Algebra Number Theory},
  FJOURNAL = {Algebra \& Number Theory},
    VOLUME = {17},
      YEAR = {2023},
    NUMBER = {1},
     PAGES = {127--163},
      ISSN = {1937-0652},
   MRCLASS = {14D23 (14H10)},
  MRNUMBER = {4549282},
       DOI = {10.2140/ant.2023.17.127},
       URL = {https://doi.org/10.2140/ant.2023.17.127},
}

@article {CavalieriChanUlirschWise,
    AUTHOR = {Cavalieri, R. and Chan, M. and Ulirsch, M. and
              Wise, J.},
     TITLE = {A moduli stack of tropical curves},
   JOURNAL = {Forum Math. Sigma},
  FJOURNAL = {Forum of Mathematics. Sigma},
    VOLUME = {8},
      YEAR = {2020},
     PAGES = {Paper No. e23, 93},
   MRCLASS = {14T20 (14D23)},
  MRNUMBER = {4091085},
MRREVIEWER = {Hannah Markwig},
       DOI = {10.1017/fms.2020.16},
       URL = {https://doi.org/10.1017/fms.2020.16},
}

@article {RSPW1,
    AUTHOR = {Ranganathan, Dhruv and Santos-Parker, Keli and Wise, Jonathan},
     TITLE = {Moduli of stable maps in genus one and logarithmic geometry,
              {I}},
   JOURNAL = {Geom. Topol.},
  FJOURNAL = {Geometry \& Topology},
    VOLUME = {23},
      YEAR = {2019},
    NUMBER = {7},
     PAGES = {3315--3366},
      ISSN = {1465-3060},
   MRCLASS = {14N35 (14D23)},
  MRNUMBER = {4046967},
       DOI = {10.2140/gt.2019.23.3315},
       URL = {https://doi.org/10.2140/gt.2019.23.3315},
}

@article{instability,
      title={{On the structure of instability in moduli theory}}, 
      author={Halpern-Leistner, D.},
      year={2022},
      eprint={1411.0627},
      archivePrefix={arXiv},
      primaryClass={math.AG},
      url={https://arxiv.org/abs/1411.0627},
      journal={arXiv},
}

@incollection {Stanley,
    AUTHOR = {Stanley, Richard P.},
     TITLE = {An introduction to hyperplane arrangements},
 BOOKTITLE = {Geometric combinatorics},
    SERIES = {IAS/Park City Math. Ser.},
    VOLUME = {13},
     PAGES = {389--496},
 PUBLISHER = {Amer. Math. Soc., Providence, RI},
      YEAR = {2007},
      ISBN = {978-0-8218-3736-8; 0-8218-3736-2},
   MRCLASS = {52C35 (05B35 55R80)},
  MRNUMBER = {2383131},
       DOI = {10.1090/pcms/013/08},
       URL = {https://doi.org/10.1090/pcms/013/08},
}

@article{BreVis,
      title={{An arithmetic valuative criterion for proper maps of tame algebraic stacks}}, 
      author={Bresciani, G. and Vistoli, A.},
      year={2024},
      journal={Manuscipta Math.},
      volume={173},
      pages={1061-1071},
      doi={https://doi.org/10.1007/s00229-023-01491-6}
}


\subsection*{Funding} L.B.: this material is partly based upon work supported by the National Science Foundation under Grant No. DMS-1928930, while I was in residence at the Simons Laufer Mathematical Sciences Institute in Berkeley, California, during the Spring 2026 semester. I also benefitted from a visit to the University of Washington. I am a member of the INdAM group GNSAGA.

\end{document}